\title{Lie structures in homotopy and isotopy calculi}
\date{\today}
\author{Danica Kosanovi\'c}
\address{ETH Z\"urich, Department of Mathematics, R\"amistrasse 101, 8092 Z\"urich, Switzerland}
\curraddr{Universit\"at Bern, Mathematisches Institut (MAI), Alpeneggstrasse 22, 3012 Bern, Switzerland}
\email{danica.kosanovic@unibe.ch}
\begin{document}
\begin{abstract}

    We establish compatibility of Lie structures that appear in homotopy calculus of functors and isotopy calculus of embeddings. On one hand, we give a new proof of the Johnson--Arone--Mahowald result describing the layers of the Goodwillie tower of the identity functor, and we directly compare the spectral Lie bracket with the classical Whitehead bracket on spaces. On the other hand, we geometrically define a bracket on the layers of the embedding calculus tower for embeddings of arcs. These results are unified through the same technical tool, a newly defined bracket on total homotopy fibres of collapsing cubes of wedge sums.

\end{abstract}

\maketitle

\section{Introduction}

Many parallels can be drawn between two sorts of calculus in topology: the homotopy calculus of functors of Goodwillie~\cite{GooIII}, and the embedding calculus of Weiss, Goodwillie, and Klein~\cite{Weiss,GW,GKmultiple,GKW}. It is likely that these parallels were driving forces in the development of these theories by Goodwillie.
The goal of this paper is to bring forward one of these analogies -- the Lie structure that appears in both -- and its geometric meanings. We explain in what sense the $n$-th stage of the corresponding Taylor tower encodes ``homotopical'' or ``isotopical'' nilpotence information up to degree $n$. By extracting and slightly improving known results we hope to point out some connections of interest both to homotopy and isotopy communities. 

 In what follows we state our main results, Theorem~\ref{mainthm:A} (about Lie structures in homotopy calculus), Theorem~\ref{mainthm:B} (about Lie structures in isotopy calculus), and Theorem~\ref{mainthm:C} (the bridge between them using Lie structures on total homotopy fibres). We outline related results in Remark~\ref{rem:related}.

\subsection{Homotopy calculus}
Let $\Top_*$ be the topological category (a category enriched in spaces) of based topological spaces. In homotopy calculus a functor $F\colon \Top_*\to \Top_*$ is approximated by a tower of functors, with the $n$-th stage $P_nF$ for $n\geq1$ given by the degree $\leq n$ polynomial functor ``closest'' to $F$. A functor is polynomial of degree $\leq n$ if it sends a strongly homotopy cocartesian $(n+1)$-cube to a homotopy cartesian $(n+1)$-cube. One can define $P_nF$ by applying to $F$ the homotopy left adjoint of the inclusion of $n$-polynomial into all functors.

For example, for the identity functor $I\colon \Top_*\to \Top_*$ one has 
\[
    P_1I(X)\simeq Q X\coloneqq \Omega^\infty\Sigma^\infty X.
\]
There are several explicit constructions of $P_nX\coloneqq P_nI(X)$, including that of Goodwillie~\cite{GooIII}, Eldred~\cite{Eldred}, and Arone--Kankaanrinta~\cite{AK-Snaith}.
The last one allows the authors to show that the limit of the Taylor tower of $I$ is the Bousfield--Kan $\Z$-nilpotent completion:
\[
    P_\infty(X)\simeq \Z_\infty X.
\]
This agrees with $X$ for nilpotent spaces $X$.
In summary, \emph{Goodwillie's homotopy Taylor tower} of the identity functor $I$ applied to the based space $X$ is the tower of spaces
\begin{equation}\label{eq:htpy-tower}
\begin{tikzcd}[column sep=12pt,row sep=small]
    & & & X\ar{llld}\dar\ar{dl}\ar{drr} &\\
     \Z_\infty X\simeq P_\infty(X)\rar & \cdots\rar & P_{n+1}X\rar & P_nX\rar &\cdots\rar & P_1X\simeq Q(X)
\end{tikzcd}
\end{equation}
interpolating from the stable homotopy of $X$ to the unstable homotopy of its nilpotent completion.

In fact, the connection of the homotopy Taylor tower to nilpotence is much deeper. Biederman and Dwyer~\cite{Biedermann-Dwyer} redefined some classical notions of homotopy nilpotence so that $\Omega P_n(X)$ is a \emph{homotopy nilpotent group} of class $n$. Related to this, Chorny and Scherer~\cite{Chorny-Scherer} showed that $(n+1)$-fold iterated Samelson brackets of classes in $\pi_*\Omega P_n(X)$ vanish (these are adjoint to Whitehead brackets on $\pi_*P_n(X)$). One defines the Samelson bracket of two maps $f_i\colon Z_i\to\Omega Y$ as the composite
\[
    Z_1\wedge Z_2\xrightarrow{f_1\wedge f_2} \Omega Y\wedge \Omega Y\xrightarrow{[i_1,i_2]} \Omega(Y\vee Y)\xrightarrow{\Omega(\mathrm{fold})} \Omega Y,
\]
where the second map is the \emph{universal Samelson bracket}, that takes $\gamma_b\in\Omega Y$ for $b=1,2$ and outputs the commutator loop of $i_1\circ\gamma_1$ and $i_2\circ\gamma_2$, where $i_b\colon Y\hra Y\vee Y$ are the two inclusions.

Iterating this, for a bracketed word $w$ on $n$ letters and maps $f_i\colon Z_i\to\Omega Y_i$ we obtain the iterated Samelson bracket $w(f_1,\dots,f_n)\colon w(Z_i)\to \Omega (\bigvee_{i=1}^nY_i)$, where $w(Z_i)\simeq Z_1\wedge\dots\wedge Z_n$ up to a permutation. Such words $w$ are elements of the $n$-th term $\Lie(n)$ of the Lie operad; see §\ref{subsec:Lie}.  In particular, if $x\colon X\to\Omega\Sigma X$ denotes the map adjoint to the identity and $S$ is a finite set, we have
\begin{equation}\label{eq-intro:can-Sam}
    w(x_i)\colon\; X^{\wedge S}\ra \Omega\big(\bigvee_S\Sigma X\big),
\end{equation}
the Samelson bracket of $|S|$ copies of $x$, that we call the \emph{canonical Samelson map}.

Therefore, the theorem of Chorny and Scherer says that in $\Omega P_n(X)$ ``iterated parameterised commutators of length $(n+1)$'' vanish, and is thus a nilpotence result for homotopy calculus.

This is in close relation with the fact that the layers $D_n(X)\coloneqq\hofib(P_nX\to P_{n-1}X)$ assemble into the \emph{free spectral Lie algebra} on the suspension spectrum of $X$, by work of Johnson~\cite{Johnson} and Ching~\cite{Ching}. Namely, as reviewed in §\ref{subsec:htpy-calc}, for an operad $\mathbf{s}\oLie=\partial_{\bull}I$ that lifts the (suspended) Lie operad to the category of spectra we have 
$D_n(X)= \Omega^\infty(\partial_nI\wedge \Sigma^\infty X^{\wedge n})_{h\Sigma_n}$. Our first main result makes explicit a connection of Samelson brackets and operadic Lie structure on the layers.

\begin{mainthm}
\label{mainthm:A}
    For any based space $X$, the structure of a spectral Lie algebra on the layers of the homotopy Taylor tower of the identity functor at $X$ induces brackets
\[
    [-,-]_{\oLie}\colon\; \Omega D_{n_1}(\Sigma X) \wedge \Omega D_{n_2}(\Sigma X)
    \ra \Omega D_{n_1+n_2}(\Sigma X),
\]
    that correspond to Samelson brackets: there are maps $w_{D_n}$ that extend $w(x_i)$ from \eqref{eq-intro:can-Sam},
    so that if $w=[w_1,w_2]$ with $w_b\in\Lie(S_b)$ for $S_1\sqcup S_2=\ul{n}$, then the following square commutes up to homotopy:
    \[\begin{tikzcd}[column sep=2cm]
        \Omega D_{|S_1|}(\Sigma X) \wedge \Omega D_{|S_2|}(\Sigma X)
            \rar{ [-,-]_{\oLie}}
        & \Omega D_{n}(\Sigma X)
        \\
         X^{\wedge S_1}
            \wedge
        X^{\wedge S_2}
            \rar{\cong}
            \uar{(w_1)_{D_{|S_1|}}\wedge (w_2)_{D_{|S_2|}}}
        & X^{\wedge \ul{n}}
            \uar[swap]{w_{D_{n}}}\,.
    \end{tikzcd}
    \]
\end{mainthm}
In Theorem~\ref{mainthm:C} below we give a more precise description of the maps $w_{D_n}$. This result is similar to \cite[Prop.4.28]{Heuts-annals}, which is proven in a different setting and by a different approach, based on~\cite{Brantner-Heuts}. However, we point out that Theorem~\ref{mainthm:C} is probably known to experts and perhaps can be extracted from the literature (or at least at the level of the homotopy category).

\subsection{Isotopy calculus}
Let now $Man_l$ be the topological category of smooth $l$-dimensional manifolds and codimension $0$ embeddings. In embedding calculus the presheaf $\Emb(-,M)\colon Man_l^{op}\to \Top$ of smooth embeddings for a fixed $d$-dimensional manifold $M$ and $1\leq l\leq d$, is approximated by a tower of presheaves, with the $n$-th stage $T_n(-,M)$ given by the degree $\leq n$ polynomial presheaf closest to $\Emb(-,M)$. This is defined as the right
Kan extension of $i^*\Emb(-,M)$, where $i\colon Disc_{l,\leq n}\to Man_l$ is the full subcategory on manifolds diffeomorphic to $S\tm\R^l$ for a finite set $S$ of cardinality at most $n$. This can be interpreted in terms of cubical diagrams as well~\cite{BW2,Arakawa}.

For example, the main result of immersion theory (an ``h-principle'') implies that the immersion functor sends homotopy pullback squares to homotopy pushout squares. This is precisely the first stage of the Taylor tower (when $d-l>0$):
\[
    T_1(C,M)\simeq\Imm(C,M).
\]
By the key result of Goodwillie and Klein~\cite{GKmultiple}, when $d-l>2$ the maps from $\Emb(C,M)$ to $T_n(C,M)$ become better connected with $n$, implying that
\[
    T_\infty(C,M)\simeq\Emb(C,M).
\]
There is also a version $T_n(C,M)$ for smooth embeddings $\Embp(C,M)$ of manifolds with boundary, with a fixed boundary condition $\partial C\hra\partial M$. In summary, for fixed manifolds $C$ and $M$ with $d-l>2$ the space $\Embp(C,M)$ has the associated \emph{Goodwillie--Weiss Taylor tower} of spaces
\begin{equation}\label{eq:istpy-tower}
\begin{tikzcd}[column sep=12pt,row sep=small]
    &&& \Embp(C,M)\dar{ev_n}\ar{ld}\ar{rrd}{ev_1=incl}\ar{llld}[swap]{\sim} &\\
    T_\infty(C,M)\rar & \cdots\rar & T_{n+1}(C,M)\rar & T_n(C,M)\rar & \cdots\rar & T_1(C,M)=\Imm(C,M)
\end{tikzcd}
\end{equation}
interpolating from immersions to embeddings. In analogy to the homotopy Taylor tower~\eqref{eq:htpy-tower}, the $n$-th stage of~\eqref{eq:istpy-tower} can be viewed as a ``degree $n$ nilpotent approximation in isotopical sense''. One way this can be formalised is as follows.

We consider the case $C=\D^1$. Then the layer $F_{n+1}(M)\coloneqq\hofib(T_{n+1}(\D^1,M)\to T_n(\D^1,M))$ is $(n(d-3)-1)$-connected and the first nontrivial homotopy group was computed in~\cite{K-thesis-paper} as
\[
    \pi_{n(d-3)}F_{n+1}(M)\cong \Lie(n)\otm\Z[\pi_1M^{\tm n}].
\]
In \cite{K-families} we lifted this to \emph{grasper classes} in the embedding space (see Theorem~\ref{thm:K-families}):
\[
    \realmap_n\colon\; \Lie(n)\otm\Z[\pi_1M^{\tm n}]\ra \pi_{n(d-3)}\Embp(\D^1,M)
\]
that vanish in $\pi_{n(d-3)}T_n(\D^1,M)$ and precisely come from $\pi_{n(d-3)}F_{n+1}(M)\cong\Lie(n)\otm\Z[\pi_1M^{\tm n}]$, under homomorphisms $\pi_{n(d-3)}F_{n+1}\to\pi_{n(d-3)}T_{n+1}\cong\pi_{n(d-3)}\Embp(\D^1,M)$.
These classes can be viewed as \emph{embedded iterated self-commutators} of $c\colon\D^1\hra M$, the basepoint arc. We refer to Theorem~\ref{thm:emb-comm} for a more precise comparison of Samelson brackets $w(x_i)$ with these embedded commutators. 
In this setting we have the following analogue of Theorem~\ref{mainthm:A}.
\begin{mainthm}
\label{mainthm:B}
    The layers of the isotopy Taylor tower of $\Embp(\D^1,M)$ for any $d$-dimensional manifold $M$ with boundary, $d\geq3$, admit brackets
    \[
        [-,-]_{\ext}\colon\; F_{n_1+1}(M) \wedge F_{n_2+1}(M)\ra F_{n_1+n_2+1}(M),
    \]
    that correspond to Samelson brackets: there are maps $w_{F_n}$ that extend $w(x_i)$ from \eqref{eq-intro:can-Sam}, so that if $w=[w_1,w_2]$ with $w_b\in\Lie(S_b)$ for $S_1\sqcup S_2=\ul{n}$, then the following square commutes up to homotopy:
    \[\begin{tikzcd}[column sep=small]
        F_{|S_1|+1}(M) \wedge F_{|S_2|+1}(M)
            \rar{[-,-]_{\ext}}
        & F_{n}(M)
        \\
        \Omega^n (\Sigma^{d-2}(\Omega M)_+)^{\wedge S_1}
            \wedge
        \Omega^n (\Sigma^{d-2}(\Omega M)_+)^{\wedge S_2}
            \rar{\cong}
            \uar{(w_1)_{F_{n_1}}\wedge (w_2)_{F_{n_2}}}
        & \Omega^n (\Sigma^{d-2}(\Omega M)_+)^{\wedge \ul{n}}
            \uar[swap]{w_{F_n}} \,.
    \end{tikzcd}
    \]
    Moreover, in $\pi_*(\Embp(\D^1,M);c)$ this corresponds to the bracket that takes two grasper classes to another one, namely, if $w_b\in\Lie(S_b)$ and $g_S\in\pi_1(M^S)$ then
    \[
        [\realmap_{n_1}(w_1, g_{\ul{n_1}}),\;
            \realmap_{n_2}(w_2,g_{\ul{n_2}})]
            =\realmap_n([w_1,w_2],g_{\ul{n}}).
    \]
\end{mainthm}

\subsection{A bridge}

The layers in homotopy calculus were described by Johnson~\cite{Johnson}, and reinterpreted by Arone--Kankaanrinta~\cite{AK-Snaith} and Arone--Mahowald~\cite{Arone-Mahowald} (see §\ref{subsec:htpy-calc}) in terms of partition complexes $\GPC_{\ul{n}}$ (see Definition~\ref{def:GPC}). In fact, in §\ref{subsec:J} we will use the description of spaces $\GPC_{\ul{n}}$ in terms of weighted trees to directly define a map  
    \[
        J\colon\tofib\left(Y_{\ul{n}}; \kappa\right)
        \ra \Map_*(\GPC_{\ul{n}}, \bigwedge_{i\in\ul{n}} Y_i),
    \]
which was known to exist only indirectly. Here the source is the total homotopy fibre of a cubical diagram, with spaces $\bigvee_{i\in S}Y_i$ and maps $\kappa_{S\sm\{k\}}^S\colon\bigvee_{i\in S}Y_i\to \bigvee_{i\in S\sm\{k\}}Y_i$ given by collapsing a factor. In the key Theorem~\ref{thm:J} we will show that this map $J$ sends external brackets on total homotopy fibres to certain grafting brackets on $\Map_*(\GPC_{\ul{n}},-)$ (coming from the cooperadic structure of $\GPC_{\bull}$).

This will allow us to show in Theorem~\ref{thm:J-collapsing} that 
in the case when each $Y_i$ is a suspension, the map $J$ is in a sense dual to canonical Samelson maps $w(x_i)$. This together with the Hilton--Milnor theorem implies that $J$ is highly connected (see Corollary~\ref{cor:range}). We also show a generalisation: the analogous result for collapsing cubes where $Y_0$ might not be a suspension (see Theorem~\ref{thm:J-general}).

We summarise most of this in the following result, from which both Theorems~\ref{mainthm:A} and~\ref{mainthm:B} are deduced in §\ref{sec:proofs} in a straightforward manner.

\begin{mainthm}
\label{mainthm:C}
    For any based space $X$ and any $w=[w_1,w_2]\in\Lie(n)$ with $w_b\in\Lie(S_b)$ for $S_1\sqcup S_2=\ul{n}$, the following diagram commutes up to homotopy
    \[\begin{tikzcd}[column sep=large]
        D_{|S_1|}(\Omega\Sigma)(X) \wedge D_{|S_2|}(\Omega\Sigma)(X)
            \rar{[-,-]_{\oLie}}
        & D_n(\Omega\Sigma)(X)
        &[-10pt]\\
        \Omega \Map_*(\GPC_{S_1},(\Sigma X)^{\wedge S_1})\wedge \Omega \Map_*(\GPC_{S_2},(\Sigma X)^{\wedge S_1})
            \rar{[-,-]_{\graft}}
            \uar{\Sigma^\infty()_{D_{S_1}}\wedge\Sigma^\infty()_{D_{S_1}}}
        & \Omega \Map_*(\GPC_{\ul{n}}, (\Sigma X)^{\wedge n})
            \uar[swap]{\Sigma^\infty()_{D_{\ul{n}}}}
        &[-10pt]
        \\
        \Omega \tofib(\vee_{S_1}\Sigma X;\kappa)\wedge \Omega \tofib(\vee_{S_2}\Sigma X;\kappa)
            \rar{[-,-]_{\ext}}
            \uar{\Omega J_{S_1}\wedge \Omega J_{S_2}}
        & \Omega \tofib(\vee_{n}\Sigma X;\kappa)
            \uar[swap]{\Omega J_{\ul{n}}}
            \rar{\proj}
        &[-10pt] \Omega\vee_n \Sigma X
        \\
         X^{\wedge S_1}
            \wedge
        X^{\wedge S_2}
            \rar{\cong}
            \uar{w_1(x_i)^\bull\wedge w_2(x_i)^\bull}
        & X^{\wedge \ul{n}}
            \uar[swap]{w(x_i)^\bull}
            \ar{ur}[swap]{w(x_i)}
        &[-10pt]
    \end{tikzcd}
    \]
    In words, there exist \emph{total Samelson maps} $w(x_i)^\bull$ that, on one hand, extend $w(x_i)$ to total homotopy fibres of collapsing cubes, and on the other hand, are mutually compatible under certain external Samelson brackets $[-,-]_{\ext}$. Moreover, the natural brackets  $[-,-]_{\graft}$  on mapping spaces out of partition complexes are on one hand compatible with $[-,-]_{\ext}$ (under the Johnson map $J$) , and on the other hand with the operadic Lie bracket (under stabilisation and homotopy orbits).
\end{mainthm}

\begin{rem}\label{rem:related}
We mention further related results. Malin~\cite{Malin-Koszul-self-manifolds} has recently proved Ching's conjecture~\cite{Ching-skye}, that predicted an equivalence between two Lie structures on the stabilised configuration spaces $\Sigma^\infty\Conf_n(M)$ for a framed manifold $M$. One is the spectral Lie structure coming from homotopy calculus (on the derivatives $((M^+)^{\wedge n}/\mathrm{fat diagonal})^\vee\simeq\Sigma^{-dn}\Sigma^{\infty}\Conf_n(M)$ of the functor $\Sigma^\infty\Map_*(M^+,-)$), and the other one is the restriction of the right $E_d$-structure from embedding calculus (using Koszul self-duality of $E_d$).

Moreover, Munson~\cite{Munson} compared Johnson's map to Koschorke's invariants in the setting of link maps. Closely related is the work of Klein and Williams~\cite{Klein-Williams-III} on links and link maps.

Total homotopy fibres of collapsing cubes were also studied by Rognes~\cite{Rognes}. When all factors are spheres, we recover and correct \cite[Cor.13.9]{Rognes}, that says that the first nonvanishing homotopy group admits a $\Sigma_n$-equivariant isomorphism
    \[
        \pi_{n(d-2)}\Omega\tofib\big(\bigvee_{\ul{n}}\S^{d-1}; \kappa\big)
        \cong\Lie_{d-2}(n),
    \]
for $\Lie_{d-2}(n)\cong\Lie(n)\otm(\sgn_n)^{\otm d-2}$, the sign-twisted Lie representation.
\end{rem}

\subsection*{Outline}
    In §\ref{sec:background} we mainly review background. 
    We recall Lie trees and partition complexes in §\ref{subsec:Lie}, Goodwillie partition complexes and grafting brackets in §\ref{subsec:grafting}, and Samelson brackets in §\ref{subsec:Sam-brackets}.

    In §\ref{sec:cubes-ext} we discuss cubes. 
    After briefly introducing them and their total homotopy fibres in §\ref{subsec:cubes}, 
    in §\ref{subsec:ext-Sam-brackets} we present new material: we define external Samelson brackets, and in §\ref{subsec:examples} we discuss some examples. We then use these brackets to define total Samelson maps in §\ref{subsec:total-Sam}.

    In §\ref{sec:improve} we discuss our core results.
    In §\ref{subsec:J} we define the map $J$ and prove the key Theorem~\ref{thm:J}. In §\ref{subsec:J-collapsing} we prove Theorem~\ref{mainthm:C}, except for the commutativity of the top square. In §\ref{subsec:J-general} we prove a generalisation, for collapsing cubes of not only suspensions.

    In §\ref{sec:proofs} we derive our applications.
    In §\ref{subsec:htpy-calc} we explain how this relates to homotopy calculus and prove the rest of Theorem~\ref{mainthm:C}, that then clearly implies Theorem~\ref{mainthm:A}.
    In §\ref{subsec:istpy-calc} we explain how this relates to isotopy calculus for arcs and prove Theorem~\ref{mainthm:B}.

\subsection*{Acknowledgements}
    Many thanks to the referees for the valuable feedback.
    This work was partially supported by the Science Fund of the Republic of Serbia (grant number 7749891).

\section{Background}\label{sec:background}

\subsection*{Warning}

We will use a non-standard definition of the smash product.

\begin{defn}\label{def:smash}
    We define the \emph{smash product} of spaces $Z_1$ and $Z_2$ as the mapping cone of the inclusion $i\colon Z_1\vee Z_2\hra Z_1\tm Z_2$. That is, $Z_1\wedge Z_2\coloneqq Z_1\times Z_2\cup_i Cone(Z_1\vee Z_2)$.
\end{defn}

This is in contrast to the standard definition of the smash product as the quotient $Z_1\tm Z_2/ Z_1\vee Z_2$. However, these spaces are clearly homotopy equivalent (ours is the homotopy cofibre of $i$, which is homotopy equivalent to the cofibre, as $i$ is a cofibration), but in our constructions the mapping cone definition is more convenient. Namely, often we will have a map $f$ out of $Z_1\tm Z_2$ that is not constant on $Z_1\vee Z_2$, but is nullhomotopic on it. A choice of a nullhomotopy will precisely correspond to a factorisation of this map $f$ through $q\colon Z_1\tm Z_2\to Z_1\wedge Z_2$.

\subsection{Trees}
\label{subsec:Lie}

We discuss Lie trees and partition complexes.

\subsubsection{Lie trees}

Let $\LL(n)$ be the free Lie ring over $\Z$ on $n$ letters $x^1,\dots,x^n$. This is the free abelian group generated by \emph{bracketed words}, modulo the antisymmetry and Jacobi relations:
\begin{equation}\label{eq:L-as-ihx}
    [w,v]+[v,w]=0,\qquad [v,[w,z]]-[[v,w],z]-[w,[v,z]]=0.
\end{equation}
The free Lie algebra $\LL(n)\otm\Q$ has a basis given by the set $B\LL(n)$ of Lyndon words on $n$ letters. We still have an inclusion $B\LL(n)\subset \LL(n)$, but the 2-torsion elements $[v,v]$ are not in the image. 

Define $\Lie(n)\subset \LL(n)$ as the subgroup spanned by all bracketed words that contain each letter $x^i$ exactly once. 
Note that such words correspond to \emph{Lie trees} with: ($\diamond$) $n+1$ univalent vertices together with labels that are in bijection with $[n]\coloneqq\{0,1,\dots,n\}$, and ($\diamond$) $n-1$ trivalent vertices together with a cyclic order of incident edges. From such a tree $\Gamma$ we read off a bracketed word $w_\Gamma$ inductively: if univalent vertices of $\Gamma$ are only $\{0,i\}$ then $w_\Gamma\coloneqq x^i$, and otherwise $0$ in $\Gamma$ is adjacent to a trivalent vertex, at which two ordered daughter trees meet, and we let $w_\Gamma=[w_{\Gamma_1},w_{\Gamma_2}]$. 

Therefore, we can think of $w\in\Lie(n)$ as a bracketed word and a Lie tree interchangeably, and $\Lie(n)$ is equivalently described as the quotient of the free abelian group $\Z[\Tree(n)]$, modulo the relations that correspond to \eqref{eq:L-as-ihx}, locally given by
\[
    \begin{tikzpicture}[baseline=2ex,scale=0.3,every node/.style={scale=0.8}]
            \clip (-1.7,-0.6) rectangle (1.7,3.6);
            \draw
                (0,-0.3) node[]{$\vdots$};
            \draw[thick]
                (0,0) -- (0,1) -- (-1.25,2.25) node[pos=1,above]{$\Gamma_2$}
                        (0,1) -- (1.25,2.25)  node[pos=1,above]{$\Gamma_1$};
    \end{tikzpicture} 
    + 
    \begin{tikzpicture}[baseline=2ex,scale=0.3,every node/.style={scale=0.8}]
            \clip (-1.7,-0.6) rectangle (1.7,3.6);
            \draw (0,-0.3) node[]{$\vdots$};
            \draw[thick]
                (0,0) -- (0,1) -- (-1.25,2.25) node[pos=1,above]{$\Gamma_1$}
                        (0,1) -- (1.25,2.25)   node[pos=1,above]{$\Gamma_2$};
    \end{tikzpicture} 
    =\: 0,\quad\quad
    \begin{tikzpicture}[baseline=2ex,scale=0.25,every node/.style={scale=0.8}]
            \clip (-2.5,-0.6) rectangle (2.5,4.45);
            \draw (0,-0.3) node[]{$\vdots$};
            \draw[thick]
                (0,0) -- (0,1) -- (-1,2) -- (-2,3) node[pos=1,above]{$\Gamma_3$}
                                  (-1,2) -- (0,3) node[pos=1,above]{$\Gamma_2$}
                        (0,1) -- (2,3)  node[pos=1,above]{$\Gamma_1$};
    \end{tikzpicture} 
    -
    \begin{tikzpicture}[baseline=2ex,scale=0.25,every node/.style={scale=0.8}]
            \clip (-2.5,-0.6) rectangle (2.5,4.45);
            \draw (0,-0.3) node[]{$\vdots$};
            \draw[thick]
                (0,0) -- (0,1) -- (-1,2) -- (-2,3) node[pos=1,above]{$\Gamma_3$}
                        (1,2) -- (0,3) node[pos=1,above]{$\Gamma_2$}
                        (0,1) -- (2,3)  node[pos=1,above]{$\Gamma_1$};
    \end{tikzpicture} 
    + 
    \begin{tikzpicture}[baseline=2ex,scale=0.25,every node/.style={scale=0.8}]
            \clip (-2.5,-0.6) rectangle (2.5,4.45);
            \draw (0,-0.3) node[]{$\vdots$};
            \draw[thick]
                (0,0) -- (0,1) -- (-1,2) -- (-2,3) node[pos=1,above]{$\Gamma_1$}
                                  (-1,2) -- (0,3) node[pos=1,above]{$\Gamma_3$}
                        (0,1) -- (2,3)  node[pos=1,above]{$\Gamma_2$};
    \end{tikzpicture} 
    =\: 0.
\]
The subset $B\Lie(n)\coloneqq B\LL(n)\cap \Lie(n)$ consists of right-normed words 
\begin{equation}\label{eq:right-normed}
    w_\sigma\coloneqq[x^{\sigma(1)},\dots,[x^{\sigma(2)},[x^{\sigma(n-1)},x^n]]]
\end{equation}
for $\sigma\in\Sigma_{n-1}$, which form a basis for $\Lie(n)$, so $\Lie(n)\cong\Z^{(n-1)!}$.
Note that $\Sigma_n$ acts on $\Lie(n)$ by permuting $x^i$'s, and this restricts to the regular $\Z$-representation of $\Sigma_{n-1}$. However, the $\Sigma_n$-representation $\Lie(n)$ is more complicated; it is the arity $n$ of the \emph{Lie operad} in abelian groups, whose algebras are Lie rings over $\Z$.

We will also need a variation of this in graded world (see for example Corollary~\ref{cor:ex-Lie}). Let each $x^i$ have degree $|x^i|=D$ for some $D\geq0$, and let $\LL_D(\ul{n})$ be the free abelian group generated by bracketed words, modulo the \emph{graded} antisymmetry $[w,v]+(-1)^{|v||w|}[v,w]=0$ and Jacobi relations $[v,[w,z]]-[[v,w],z]- (-1)^{|v||w|}[w,[v,z]]=0$. Define $\Lie_D(n)\subset\LL_D(\ul{n})$ as the n-linear part.
\begin{lem}[\cite{Conant,Robinson}]
\label{lem:Lie-d(n)}
    If $\sgn_n=\Z$ denotes the sign representation of $\Sigma_n$, then there are $\Sigma_n$-isomorphisms $\Lie(n)\to\Lie_D(n)$ for $D$ even and $\Lie(n)\otm\sgn_n\to\Lie_D(n)$ for $D$ odd. In short,
    \[
        \Lie_D(n)\cong \Lie(n)\otm\sgn_n^{\otm D}.
    \]
\end{lem}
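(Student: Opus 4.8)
The plan is to reduce to the cases $D\in\{0,1\}$ and, for $D=1$, to exhibit the isomorphism by multiplying each Lie tree by a sign read off from its planar structure.

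First I would observe that the subgroup of relations $R_D\subseteq\Z[\Tree(n)]$, for which $\Lie_D(n)=\Z[\Tree(n)]/R_D$, depends on $D$ only through its parity: a bracketed sub-word containing $k$ of the letters has degree $kD$, so the Koszul signs $(-1)^{|v||w|}$ appearing in the graded antisymmetry and Jacobi relations depend only on $|v|,|w|\bmod 2$, and since $kD$ and $k(D+2)$ have the same parity we get $R_{D+2}=R_D$, compatibly with the $\Sigma_n$-action (which is by relabelling leaves, independently of $D$). Hence for $D$ even the graded relations coincide with the ungraded ones and $\Lie_D(n)=\Lie(n)$ on the nose, so it remains to construct a $\Sigma_n$-isomorphism $\Lie(n)\otm\sgn_n\to\Lie_1(n)$.

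To build it I would, for a Lie tree $\Gamma$, use the ribbon structure coming from the cyclic orders at its trivalent vertices together with the distinguished leaf $0$ to obtain a linear order on the leaves $1,\dots,n$; writing $\pi_\Gamma\in\Sigma_n$ for the resulting permutation, I would set $\phi(\Gamma)\coloneqq\sgn(\pi_\Gamma)\,\Gamma$ and extend linearly to an involution $\phi$ of $\Z[\Tree(n)]$. Relabelling by $\sigma$ replaces $\pi_\Gamma$ by $\sigma\circ\pi_\Gamma$, so $\phi(\sigma\cdot\Gamma)=\sgn(\sigma)\,\sigma\cdot\phi(\Gamma)$. The crucial claim is that $\phi$ carries $R_0$ into $R_1$ and $R_1$ into $R_0$; granting it, $\phi$ descends to maps $\Lie(n)\to\Lie_1(n)$ and $\Lie_1(n)\to\Lie(n)$ that are mutually inverse (since $\phi^2=\mathrm{id}$), and the equivariance above upgrades the first to an isomorphism $\Lie(n)\otm\sgn_n\to\Lie_1(n)$ of $\Sigma_n$-modules --- exactly the $D=1$ case, and with it the lemma.

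The hard part will be verifying this claim, which I would do generator by generator. A local antisymmetry generator swaps the daughter subtrees $\Gamma_1,\Gamma_2$ at a trivalent vertex; their leaves form two adjacent blocks, of sizes $k_1,k_2$, in the planar reading, which the swap transposes, so $\sgn(\pi_\Gamma)$ changes by $(-1)^{k_1k_2}$. Since the sub-bracket there reads $[u_1,u_2]$ with $|u_i|\equiv k_i\pmod 2$ when $D=1$, this is precisely the Koszul sign of $[u_1,u_2]+(-1)^{|u_1||u_2|}[u_2,u_1]=0$, so $\phi$ sends the ungraded generator $\Gamma+\Gamma'$ to $\pm\big(\Gamma+(-1)^{k_1k_2}\Gamma'\big)\in R_1$, and symmetrically $R_1$ into $R_0$. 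For a local Jacobi (IHX) generator $[a,[b,c]]-[[a,b],c]-[b,[a,c]]$, with $a,b,c$ the three subtrees meeting at the two relevant trivalent vertices, re-association does not change the planar order of the three leaf-blocks, whereas passing to $[b,[a,c]]$ transposes the blocks of $a$ and $b$, changing $\sgn(\pi)$ by $(-1)^{|a||b|}$ --- exactly the Koszul sign in the $D=1$ Jacobi relation; so $\phi$ matches the ungraded and graded Jacobi generators up to a common sign, in both directions. The same count at any deeper vertex is identical, since there too the leaves involved form a contiguous block subdivided into sub-blocks of the appropriate sizes. The genuine difficulty in all this is purely bookkeeping: fixing conventions (the orientation of the cyclic orders, which daughter is the first bracket argument) and confirming that under each elementary tree move $\sgn(\pi_\Gamma)$ changes exactly by the Koszul sign dictated by the graded relations; the Jacobi/IHX case is the most delicate, but it is a finite check. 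One can also repackage the same computation operadically --- $\Lie_D$ is the $D$-fold operadic (de)suspension of the Lie operad, and $(\Lambda^D\mathcal P)(n)=\mathcal P(n)\otm\sgn_n^{\otm D}$ once the (here irrelevant) internal degree shift is discarded --- recovering \cite{Conant,Robinson}.
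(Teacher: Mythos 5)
Your proposal is correct. Note that the paper does not prove this lemma at all --- it is quoted with a citation to Conant and Robinson --- so there is no internal proof to compare against; your argument supplies the standard one found in those references, namely the parity reduction $R_{D+2}=R_D$ followed by the involution $\phi(\Gamma)=\sgn(\pi_\Gamma)\,\Gamma$ that rescales each planar Lie tree by the sign of the permutation its planar leaf-order induces (equivalently, the operadic suspension $\Lie_D\cong\Lambda^D\Lie$). Your sign bookkeeping is the right one: descendants of an internal vertex of a planar tree form a contiguous block of leaves, so swapping the two daughter subtrees changes $\sgn(\pi_\Gamma)$ by $(-1)^{k_1k_2}$, which for $D=1$ is exactly the Koszul sign $(-1)^{|u_1||u_2|}$ since $|u_i|=k_iD$; and in the Jacobi generator the terms $[a,[b,c]]$ and $[[a,b],c]$ have the same planar order $a,b,c$ while $[b,[a,c]]$ differs by the block transposition of $a$ and $b$, again matching $(-1)^{|a||b|}$. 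Together with $\pi_{\sigma\cdot\Gamma}=\sigma\circ\pi_\Gamma$ this shows $\phi$ swaps $R_0$ and $R_1$ and induces the $\Sigma_n$-isomorphism $\Lie(n)\otm\sgn_n\to\Lie_1(n)$. The only step you use silently is the identification $\Lie_D(n)\cong\Z[\Tree(n)]/R_D$, i.e.\ that the multilinear part of the free graded Lie ring is the span of multilinear words modulo the multilinear instances of graded antisymmetry and Jacobi; this is the graded analogue of what the paper asserts for $D=0$ and holds because the defining relations preserve multidegree, but one sentence saying so would close the gap.
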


\subsubsection{Partition complexes}

Let $\Pi_{\ul{n}}$ be the poset of all partitions of $\ul{n}\coloneqq\{1,\dots,n\}$ excluding the minimal (discrete) and maximal (trivial) partitions. The geometric realisation of its nerve is called the \emph{partition complex} $|\Pi_{\ul{n}}|=|N\Pi_{\ul{n}}|$. This has an action of $\Sigma_n$ induced from the action on $\ul{n}$.

\begin{thm}[\cite{Barcelo,AK-Lie,Wachs,Robinson}]
\label{thm:Robinson}
    There is a non-equivariant homotopy equivalence
    \begin{equation}\label{eq:Pin-nonequiv}
        |\Pi_{\ul{n}}|\simeq\bigvee_{(n-1)!} \S^{n-3}
    \end{equation}
    On the other hand, there is an isomorphism of $\Sigma_n$-representations 
    $\wt{H}^{n-3}(|\Pi_{\ul{n}}|;\Z)\cong \Lie(n)\otm\sgn_n$.
\end{thm}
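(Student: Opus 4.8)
The plan is to treat the two assertions separately: first the bare homotopy type of $|\Pi_{\ul n}|$, then the $\Sigma_n$-action on its top cohomology.

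\emph{The non-equivariant equivalence.} Here I would invoke the combinatorial topology of the partition lattice. Write $\overline{\Pi}_n$ for the lattice of set partitions of $\ul n$ with its minimum $\hat 0$ (discrete partition) and maximum $\hat 1$ (trivial partition) removed, so that $|\Pi_{\ul n}|$ is exactly the order complex $\Delta(\overline{\Pi}_n)$. A maximal chain in $\overline{\Pi}_n$ consists of the $n-2$ partitions having $2,3,\dots,n-1$ blocks, so $\Delta(\overline{\Pi}_n)$ is pure of dimension $n-3$. Bj\"orner's edge-labelling of $\overline{\Pi}_n$ --- label a covering relation that merges two blocks by the larger of the two minimal elements of those blocks --- is an EL-shelling; hence $\Delta(\overline{\Pi}_n)$ is homotopy equivalent to a wedge of $(n-3)$-spheres, one for each maximal chain whose label word is strictly decreasing. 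Counting these ``falling chains'' (equivalently, using the standard exponential generating function for the partition lattice, or a deletion recursion) yields $(n-1)!$ of them, giving \eqref{eq:Pin-nonequiv}. Equivalent routes are a discrete Morse matching on $\Delta(\overline{\Pi}_n)$ with $(n-1)!$ critical $(n-3)$-cells, or the Goresky--MacPherson formula applied to the complexified braid arrangement. In particular $\wt H_*(|\Pi_{\ul n}|;\Z)$ is free of rank $(n-1)!$, concentrated in degree $n-3$, and by universal coefficients so is $\wt H^{n-3}$.

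\emph{The $\Sigma_n$-representation.} It remains to identify $M_n\coloneqq\wt H^{n-3}(|\Pi_{\ul n}|;\Z)$ with $\Lie(n)\otm\sgn_n$ as a $\Sigma_n$-module, and I would do this at the chain level. The top simplices of $\Delta(\overline{\Pi}_n)$ are the maximal chains $\hat 0<\pi_1<\cdots<\pi_{n-2}<\hat 1$, and each such chain records the ``merge history'' of a binary rooted tree with $n$ leaves labelled by $\ul n$ together with a compatible linear order on its $n-1$ internal vertices; deleting $\pi_i$ from a chain corresponds to forgetting the $i$-th merge, i.e.\ contracting an internal edge. Read through this dictionary, the simplicial boundary map produces on the top homology precisely the antisymmetry relation (from transposing the two daughter subtrees at a trivalent vertex) and the Jacobi relation (from the rank-two intervals of the form $\cong\Pi_{\ul 3}$, whose order complex is three points), so that the presentation of $\Lie(n)$ as a quotient of $\Z[\Tree(n)]$ from §\ref{subsec:Lie} is recovered. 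Matching the generating Lie trees with the falling maximal chains from the shelling gives an isomorphism $\Lie(n)\xrightarrow{\ \cong\ }M_n$ that is equivariant for $\Sigma_{n-1}$ (fixing the leaf $n$, say) on the nose, and for all of $\Sigma_n$ up to a twist by $\sgn_n$: this twist is the familiar phenomenon that $|\Pi_{\ul n}|$ is, up to a suspension, a model for the arity-$n$ part of the bar construction of the reduced commutative operad, whose homology realises the Lie cooperad up to one operadic suspension --- and operadic suspension in arity $n$ tensors the $\Sigma_n$-action with $\sgn_n$, exactly the shift recorded in Lemma~\ref{lem:Lie-d(n)}. The remaining passage from $\wt H_{n-3}$ to $\wt H^{n-3}$ is harmless because $\Lie(n)$ is $\Sigma_n$-self-dual over $\Z$ (e.g.\ via Klyachko's $\Lie(n)\cong\mathrm{Ind}_{C_n}^{\Sigma_n}(\chi)$ and the automorphism $c\mapsto c^{-1}$, realised inside the normaliser). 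Alternatively, one may skip the chain-level bookkeeping: compute the $\Sigma_n$-character of $\wt H_*(|\Pi_{\ul n}|)$ by the Hanlon--Stanley formula and compare it with the character of $\Lie(n)\otm\sgn_n$ coming from Klyachko's theorem.

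\emph{Main obstacle.} The delicate point is the equivariance of the comparison isomorphism, specifically pinning down the single copy of $\sgn_n$: an isomorphism respecting the full $\Sigma_n$-action hinges on orienting the top simplices of $\Delta(\overline{\Pi}_n)$ consistently with a choice of ordering of the internal vertices of trees, and published accounts famously differ by signs at exactly this juncture. I would therefore fix one convention for orienting trees and for the chain--tree correspondence at the outset, rederive the chain-complex presentation of $M_n$ in those terms, and then verify $\Sigma_n$-equivariance on the explicit basis produced by the shelling. Everything else --- shellability, the rank count $(n-1)!$, and concentration of (co)homology in a single degree --- is standard and requires no new ideas.
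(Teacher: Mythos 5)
Your non-equivariant argument is fine: EL-shellability of the partition lattice plus the count of $(n-1)!$ falling chains does give \eqref{eq:Pin-nonequiv}, and it simply takes a different route from the paper, which instead uses Robinson's weighted-tree model of $|\Pi_{\ul{n}}|$ and contracts the complement of the $(n-3)$-simplices indexed by right-normed Lie trees. The problems are in the equivariant half. First, your chain-level dictionary is aimed at a \emph{quotient} presentation (trees modulo antisymmetry and Jacobi), but the top \emph{homology} $\wt{H}_{n-3}$ is a subgroup of the top chain group (it is $\ker\partial$), not a quotient of it; it is the top \emph{cohomology} $\wt{H}^{n-3}=C^{n-3}/\operatorname{im}\delta$ that is naturally presented by generators and relations of exactly this shape. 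So the argument, done correctly, identifies $\wt{H}^{n-3}$ directly, and your detour through $\wt{H}_{n-3}$ is both unnecessary and, as written, not what the boundary map gives you.

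Second, and more seriously, the step you call ``harmless'' --- passing between $\wt{H}_{n-3}$ and $\wt{H}^{n-3}$ using the claim that $\Lie(n)$ is $\Sigma_n$-self-dual over $\Z$ --- is false integrally. Klyachko's theorem is a characteristic-zero statement and only yields self-duality of $\Lie(n)\otm\Q$. Over $\Z$ the module and its dual differ in general: already for $n=3$, writing $u=[x^1,[x^2,x^3]]$ and $v=[x^2,[x^1,x^3]]$ for the right-normed basis, the element $u+v$ is a nonzero $\Sigma_3$-invariant in $\Lie(3)\otm\F_3$ while the coinvariants of $\Lie(3)\otm\F_3$ vanish, so $\Lie(3)$ and its $\Z$-dual are not isomorphic as $\Z\Sigma_3$-modules. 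For the same reason your fallback --- comparing the Hanlon--Stanley character of $\wt{H}_*$ with the character of $\Lie(n)\otm\sgn_n$ from Klyachko --- only proves the isomorphism after tensoring with $\Q$, whereas the theorem asserts an isomorphism of integral $\Sigma_n$-representations. The paper avoids this entirely in the way you should: following Robinson, one works with cohomology from the start and defines an explicit $\Sigma_n$-equivariant homomorphism $\wt{H}^{n-3}(|\Pi_{\ul{n}}|;\Z)\to\Lie(n)\otm\sgn_n$ sending the cocycle basis $c_\sigma$ (dual to the right-normed top simplices produced by your shelling) to the words $w_\sigma$; since it takes a basis to a basis it is an isomorphism, and no duality property of $\Lie(n)$ is ever invoked.
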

\begin{proof}[Proof Idea]
    A point in $|\Pi_{\ul{n}}|$ is described (non-uniquely) by a pair 
\begin{equation}\label{eq:Pi-n-pt}
    (\wh{0}<b_0\leq b_1\leq\dots\leq b_k<\wh{1},\;
    0\leq t_1\leq t_2\leq\dots\leq t_k\leq1)
\end{equation}
    of a chain of partitions $b_i\in\Pi_{\ul{n}}$ and a point in the $k$-simplex $\Delta^k$. 
    Using this one can show that $|\Pi_{\ul{n}}|$ is homeomorphic to the space of \emph{fully grown weighted trees} with: ($\diamond$) $n+1$ univalent vertices labelled by $[n]$, ($\diamond$) some vertices of valence $\geq3$, and ($\diamond$) a positive length $0<l_e\leq1$ of each internal edge $e$, so that $l_e=1$ for at least one edge. See \cite[Prop.2.7]{Robinson} for a proof; roughly, to obtain a point as in \eqref{eq:Pi-n-pt} we first rescale a fully grown weighted tree to have total height one, and then determine at which heights $t_i$ the internal vertices occur, and the partition of the leaves above that height. 

    It is not hard to show that $N\Pi_{\ul{n}}$ is a simplicial set of dimension $n-3$, with top simplices corresponding to fully grown weighted \emph{Lie} trees. Let $L$ denote the union of the interiors of those $(n-3)$-simplices that correspond to right-normed Lie trees $w_\sigma$ from~\eqref{eq:right-normed}. There is $(n-1)!$ many of them, and one can show that $N\Pi_{\ul{n}}\sm L$ is contractible. Therefore, \eqref{eq:Pin-nonequiv} holds. Note that it forgets the natural $\Sigma_n$-action on $|\Pi_{\ul{n}}|$, and just remembers the regular $\Sigma_{n-1}$-action.

    In particular, it follows that a basis of $\wt{H}^{n-3}(|\Pi_{\ul{n}}|;\Z)$ is given by cocycles $c_\sigma$ defined by $c_\sigma(w_\sigma)=1$ and zero on all other $(n-3)$-simplices.
    In order to determine the $\Sigma_n$-action on $\wt{H}^{n-3}(|\Pi_{\ul{n}}|;\Z)$ Robinson~\cite[Sec.3]{Robinson} directly defines a homomorphism $\wt{H}^{n-3}(|\Pi_{\ul{n}}|;\Z)\to\Lie(n)\otm\sgn_n$ which sends $c_\sigma$ to $w_\sigma$ and is $\Sigma_n$-equivariant. This sends a basis to a basis, so has to be an isomorphism.
\end{proof}

\subsection{Grafting}
\label{subsec:grafting}

In this section we recall the definition of Goodwillie partition complexes $\GPC_{\ul{n}}$ and the ungrafting map of trees. This not only defines a cooperadic structure on these spaces, but we can also use this map to define grafting brackets $[-,-]_{\graft}$ on mapping spaces out of $\GPC_{\ul{n}}$.

\subsubsection{Goodwillie partition complexes}
We choose this name for the following well-studied object.

\begin{defn}\label{def:GPC}
    Define the \emph{Goodwillie partition complex} as $\GPC_{\ul{n}}\coloneqq \S^1\wedge(S^0*|\Pi_{\ul{n}}|)$, the suspension of the unreduced suspension of the partition complex. More generally, for a finite set $S$ we have $\GPC_S\coloneqq \S^1\wedge(S^0*|\Pi_S|)$ where $\Pi_S$ is the poset of all partitions of $S$.
\end{defn}

In particular, $\GPC_{\ul{n}}$ has a $\Sigma_n$-action coming from the action on $\ul{n}$, and by Theorem~\ref{thm:Robinson} it admits a non-equivariant equivalence $\GPC_{\ul{n}}\simeq\bigvee_{(n-1)!}\S^{n-1}$, and an isomorphism of $\Sigma_n$-representations
\begin{equation}\label{eq:coh-Tn}
    \wt{H}^{n-1}(\GPC_{\ul{n}};\Z)\cong \wt{H}^{n-3}(|\Pi_{\ul{n}}|;\Z)\cong\Lie(n)\otm\sgn_n.
\end{equation}

Similarly as in the proof of Theorem~\ref{thm:Robinson}, $\GPC_{\ul{n}}$ can be viewed as a space of weighted trees $(\Gamma,l)$ with $n$ leaves, with a function $l$ assigning to each edge a length in $[0,1]$, so that the total distance from the root to each leaf is exactly $1$; if a root or leaf edge has length zero, the weighted tree is identified with the basepoint. For example, $\GPC_2\cong\S^1$ is the unique tree with two leaves, with the length of the root edge $l(e_0)\in\S^1$ as the corresponding value in $\S^1$. The action of $\Sigma_2$ on $\GPC_2$ is trivial.

\subsubsection{The cooperad structure}

In fact, in \cite[Lem.8.6]{Ching} Ching identifies $\GPC_{\ul{n}}=B(com)(n)$,
the arity $n$ of the bar construction of the commutative operad in based spaces: $Com(n)=S^0$ for every $n\geq1$. Then $B(com)(n)=|B_{\bull}(com)(n)|$ has as $k$-simplices chains of partitions $(\wh{0}\leq b_0\leq b_1\leq\dots\leq b_k\leq \wh{1})$, 
with $b_i\in\Pi_{\ul{n}}$, so that if $b_0\neq\wh{0}$ or $b_k\neq\wh{1}$ the chain is identified with the basepoint.
Compare this to \eqref{eq:Pi-n-pt}, where we strictly disallow these equalities (and this amounts to suspensions that take $|\Pi_{\ul{n}}|$ to $\GPC_{\ul{n}}$). 

Moreover, Ching constructs a cooperad structure on $B(com)$, given by ``ungrafting'' weighted trees. In particular, for every $\ul{n}=S_1\sqcup S_2$ there is a map
\begin{equation}\label{eq:cooperad}
    \ungraft_{S_1,S_2}\colon\; \GPC_{\ul{n}}\ra \GPC_2\wedge \GPC_{S_1}\wedge\GPC_{S_2}\cong\S^1\wedge \GPC_{S_1}\wedge\GPC_{S_2},
\end{equation}
that sends a weighted tree with $n$ leaves to the basepoint unless it is of the shape $([\Gamma_1,\Gamma_2],l)$ for trees with leaves $S(\Gamma_1)=S_1$, $S(\Gamma_2)=S_2$, or $S(\Gamma_1)=S_2$, $S(\Gamma_2)=S_1$; such a tree is in turn sent to the tuple $(l(e_0),(\Gamma_1,l_1),(\Gamma_2,l_2))$, where $e_0$ is the root edge of length $l(e_0)\in [0,1]/\partial\cong\S^1$, and an edge $e$ in $\Gamma_b$ has length $l_b(e)\coloneqq \tfrac{l(e)}{1-l(e_0)}$. See also \cite[Def.4.19]{Arone-Brantner} for an exposition of ungrafting, also more generally.

The cooperad structure~\eqref{eq:cooperad} on the collection $\GPC_{\ul{n}}$, $n\geq1$, implies that the collection of Spanier--Whitehead duals forms an operad in spectra:
\begin{equation}\label{eq-def:s-spectral-Lie}
    \mathbf{s}\oLie(n) \coloneqq (\Sigma^\infty\GPC_{\ul{n}})^\vee
\end{equation}
where $\Sigma^\infty$ takes the suspension spectrum, and $^\vee$ the Spanier--Whitehead dual. In fact, the operad $\mathbf{s}\oLie=B(com)^\vee$ is precisely the Koszul dual of the commutative operad $Com$ in the category of spectra~\cite{Ching}, and is thus called the \emph{(shifted) spectral Lie operad}. 
Indeed, this gives an extension to spectra of the theory of Lie algebras: 
by~\eqref{eq:coh-Tn} its homology is concentrated in degree $1-n$ where
\[
    H_{1-n}(\mathbf{s}\oLie(n)) \cong \Lie(n)\otm \sgn_n,
\]
the arity $n$ of the Lie operad, shifted by $1-n$ and twisted by the sign representation.

It makes sense to also consider the operad obtained by operadic desuspension:
\begin{equation}\label{eq-def:spectral-Lie}
    \oLie(n)\coloneqq\Sigma^{1-n}(\Sigma^\infty\GPC_{\ul{n}})^\vee,
\end{equation}
that we call the \emph{spectral Lie operad}. Its homology is the Lie operad itself: $H_0(\oLie(n)) \cong \Lie(n)$.



\subsubsection{Grafting brackets}

The cooperad structure~\eqref{eq:cooperad} on the Goodwillie partition complexes gives rise to a bracket on maps out of them into smash products, as follows.
\begin{defn}\label{def:graft-bracket}
    For spaces $Z_1,Z_2$ and sets $S_1\sqcup S_2=\ul{n}$ define the \emph{grafting bracket} $[-,-]_{\graft}$ as
\[\begin{tikzcd} 
    \Map_*(\S^1\wedge\S^1\wedge\GPC_{S_1}\wedge\GPC_{S_2},Z_1\wedge Z_2)
        \rar{\phi^*}[swap]{\sim} &
    \Map_*(\S^1\wedge\GPC_2\wedge\GPC_{S_1}\wedge\GPC_{S_2},Z_1\wedge Z_2)
        \dar{\ungraft_{S_1,S_2}^*}
    \\
    \Map_*(\Sigma\GPC_{S_1},Z_1)\wedge \Map_*(\Sigma\GPC_{S_2},Z_2)
        \uar{-\wedge-}
    & \Map_*(\Sigma\GPC_{\ul{n}},Z_1\wedge Z_2)
    \\[-5pt]
    \Omega\Map_*(\GPC_{S_1},Z_1)\wedge \Omega\Map_*(\GPC_{S_2},Z_2)
        \rar[densely dashed]{[-,-]_{\graft}}
        \uar{\cong}
    & \Omega\Map_*(\GPC_{\ul{n}},Z_1\wedge Z_2)
        \uar{\cong}
\end{tikzcd}
\]
    where the homeomorphism $\phi\colon\S^1\wedge\GPC_2\to\S^1\wedge\S^1$ is given by $\phi(\theta\wedge t)=t \wedge 2\theta t$ if $\theta\in[0,1/2]$, and $\phi(\theta\wedge t)=(2-2\theta)t\wedge t$ if $\theta\in[1/2,1]$. 
\end{defn}
    In other words, for $\psi_b\in \Map_*(\GPC_{S_b},\Omega Z_b)$ the map $[\psi_1,\psi_2]_{\graft}\in\Map_*(\GPC_{\ul{n}},\Omega(Z_1\wedge Z_2))$ sets to $*$ any weighted tree that is not a grafting $([\Gamma_1,\Gamma_2],l)$ of trees with $S_1$ and $S_2$ as leaf sets. Otherwise, we let $T_0=l(e_0)$ and $\gamma_b=\psi_b(\Gamma_b,l_b)$ and define
\[
    [\psi_1,\psi_2]_{\graft}\colon \theta\wedge([\Gamma_1,\Gamma_2],l) \mapsto
     \left\{\begin{array}{{@{}ll@{}}}
            \gamma_1(T_0) \wedge  \gamma_2(2\theta T_0),    & \theta\in[0,\tfrac{1}{2}],\\[3pt]
            \gamma_1((2-2\theta)T_0) \wedge \gamma_2(T_0),  & \theta\in[\tfrac{1}{2},1].
        \end{array}\right.
\]

There is an easier, but less symmetric version of the grafting bracket; see also Example~\ref{ex:cube-deg-2}.
\begin{lem}\label{lem:square-htpy}
    The map $\phi$ is homotopic to the map $\theta\wedge T_0 \mapsto T_0\wedge\theta$.
    Thus, up to homotopy we have
    \[
        [\psi_1,\psi_2]_{\graft}\colon \theta\wedge([\Gamma_1,\Gamma_2],l)\mapsto
        \psi_1(\Gamma_1,l_1)(T_0)\wedge \psi_2(\Gamma_2,l_2)(\theta).
    \]
\end{lem}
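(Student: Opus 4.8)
The plan is to prove Lemma~\ref{lem:square-htpy} in two steps: first establish that $\phi$ is homotopic (as a based map $\S^1\wedge\GPC_2\to\S^1\wedge\S^1$, i.e.\ through based maps) to the simpler map $\mathrm{sw}\colon\theta\wedge T_0\mapsto T_0\wedge\theta$, and then observe that this homotopy is inherited by the grafting bracket because $[\psi_1,\psi_2]_{\graft}$ was defined by precomposing with $\phi$ (after the homeomorphism $\GPC_2\cong\S^1$ sending $\Gamma_2\wedge l(e_0)$ to $l(e_0)=T_0$, so that $([\Gamma_1,\Gamma_2],l)$ is recorded by the pair $(T_0,(\Gamma_1,l_1))$, hence effectively by $T_0\in\S^1$ and the ungrafted data).

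For the first step I would work on the square $I\times I$ (with its boundary collapsed representing $\S^1\wedge\S^1$) and with the coordinate $\theta\in I$ on $\GPC_2\cong\S^1$ and $t=T_0\in I$ on the other $\S^1$ factor; here one should keep in mind that in $\S^1\wedge\GPC_2$ the variables $t$ and $\theta$ are smashed, so $t=0$ or $t=1$ and $\theta=0$ or $\theta=1$ all map to the basepoint. Writing $\phi$ out: $\phi(\theta\wedge t)=t\wedge 2\theta t$ for $\theta\le 1/2$ and $\phi(\theta\wedge t)=(2-2\theta)t\wedge t$ for $\theta\ge 1/2$. First note $\phi$ is well defined and based: at $\theta=1/2$ both formulas give $t\wedge t$; at $\theta=0$ we get $t\wedge 0=*$, at $\theta=1$ we get $0\wedge t=*$, at $t=0$ or $t=1$ we land in $\{0,1\}$ in one of the coordinates, hence at $*$. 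Now I would exhibit an explicit straight-line homotopy in the target between $\phi$ and $\mathrm{sw}(\theta\wedge t)=t\wedge\theta$: set
\[
    H_s(\theta\wedge t)=\begin{cases} t\wedge\big((1-s)\cdot 2\theta t+s\theta\big), & \theta\le 1/2,\\[2pt]
    \big((1-s)(2-2\theta)t+s(1-\theta)+\text{(correction)}\big)\wedge t, & \theta\ge 1/2.\end{cases}
\]
The point is that on $\theta\le 1/2$ one linearly interpolates the second coordinate from $2\theta t$ to $\theta$ keeping the first coordinate $t$ fixed, and symmetrically on $\theta\ge 1/2$ one interpolates the first coordinate from $(2-2\theta)t$ to $1-(2-2\theta)/\dots$; the only thing to check is that at $s$ fixed this remains a based map (so the interpolation must stay inside the cube, which it does since it is a convex combination of values in $[0,1]$, and must still collapse the appropriate boundary to $*$). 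The cleanest way to organise this, avoiding the asymmetry of the two cases, is to note that both $\phi$ and $\mathrm{sw}$ send the boundary circle $\partial(I\times I)$ of the $(\theta,t)$-square to the basepoint, hence factor through $\S^2=\S^1\wedge\S^1$, and that $\pi_2$ of a wedge target is homotopy-invariant; so it suffices to check that $\phi$ and $\mathrm{sw}$ represent the same element of $\pi_2(\S^1\wedge\S^1)=\Z$, i.e.\ have the same degree. A direct count shows $\phi$ has degree $+1$ (it is a ``pinch-and-fold'' reparametrisation of the identity on $\S^2$, regular value argument: a generic point $(a,b)$ with $b<a$ has exactly one preimage with $\theta\le1/2$ and none with $\theta\ge 1/2$, with local degree $+1$), matching $\mathrm{sw}$, which is the coordinate swap on $\S^1\wedge\S^1$, also of degree $+1$. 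Either route gives a based homotopy $\phi\simeq\mathrm{sw}$.

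For the second step, since $[\psi_1,\psi_2]_{\graft}$ is, by Definition~\ref{def:graft-bracket} together with the explicit formula displayed after it, obtained from the pair $(\gamma_1,\gamma_2)$ by first extracting the data $\theta\wedge([\Gamma_1,\Gamma_2],l)\mapsto(\theta,T_0,(\Gamma_1,l_1),(\Gamma_2,l_2))$ and then feeding $(\theta,T_0)$ through $\phi$ to obtain the two loop-parameters at which $\gamma_1$ and $\gamma_2$ are evaluated, postcomposing the homotopy $H_s$ above with the (continuous) assignment $(a\wedge b)\mapsto \gamma_1(a)\wedge\gamma_2(b)$ yields a homotopy of maps $\GPC_{\ul{n}}\to\Omega(Z_1\wedge Z_2)$ from $[\psi_1,\psi_2]_{\graft}$ to the map $\theta\wedge([\Gamma_1,\Gamma_2],l)\mapsto \psi_1(\Gamma_1,l_1)(T_0)\wedge\psi_2(\Gamma_2,l_2)(\theta)$, which is exactly the claimed simpler formula; trees not of the grafted shape are sent to $*$ throughout the homotopy, so nothing changes there. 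This last step is entirely formal once the homotopy $H_s$ is in hand.

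The main obstacle is the first step, and specifically the bookkeeping of basedness: the two piecewise formulas for $\phi$ are asymmetric, so a naive coordinate-wise linear homotopy may break the condition ``collapses $\partial I\times I\cup I\times\partial I$ to the basepoint'' at intermediate times $s\in(0,1)$. I expect the degree/$\pi_2$ argument to be the safest way to sidestep this — it reduces the whole lemma to the computation that $\phi$ has degree $1$ as a self-map of $\S^2$, which is a short regular-value count — but if one wants an explicit homotopy one must be slightly careful to arrange the interpolation so that it equals $t\wedge(\text{something})$ on $\theta\le 1/2$ and $(\text{something})\wedge t$ on $\theta\ge 1/2$ for all $s$, with the two pieces agreeing along $\theta=1/2$ at every $s$, which pins down the correction term. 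Either way the remaining parts of the lemma are routine.
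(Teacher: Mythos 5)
Your argument reaches the right conclusion but by a genuinely different route than the paper. The paper's proof is the explicit homotopy you first attempted and then abandoned: it straightens the segment $(2-2\theta)T_0\wedge T_0$, $\theta\in[\tfrac12,1]$, into $T_0\wedge(2-2\theta)T_0$ and then rescales, so the boundary bookkeeping you were worried about is handled in two short steps. Your fallback is instead to note that $\phi$ and the swap $\mathrm{sw}$ are based maps $\S^1\wedge\GPC_2\cong\S^2\to\S^1\wedge\S^1\cong\S^2$, hence based-homotopic if and only if they have equal degree; this is legitimate and arguably cleaner, and your second step (pushing the homotopy through Definition~\ref{def:graft-bracket}, with non-grafted trees sent to the basepoint throughout) is indeed formal. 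What the paper's route buys is an explicit homotopy with no machinery; what yours buys is immunity from the basedness bookkeeping, and since the lemma is only ever used up to homotopy (e.g.\ in Example~\ref{ex:cube-deg-2} and Theorem~\ref{thm:J}), the loss of explicitness is harmless.

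One caveat: both of your degree computations are off by a sign, though the errors cancel. The coordinate swap on $\S^1\wedge\S^1$ is reflection of the $(\theta,t)$-square in its diagonal, so it has degree $(-1)^{1\cdot 1}=-1$, not $+1$; and $\phi$ also has degree $-1$: at a regular value $(a,b)$ with $0<b<a<1$ the unique preimage lies in the branch $(\theta,t)\mapsto(t,2\theta t)$ (the branch $\theta\ge\tfrac12$ contributes none, since there $a=(2-2\theta)b\le b$), and the Jacobian determinant there is $-2t<0$. So the equality of degrees --- the only fact your argument uses --- is true and the proof goes through, but the statements ``local degree $+1$'' and ``the swap has degree $+1$'' should be corrected.
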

\begin{proof}
    To homotope $\phi$ to the swap map, first straighten out the line $(2-2\theta)T_0 \wedge T_0$ for $\theta\in[1/2,1]$ into $T_0\wedge (2-2\theta)T_0$ by an easy homotopy, and then rescale.
\end{proof}

\subsection{Samelson brackets and canonical maps}
\label{subsec:Sam-brackets}

In this section we recall the definition of Samelson brackets on loop spaces, that are adjoint to Whitehead brackets on spaces; see for example~\cite{Whitehead}. The collection of canonical Samelson maps describes the homotopy type of the loop space of a wedge sum -- this is the Hilton--Milnor equivalence that we recall in \eqref{eq:HM-all}.

\subsubsection{Commutator brackets}

Fix spaces $Z_1$ and $Z_2$, and denote $\iota_b\colon Z_b\hra Z_1\vee Z_2$ for $b=1,2$. Consider the commutator map, sending $(\gamma_1,\gamma_2)\in\Omega Z_1\times\Omega Z_2$ to $(\iota_1\gamma_1)(\iota_2\gamma_2)(\iota_1\gamma_1)^{-1}(\iota_2\gamma_2)^{-1}\in\Omega(Z_1\vee Z_2)$, simply written as $\gamma_1\gamma_2\gamma_1^{-1}\gamma_2^{-1}$. On $\Omega Z_1\tm*\subseteq\Omega Z_1\tm\Omega Z_2$ and $*\tm\Omega Z_2\subseteq\Omega Z_1\tm\Omega Z_2$ we respectively have $[\gamma_1,\const_*]$ and $[\const_*,\gamma_2]$.

These loops admit canonical nullhomotopies $\gamma_b^{\downarrow t_b}\in\Omega(Z_1\vee Z_2)$ with $t_b\in[0,1]$ and $b=1,2$, that go from $\gamma_1^{\downarrow 0}=\gamma_2^{\downarrow 0}=\const_*$ to $\gamma_1^{\downarrow 1}=[\gamma_1,\const_*]$ and $\gamma_2^{\downarrow 1}=[\const_*,\gamma_2]$, by cutting the path short: 
\begin{equation}\label{eq:nullhtpy}
    \gamma_1^{\downarrow t_1}=\left\{\begin{array}{{@{}lc@{}}}
        \gamma_1(4\theta t_1)     & \theta\in[0,\tfrac{1}{4}]            \\[3pt]
        \gamma_1(t_1)             & \theta\in[\tfrac{1}{4},\tfrac{1}{2}]           \\[3pt]
        \gamma_1((3-4\theta)t_1)  & \theta\in[\tfrac{1}{2},\tfrac{3}{4}]  \\[3pt]
        \gamma_1(0)=*             & \theta\in[\tfrac{3}{4},1]
    \end{array}
        \quad\quad
        \begin{array}{{@{}l@{}}}
             \gamma_2(0)=*              \\[3pt]
             \gamma_2((4\theta-1)t_2)   \\[3pt]
             \gamma_2(t_2)              \\[3pt]
             \gamma_2((4\theta-3)t_2)
        \end{array}\right\}=\gamma_2^{\downarrow t_2}
\end{equation}
 Therefore, using Definition~\ref{def:smash} of the smash product we have the \emph{commutator bracket}:
\begin{equation}\label{eq:commutator}
    [-,-]\colon\;
    \Omega Z_1\wedge\Omega  Z_2
    \ra \Omega(Z_1\vee Z_2).
\end{equation}

\subsubsection{Samelson brackets}

\begin{defn}\label{def:Samelson}
    Fix collections of spaces $\{X_i\}_{i\in\ul{n}}$  and $\{Y_i\}_{i\in\ul{n}}$, and maps $f_i\colon X_i\to\Omega Y_i$, and denote $Y_S\coloneqq \bigvee_{i\in S} Y_i$.
    For $w\in\LL(n)$ inductively define the space $w(X_i)$ and the \emph{iterated Samelson bracket} $w(f_i)\colon w(X_i)\to\Omega Y_{\ul{n}}$, as follows.
    
    If $w=x^i$ then $w(x_i)\coloneqq f_i$.
    If $w=[w_1,w_2]$ for some $w_b\in \LL(S_b)$ with $S_1\sqcup S_2=\ul{n}$, then the maps $w_b(f_i)\colon w_b(X_i)\to\Omega Y_{S_b}$ have been defined by the inductive hypothesis. We define the space $w(X_i)\coloneqq w_1(X_i)\wedge w_2(X_i)$ and the map $w(f_i)$ by
\[\begin{gathered}[b]
\begin{tikzcd}[column sep=3cm]
     &[-2.5cm] \Omega  Y_{S_1}\wedge \Omega  Y_{S_2}
        \rar{[-,-]} 
    & \Omega( Y_{S_1}\vee  Y_{S_2})
        \dar[equals]\\
    w(X_i)\coloneqq &[-2.5cm]
    w_1(X_i)\wedge w_2(X_i)
        \uar{w_1(f_i)\wedge w_2(f_i)} 
        \rar[densely dashed]{w(f_i)}
    & \Omega Y_{\ul{n}}
\end{tikzcd}\\[-\dp\strutbox]
\end{gathered}\qedhere
\]
\end{defn}

\subsubsection{Canonical Samelson maps}

In particular, taking for each $i\in\ul{n}$ the space $Y_i\coloneqq\Sigma X_i$ and the map $f_i\coloneqq x_i\colon X_i\to\Omega\Sigma X_i$ adjoint to the identity, we have the \emph{canonical Samelson maps}
\begin{equation}\label{eq:can-Sam}
    w(x_i)\colon w(X_i)\to\Omega\Sigma X_{\ul{n}}.
\end{equation}
We can extend each $w(x_i)$ multiplicatively to a map $hm_w\colon\Omega\Sigma w(X_i)\to\Omega\Sigma X_{\ul{n}}$ given by $hm_w(\theta\mapsto t_\theta\wedge z)=(\theta\mapsto w(x_i)(z)(t_\theta))$.
The weak product (filtered colimit of finite products) of these maps over all basic words $w$ in $B\LL(n)\subset\LL(n)$ (see §\ref{subsec:Lie}) gives the classical \emph{Hilton--Milnor equivalence}:
\begin{equation}\label{eq:HM-all}
\begin{tikzcd}
        hm_B=\prod hm_w\colon\;\prod_{w\in B\LL(n)}\Omega\Sigma w(X_i)\rar{\sim} 
            &   \Omega\Sigma X_{\ul{n}}.
\end{tikzcd}
\end{equation}

\section{Cubes and external Samelson brackets}
\label{sec:cubes-ext}

 Both homotopy and isotopy Taylor towers are defined using cubical diagrams, that offer a convenient language for multi-relative homotopy theory; we refer to \cite{Munson-Volic}. In §\ref{subsec:cubes} we recall the definition of cubes and total homotopy fibres, and including and collapsing cubes.
In §\ref{subsec:ext-Sam-brackets} we introduce the notion of external Samelson brackets: these generalise Samelson brackets to total homotopy fibres. In §\ref{subsec:examples} we study some examples. In §\ref{subsec:total-Sam} we extend the definition of Samelson maps~\eqref{eq:can-Sam} from loop spaces $\Omega\Sigma X_i$ to loop spaces of total homotopy fibres $\Omega\tofib(\Sigma X_{\ul{n}};\kappa)$.

\subsection{Cubes}
\label{subsec:cubes}

    Let $\mathcal{P}(n)$ be the poset of all subsets of $\ul{n}\coloneqq\{1,\dots,n\}$. A \emph{cube} (more precisely, a cubical diagram) is a functor $Y_{\bull}\colon \mathcal{P}(n)\to \Top_*$. For example, for $I\coloneqq[0,1]$ we have
    \[
    I^{\bull}\coloneqq\begin{cases}
        S &\mapsto\quad I^S\coloneqq\Map(S,I),\\
        S \subseteq T & \mapsto\quad \iota_S^T\colon I^S=I^S\tm\{1\}
        ^{T\sm S}\hra I^T.
    \end{cases}
    \]
    In fact, the cube $I^{\bull}\cong|\mathcal{P}(n)\downarrow\bull|$ is the geometric realization of the cubical diagram of categories given by $S\mapsto\mathcal{P}(n)\downarrow S$, the category over $S\in\mathcal{P}$.

    For a cube $Y_{\bull}$ consisting of spaces $Y_S$ and maps $y_S^T$ we will often write $(Y_{\bull};y)$ to make the dependence on maps clear.
    
    Its \emph{total homotopy fibre} is the subspace
    $\tofib(Y_{\bull};y)\subseteq \prod_{S\subseteq\ul{n}}\Map(I^S,Y_S)$
    consisting of those $\{F^S\colon I^S\to Y_S\}$ such that ($\diamond$)  if $p\in I^S$ has a coordinate equal to $0$ (we say $p$ belongs to a \emph{0-valued face}), then $F^S(p)=*$, and ($\diamond$) for any $S\subseteq T$ the following diagram commutes:
    \begin{equation}\label{eq:tofib-pt}\begin{tikzcd}
        I^S\arrow[hook]{r}{\iota_S^T}\dar[swap]{F^S} & I^T\dar{\;F^T}\\
       Y_S\rar{y_S^T} & Y_T
    \end{tikzcd}
    \end{equation}
\begin{rem}
\begin{enumerate}
    \item
    Sometimes the opposite convention is used, that maps are compatible on 0-valued faces and $F^S=*$ on 1-valued faces, as in~\cite{K-thesis-paper}.
    \item 
     Below we will use the fact that total homotopy fibres commute with homotopy limits, and in particular loop spaces: $\Omega\tofib(Y_{\bull};y)\cong\tofib(\Omega Y_{\bull};\Omega y)$.
     
     \qedhere
\end{enumerate}
\end{rem}


Let us now fix $n\geq1$ and based spaces $Y_0,Y_1,\dots,Y_n$ and for any $P\subseteq[n]\coloneqq\{0,1,\dots,n\}$ denote $Y_P\coloneqq \bigvee_{i\in P}Y_i$.
We consider the \emph{including cube}
\begin{equation}\label{eq:including-cube}
(Y_{0\ul{n}},\iota)\coloneqq
\begin{cases}
    S\subseteq\ul{n}    &\mapsto\quad Y_{0S},\\
    S \subseteq T       & \mapsto\quad \iota_S^T\colon Y_{0S}\hra Y_{0T},
\end{cases}
\end{equation}
and also the contravariant \emph{collapsing cube} (note the contravariance!)
\begin{equation}\label{eq:collapsing-cube}
(Y_{0\ul{n}};\kappa)\coloneqq
\begin{cases}
    S\subseteq\ul{n}    &\mapsto\quad Y_{0S},\\
    S \subseteq T       & \mapsto\quad \kappa_S^T\colon Y_{0T}\sra Y_{0S}.
\end{cases}
\end{equation}
Often we will have $Y_0=*$ so we omit the index $0$; the case where $Y_0\neq*$ is considered in §\ref{subsec:J-general}.
Let $\proj\colon\tofib(Y_{0\ul{n}};\kappa)\to Y_{0\ul{n}}$ be the canonical projection:  $\proj(\{F^S\})=F^\emptyset\colon I^0\to Y_{0\ul{n}}$. 

We have $\kappa_S^T\circ\iota_S^T=\Id$, that is, $\kappa_S^T$ is a \emph{left} homotopy inverse of $\iota_S^T$. This implies that $\hofib(\iota_S^T)\simeq\Omega\hofib(\kappa_S^T)$. Generalising this equivalence to total homotopy fibres (which can be described as iterated homotopy fibres), yields the following;  see \cite[App.A]{K-thesis-paper} for a proof.
\begin{lem}\label{lem:deloop}
    The composition of the projection map and the inclusion of cubes constant on all faces:
    \[
    \chi^{-1}\colon\Omega^n\tofib\left(Y_{0\ul{n}}; \kappa\right)\ra 
    \Omega^nY_{0\ul{n}}\ra
    \tofib\left(Y_{0\ul{n}}; \iota\right)
    \]
    is a homotopy equivalence. 
    Moreover, the canonical forgetful map induces an injection
    \[
    \pi_*\tofib(Y_{0\ul{n}}; \kappa)\hra \pi_*(Y_{0\ul{n}}; \kappa).
    \]
\end{lem}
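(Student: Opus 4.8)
The plan is to compute the total homotopy fibre of an $n$-cube as an iterated homotopy fibre, one coordinate direction at a time, and to exploit that in each direction the including cube is a \emph{strict} section of the collapsing cube. The only homotopy-theoretic input is the elementary fact that whenever $s\colon B\ra E$ and $p\colon E\ra B$ satisfy $p\circ s=\Id_B$, there is a natural equivalence $\hofib(s)\simeq\Omega\,\hofib(p)$, realised by the composite $\Omega\,\hofib(p)\ra\Omega E\ra\hofib(s)$ whose first map projects a loop in $\hofib(p)$ to its underlying loop in $E$, and whose second map sends a loop $\omega$ — automatically based at $*_E=s(*_B)$ — to the point of $\hofib(s)$ with constant $B$-coordinate and path $\omega$. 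This is immediate from the fibre sequence
\[
    \hofib(s)\ra\hofib(p\circ s)\ra\hofib(p)
\]
associated to the composite $B\xrightarrow{\,s\,}E\xrightarrow{\,p\,}B$, since $\hofib(p\circ s)=\hofib(\Id_B)\simeq *$.

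Recall (see \cite{Munson-Volic}) that for any $k\in\ul n$ an $n$-cube $Z_\bull$ may be viewed as a natural transformation between the two $(n{-}1)$-cubes obtained by restricting to subsets not containing, respectively containing, $k$, and that $\tofib(Z_\bull)$ is the homotopy fibre of the induced map on their total homotopy fibres. I would then argue by induction on $n$. The case $n=1$ is exactly the elementary fact above applied to $s=\iota\colon Y_0\ra Y_{01}$, $p=\kappa\colon Y_{01}\ra Y_0$ with $\kappa\iota=\Id$: it says precisely that $\chi^{-1}\colon\Omega\,\hofib(\kappa)\ra\Omega Y_{01}\ra\hofib(\iota)$ is an equivalence. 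For $n>1$, split off the $n$-th direction of the including cube $(Y_{0\ul n};\iota)$: the transformation in question is the levelwise inclusion of the $Y_n$-summand, from the including cube on $(Y_0,Y_1,\dots,Y_{n-1})$ to the including cube on $(Y_0\vee Y_n,Y_1,\dots,Y_{n-1})$, and the levelwise collapse of $Y_n$ is a retraction commuting with every structure map. Hence on total homotopy fibres the induced map is a strict section of its collapse, and the elementary fact gives $\tofib(Y_{0\ul n};\iota)\simeq\Omega\,\tofib(\widetilde Y_\bull)$, where $\widetilde Y_\bull$ is the $n$-cube that collapses $Y_n$ in the $n$-th direction and includes summands in the remaining $n{-}1$ directions. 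Those directions are still of including type — each again a strict section of a collapse — so the same step applies to them in turn; peeling the directions off one after another converts every including direction into a collapsing one and accumulates a loop each time, yielding an equivalence $\tofib(Y_{0\ul n};\iota)\simeq\Omega^n\tofib(Y_{0\ul n};\kappa)$.

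The step I expect to be the main obstacle is checking that this composite equivalence is the specific map $\chi^{-1}$ of the statement, not merely \emph{an} equivalence. Because every application of the elementary fact contributes the map ``project to the ambient space, then re-include as a point with constant lower coordinates'', what must be verified is that the $n$ successive projections compose to $\Omega^n\proj$ and the $n$ successive re-inclusions compose to the canonical map $\Omega^nY_{0\ul n}\ra\tofib(Y_{0\ul n};\iota)$ of the statement (the identity at the terminal vertex, constant at the basepoint on all other faces). I would establish this by strengthening the induction so that, together with the equivalence, one carries along its compatibility with the projections $\proj$ and with the constant-cube inclusions in each coordinate. This is the bookkeeping performed in \cite[App.A]{K-thesis-paper}; equivalently, one can exhibit a homotopy inverse to $\chi^{-1}$ by hand and check the two composites against the identities, which comes down to the same computation.

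The second assertion then follows formally. By definition $\chi^{-1}$ is the constant-cube inclusion precomposed with $\Omega^n\proj$; since $\chi^{-1}$ is an equivalence, $\Omega^n\proj$ has a left homotopy inverse and is therefore injective on homotopy groups, and hence so is $\proj$. As $\proj$ is obtained from the canonical forgetful map out of $\tofib(Y_{0\ul n};\kappa)$ by evaluating a single component, that forgetful map is injective on homotopy groups as well, which is the stated injection $\pi_*\tofib(Y_{0\ul n};\kappa)\hra\pi_*(Y_{0\ul n};\kappa)$.
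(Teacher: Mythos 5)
Your treatment of the equivalence itself is sound and is essentially the route the paper indicates: the identity $\kappa\circ\iota=\Id$ gives $\hofib(\iota)\simeq\Omega\hofib(\kappa)$ via the fibre sequence of a composite, and one peels off the cube directions one at a time, converting each including direction into a collapsing one at the cost of a loop. The remaining issue you correctly flag -- that the resulting equivalence is the specific composite $\chi^{-1}$ rather than just some equivalence -- is exactly the bookkeeping that the paper itself delegates to \cite[App.A]{K-thesis-paper}, so I have no objection there.

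The genuine gap is in your deduction of the ``Moreover'' statement. From the factorisation $\chi^{-1}=(\text{constant-cube inclusion})\circ\Omega^n\proj$ and the fact that $\chi^{-1}$ is an equivalence you may conclude that $\Omega^n\proj$ is injective on homotopy groups; but since $\pi_k(\Omega^nX)\cong\pi_{k+n}(X)$, this only says that $\proj_*$ is injective on $\pi_m$ for $m\geq n$, and gives no information in degrees $m<n$. The step ``and hence so is $\proj$'' is therefore a non sequitur in low degrees, and those degrees are not vacuous: $\tofib(Y_{0\ul{n}};\kappa)$ need not be $(n-1)$-connected. For instance, take $Y_0=*$, $n=2$ and $Y_1=Y_2=\S^1$; then $\tofib(Y_{\ul{2}};\kappa)\simeq\hofib(\S^1\vee\S^1\to\S^1\tm\S^1)$ has $\pi_1$ isomorphic to the commutator subgroup of the free group $F_2$, which is free of infinite rank, while your argument only covers $\pi_m$ for $m\geq 2$. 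The claimed injection in degree $1$ is true here, but it is not a consequence of what you proved.

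The repair is available inside your own peeling induction, and does not use the equivalence at all. Splitting off the $n$-th direction of the \emph{collapsing} cube exhibits $\tofib(Y_{0\ul{n}};\kappa)$ as the homotopy fibre of the map from the total fibre of the collapsing $(n{-}1)$-cube on $(Y_0\vee Y_n,Y_1,\dots,Y_{n-1})$ to that of the collapsing $(n{-}1)$-cube on $(Y_0,\dots,Y_{n-1})$, induced by collapsing $Y_n$ levelwise. This map has a strict section (the levelwise inclusion of $Y_n$, which is a map of cubes), hence is surjective on all homotopy groups, so the connecting maps in the long exact sequence vanish and $\pi_*\tofib(Y_{0\ul{n}};\kappa)$ injects into $\pi_*$ of the former total fibre in every degree. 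That total fibre has initial vertex $Y_{0\ul{n}}$, so by induction on $n$ (the case $n=1$ being the split fibration $\hofib(\kappa)\to Y_{01}\to Y_0$) its homotopy groups inject into $\pi_*Y_{0\ul{n}}$ compatibly with the projections, and composing gives the stated injection in all degrees.
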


Note that $\chi^{-1}$ sends the collection $F^{\bull}=\{F^S\colon I^{\ul{n}\sm S}\to\Omega^nY_{0S}\}_{S\subseteq\ul{n}}$ to the collection $\chi^{-1}(F^{\bull})=\{F^{\ul{n}}\}\cup\{\const_S\}_{S\subset\ul{n}}$ where $F^{\ul{n}}\colon I^\emptyset\tm I^n\to Y_{0\ul{n}}$ maps the boundary to the basepoint. 
In fact, in \cite[App.A]{K-thesis-paper} we give an explicit homotopy equivalence $\chi\colon \tofib\left(Y_{0\ul{n}}; \iota\right) \to \Omega^n\tofib\left(Y_{0\ul{n}}; \kappa\right)$, that will be used in §\ref{subsec:istpy-calc}.

\subsection{Definition of external Samelson brackets}
\label{subsec:ext-Sam-brackets}

In this section we fix a collection $\{Y_i\}_{i\in\ul{n}}$ and two subsets $S_b\subseteq\ul{n}$ with $S_1\cup S_2=\ul{n}$. Recall that for a finite set $T$ we write $Y_T\coloneqq\bigvee_{i\in T}Y_i$, and that $(Y_S;\kappa)$ is the collapsing $|S|$-cube as in~\eqref{eq:collapsing-cube}.

Observe that we can form the $n$-cube $(Y_{S_1}\wedge Y_{S_2};\kappa)$. Moreover, given $\{F_b^{T_b}\}_{T_b\subseteq S_b}\in\tofib(Y_{S_b};\kappa)$ with $F_b^{T_b}\colon I^{S_b\sm T_b}\to Y_{T_b}$, we can form 
\[
    q\circ(F_1^{T_1}\tm F_2^{T_2})\colon I^{S_1\sm T_1}\tm I^{S_2\sm T_2}\ra 
    Y_{T_1}\tm Y_{T_2}\ra 
    Y_{T_1}\wedge Y_{T_2}.
\]
Given $T\subseteq\ul{n}$ take $T_b\coloneqq S_b\cap T$, so that $q\circ(F_1^{T_1}\tm F_2^{T_2})\colon I^{\ul{n}\sm T}\to Y_{T_1}\wedge Y_{T_2}$. These maps satisfy the second condition in \eqref{eq:tofib-pt}, but not the first -- namely, they are nontrivial on 0-valued faces.

Thus, they unfortunately do not define a point in $\tofib(Y_{S_1}\wedge Y_{S_2};\kappa)$

However, if we \emph{replace each $Y_i$ by $\Omega Y_i$}, we can use the commutator map~\eqref{eq:commutator} pointwise, so that
\begin{equation}
\begin{tikzcd}
    {[F_1^{T_1}, F_2^{T_2}]}\colon\; I^{\ul{n}\sm T}= I^{S_1\sm T_1}\tm I^{S_2\sm T_2}\ar{rr}{q\circ(F_1^{T_1}\tm F_2^{T_2})} &&
    \Omega Y_{T_1}\wedge \Omega Y_{T_2}\rar{[-,-]} & \Omega(Y_{T_1}\vee Y_{T_2}
    )
\end{tikzcd}
\end{equation}
is canonically nullhomotopic on 0-valued faces, and these maps can be assembled into a point $F\in\tofib(\Omega(Y_{S_1}\vee Y_{S_2});\Omega\kappa)$. We just need to make sure that the chosen nullhomotopies are compatible for various sets $T$, as follows.


\begin{enumerate}
\item
    We start with $F^{\ul{n}}\coloneqq[F_1^{S_1}, F_2^{S_2}]\colon I^\emptyset\to\Omega Y_{\ul{n}}$: it is simply the commutator loop of $F_b^{S_b}\in\Omega Y_{S_b}$. 
\item
    Then, for each $i\in S_1$ we have a path $F_1^{S_1\sm\{i\}}\colon I\to\Omega Y_{S_1\sm\{i\}}$ from $*$ to $\kappa F_1^{S_1}$. Taking pointwise commutators gives the path $[F_1^{S_1\sm\{i\}},F_2^{S_2}]$ from $[*,F_2^{S_2}]$ to $[\kappa F_1^{S_1}, F_2^{S_2}]$. Moreover, the formula~\eqref{eq:nullhtpy} gives a homotopy $[*,F_2^{S_2}]^{\downarrow \bull}$ from $*$ to $[*,F_2^{S_2}]$. Concatenating these two paths gives rise to 
\begin{equation}\label{eq:F-sm-i}
    F^{\ul{n}\sm\{i\}}\colon I=I^{\{i\}}\to
    \Omega Y_{\ul{n}\sm\{i\}}\,,\;
    t_i\mapsto
        \begin{cases}
        [F_1^{S_1\sm\{i\}}(\tfrac{3t_i-1}{2}),F_2^{S_2}], & t_i\in[\tfrac{1}{3},1],\\ 
        [*,F_2^{S_2}]^{\downarrow 3t_i}, & t_i\in[0,\tfrac{1}{3}].
        \end{cases}
\end{equation}
\item
    Next, for $i,j\in S_1$ we let $T_{ij}=\min\{t_i,t_j\}$ and define
\[
F^{\ul{n}\sm\{i,j\}}\colon I^2=I^{\{i,j\}}\to 
    \Omega Y_{\ul{n}\sm\{i,j\}}\,,\;
    (t_i,t_j)\mapsto
    \begin{cases}
        [F_1^{S_1\sm\{i,j\}}(\tfrac{3t_i-1}{2},\tfrac{3t_j-1}{2}),F_2^{S_2}], 
            & (t_i,t_j)\in[\tfrac{1}{3},1]^2,\\
        [*,F_2^{S_2}]^{\downarrow 3T_{ij}},
            & (\exists k\in\{i,j\})\, t_k\in[0,\tfrac{1}{3}].
    \end{cases}
\]
In other words, we interpolate between the two paths $F^{\ul{n}\sm\{i\}}$ and $F^{\ul{n}\sm\{j\}}$: on a smaller square we use $[F_1^{S_1\sm\{i,j\}},F_2^{S_2}]$ and on the rest we repeat homotopies $F_1^{S_1\sm\{i,j\}}$ along lines.
\item 
    The cases $i\in S_2$ and $i,j\in S_2$ are analogous.
\item
    On the other hand, if $i\in S_1$ and $j\in S_2$ then 
\[
    F^{\ul{n}\sm\{i,j\}}\colon I^2=I^{\{i,j\}}\to 
    \Omega Y_{\ul{n}\sm\{i,j\}}\,,\;
    (t_i,t_j)\mapsto 
    \begin{cases}
        [F_1^{S_1\sm\{i\}}(\tfrac{3t_i-1}{2}),F_2^{S_2\sm\{j\}}(\tfrac{3t_j-1}{2})], & (t_i,t_j)\in[\tfrac{1}{3},1]^2,\\
        [F_1^{S_1\sm\{i\}}(\tfrac{3t_i-1}{2}),*]^{\downarrow 3t_j}, & t_j\in[0,\tfrac{1}{3}],\\
        [*,F_2^{S_2\sm\{j\}}(\tfrac{3t_j-1}{2})]^{\downarrow 3t_i}, & t_i\in[0,\tfrac{1}{3}].
    \end{cases}
\]
    In other words, we interpolate between the paths $F_1^{S_1\sm\{i\}}$ and $F_2^{S_2\sm\{j\}}$ using appropriate nullhomotopy on each face. Note that when both $t_i$ and $t_j$ are in $[0,\tfrac{1}{3}]$ we have trivial nullhomotopies of $[*,*]=*$.
\item
    A similar pattern continues for higher dimensional faces $I^T\to\Omega Y_{\ul{n}\sm T}$. In particular, we will be interested in $T=\ul{n}\sm\{i\}$ for some $i\in\ul{n}$. Say $i\in S_1$ so $F_1^{\{i\}}\colon I^{S_1\sm\{i\}}\to\Omega Y_i$, whereas $F_2^{\emptyset}\colon I^{S_2}\to\Omega *$ is the constant map. Therefore, we similarly let 
    \[
    T_{S_2}\coloneqq\min_{k\in S_2}t_k
    \]
    and for $(t_j)_{S_1\sm\{i\}}\tm (t_k)_{S_2}\in I^{S_1\sm\{i\}}\tm I^{S_2}\cong I^{\ul{n}\sm\{i\}}$ define
\begin{equation}\label{eq:F-i}
    F^{\{i\}}\colon I^{\ul{n}\sm\{i\}}
    \to
    \Omega Y_{i}\,,\;
    (t_j)\tm(t_k)\mapsto
    \begin{cases}
        [F_1^{\{i\}}((\tfrac{3t_j-1}{2})),*], 
            & (t_j)\tm(t_k)\in[\tfrac{1}{3},1]^{\ul{n}\sm\{i\}},\\
        [F_1^{\{i\}}((\tfrac{3t_j-1}{2})),*]^{\downarrow 3T_{S_2}}, 
            & (\exists k\in S_2)\, t_k\in[0,\tfrac{1}{3}],\\
        *, 
            &(\exists k\in S_1\sm\{i\})\, t_k\in[0,\tfrac{1}{3}].
    \end{cases}
\end{equation}
\end{enumerate}

Thus, we have constructed a map
\[
   \tofib( \Omega Y_{S_1};\Omega\kappa)\times \tofib(\Omega Y_{S_2};\Omega\kappa)\ra \tofib( \Omega (Y_{S_1}\vee Y_{S_2});\Omega\kappa). 
\]
Next, if one of the collections $F_b^{T_b}$ is trivial then $q\circ(F_1^{T_1}\tm F_2^{T_2})$ is as well, so the displayed map descends to the smash product of total homotopy fibres.
Finally, note that $S_1\cap S_2$ is possibly nonempty, but we can fold the repeating factors in $Y_{S_1}\vee Y_{S_2}$: for every $i\in S_1\cap S_2$ use the map $Y_i\vee Y_i\to Y_i$ that is the identity on each factor.
\begin{defn}\label{def:external-Samelson}
    We define the \emph{external Samelson bracket} $[-,-]_{\ext}$ by the diagram
    \[
    \begin{gathered}[b]
    \begin{tikzcd}
        \tofib(\Omega Y_{S_1};\Omega\kappa)\wedge \tofib(\Omega Y_{S_2};\Omega\kappa)
            \rar{} 
        & \tofib(\Omega(Y_{S_1}\vee Y_{S_2});\Omega\kappa)
            \dar{\mathrm{fold}}\\
        \Omega\tofib(Y_{S_1};\kappa)\wedge \Omega\tofib(Y_{S_2};\kappa)
            \uar[equals]
            \rar[densely dashed]{[-,-]_{\ext}}
        & \Omega\tofib(Y_{\ul{n}};\kappa).
    \end{tikzcd}\\[-\dp\strutbox]
    \end{gathered}\qedhere
    \]
\end{defn}

\subsection{Some examples}
\label{subsec:examples}

\begin{ex}\label{ex:cube-deg-2}
    For $n=2$ we start with two 1-cubes $\Omega Y_b\to *$ for $b=1,2$ and $F_b\in\Omega\tofib(Y_{\{b\}};\kappa)$ that consists of $F_b^{\{b\}}\in \Omega Y_b$ and $F_b^{\emptyset}=\const_*\colon I\to \Omega *$. 

    The special case of the formula~\eqref{eq:F-sm-i} for $n=2$ and $i=1$ gives $F^{\{2\}}$ as the concatenation of the path on $[1/3,1]$ constantly equal to $[*,F_2^{\{2\}}]$ and the path $[*,F_2^{\{2\}}]^{\downarrow 3t_1}$ for $t_1\in[0,1/3]$. This is also the special case of the analogue of~\eqref{eq:F-i} for $i\in S_2$ for $n=2$ and $i=2$.

    We can of course mod out the constant part on $[1/3,1]$ and obtain that $F=[F_1,F_2]_{\ext}$ is the point in $\Omega\tofib(Y_{\{12\}};\kappa)
    \cong\Omega\hofib(Y_1\vee Y_2\to Y_1\times Y_2)$ given by $F^{\{1,2\}}=[F_1^{\{1\}},F_2^{\{2\}}]\in \Omega(Y_1\vee Y_2)$ together with two paths $F^{\{b\}}\colon I\to\Omega Y_b$ given by $F^{\{1\}}(t_2)=[F_1^{\{1\}},*]^{\downarrow t_2}$ and $F^{\{2\}}(t_1)=[*,F_2^{\{2\}}]^{\downarrow t_1}$. 

    We will later need the following computation. For $t\in I$ (time parameter of a path) and $\theta\in I$ (loop parameter) consider the pointwise smash product $F^{\{1\}}(t)_\theta\wedge F^{\{2\}}(t)_\theta\in Y_1\wedge Y_2$ given by the left-hand side in
    \[
       \left(\begin{array}{{@{}cc@{}}}
            F_1^{\{1\}}(4\theta t) \wedge *                  & \theta\in[0,\tfrac{1}{4}]\\[3pt]
            F_1^{\{1\}}(t)\wedge F_2^{\{2\}}((4\theta-1)t)         & \theta\in[\tfrac{1}{4},\tfrac{1}{2}]\\[3pt]
            F_1^{\{1\}}((3-4\theta)t) \wedge F_2^{\{2\}}(t)       & \theta\in[\tfrac{1}{2},\tfrac{3}{4}]\\[3pt]
            *\wedge F_2^{\{2\}}((4\theta-3)t)                 & \theta\in[\tfrac{3}{4},1]
        \end{array}\right)
        \simeq
        \left(\begin{array}{{@{}cc@{}}}
            F_1^{\{1\}}(t)\wedge F_2^{\{2\}}(2\theta t)           & \theta\in[0,\tfrac{1}{2}]\\[3pt]
            F_1^{\{1\}}((2-2\theta)t) \wedge F_2^{\{2\}}(t)       & \theta\in[\tfrac{1}{2},1]
        \end{array}\right),
    \]
    which is clearly homotopic to the right hand side. Note that this sends the boundary of $I^2$ to $*$. Moreover, by Lemma~\ref{lem:square-htpy} this is homotopic to $\S^1\wedge\S^1\ni \theta\wedge t \mapsto F_1^{\{1\}}(t)\wedge F_2^{\{2\}}(\theta)\in Y_1\wedge Y_2$.
\end{ex}

\begin{ex}\label{ex:F-i}
    For $n\geq2$ and $S_1\sqcup S_2=\ul{n}$ fix $i\in S_1$. Then $F^{\{i\}}\colon I^{S_1\sm\{i\}}\times I^{S_2}\to \Omega (Y_i\vee *)
    \to \Omega Y_i$ sends $(t_j)_{j\in S_1\sm\{i\}}\tm(t_k)_{k\in S_2}$ to the commutator $[F_1^{\{i\}}((\tfrac{3t_j-1}{2}))
    ,\const_*]$ if $\tfrac{1}{3}\leq t_k\leq 1$ for all $k\in S_2$, and to the canonical nullhomotopy 
    $(t_j)_{j\in S_1\sm\{i\}}\tm(t_k)_{k\in S_2}\mapsto F_1^{\{i\}}((\tfrac{3t_j-1}{2}))^{\downarrow 3T_{S_2}}
    $ when at least one $0\leq t_k\leq \tfrac{1}{3}$, where $T_{S_2}=\min_{k\in S_2}t_k$. Note that when some $t_j<\tfrac{1}{3}$ we use the trivial nullhomotopy, in agreement with \eqref{eq:F-i}. 
\end{ex}

We will need the following generalisation of Example~\ref{ex:cube-deg-2} to all $n\geq2$.

\begin{lem}\label{lem:F-i}
    Let $n\geq2$ and $S_1\sqcup S_2=\ul{n}$ and $F_b\in\Omega \tofib(Y_{S_b};\kappa)$ for $b=1,2$. Then for $F=[F_1,F_2]_{ext}$ and $i\in S_1$ the coordinate $F^{\{i\}}\colon I^{S_1\sm\{i\}}\times I^{S_2}\to Y_i$
    is homotopic relative to the boundary to the map
    \[
    (t_j)_{j\in S_1\sm\{i\}}\times (t_k)_{k\in S_2}
    \mapsto 
    [F_1^{\{i\}}((t_j)),*]^{\downarrow T_{S_2}}.
    \]
\end{lem}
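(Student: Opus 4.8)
The plan is to prove Lemma~\ref{lem:F-i} by induction on $n$, with Example~\ref{ex:cube-deg-2} providing (essentially) the base case $n=2$ after a change of parametrisation. The content of the lemma is that the coordinate $F^{\{i\}}$ of the external Samelson bracket $[F_1,F_2]_{\ext}$ only ``sees'' the $Y_i$-factor coordinate $F_1^{\{i\}}$ of $F_1$ (and none of $F_2$, since on the set $\{i\}$ the factor $F_2^\emptyset$ is constant), up to the explicit reparametrisation $t\mapsto \tfrac{3t-1}{2}$ on $[\tfrac13,1]$ and the canonical nullhomotopy on the 0-valued faces of the $I^{S_2}$-directions. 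First I would record, from the inductive description of $[-,-]_{\ext}$ in §\ref{subsec:ext-Sam-brackets}, that the coordinate $F^{\{i\}}$ is exactly the face of the iterated construction obtained by taking $T=\ul{n}\sm\{i\}$; this is spelled out in~\eqref{eq:F-i} and re-expounded in Example~\ref{ex:F-i}. So the map $F^{\{i\}}$ is: on the subcube $[\tfrac13,1]^{\ul n\sm\{i\}}$ the commutator $[F_1^{\{i\}}((\tfrac{3t_j-1}2)),*]$, which since the second slot is constant $*$ is just the reparametrised loop $F_1^{\{i\}}((\tfrac{3t_j-1}2))$ itself (the commutator $[\gamma,*]$ is $\gamma\gamma^{-1}$, canonically nullhomotopic, but as a based loop of $Y_i\vee *\simeq Y_i$ it is homotopic rel endpoints to $\gamma$); on the faces where some $t_k\le\tfrac13$ with $k\in S_2$ it is the canonical nullhomotopy $(\cdots)^{\downarrow 3T_{S_2}}$; and on the faces where some $t_j\le\tfrac13$ with $j\in S_1\sm\{i\}$ it is the trivial nullhomotopy, i.e.\ constant $*$.

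The second step is to produce the homotopy, rel boundary, from this map to the target map
\[
    (t_j)_{j\in S_1\sm\{i\}}\times (t_k)_{k\in S_2}\mapsto [F_1^{\{i\}}((t_j)),*]^{\downarrow T_{S_2}},
\]
which can be described as: apply the canonical nullhomotopy $\downarrow$ governed by $T_{S_2}=\min_{k\in S_2}t_k$ to the reparametrised loop $F_1^{\{i\}}((t_j))$, where now the reparametrisation of the $S_1\sm\{i\}$-cube is the identity (no $\tfrac{3t-1}2$ squeeze) and the $\downarrow 3T_{S_2}$ of the source is replaced by $\downarrow T_{S_2}$. There are two independent adjustments to make. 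First, on the $S_2$-directions: the source uses the crushed parameter $3T_{S_2}$ (active only on $[0,\tfrac13]$), the target uses $T_{S_2}\in[0,1]$; a linear reparametrisation of the $[0,1]$-interval in each $S_2$-coordinate, $t_k\mapsto \min(3t_k,1)$ run backwards, i.e.\ a straight-line homotopy of reparametrisations $s\mapsto \big((1-s)\cdot\min(3t_k,1)+s\cdot t_k\big)$, interpolates between them and fixes the boundary (it is the identity at $t_k\in\{0,1\}$, noting $\min(3\cdot 1,1)=1$ — more carefully one rescales so the endpoints match). Second, on the $S_1\sm\{i\}$-directions: replace the squeeze $t_j\mapsto\tfrac{3t_j-1}2$ on $[\tfrac13,1]$ together with the ``constant $*$ on $[0,\tfrac13]$'' by the full parametrisation $t_j\mapsto t_j$ on $[0,1]$; since the loop $F_1^{\{i\}}$ sends the $0$-valued faces of its source cube $I^{S_1\sm\{i\}}$ to $*$ (as $F_1\in\Omega\tofib(Y_{S_1};\kappa)$ is in the total homotopy fibre, its coordinate $F_1^{\{i\}}$ vanishes on $0$-valued faces), the two maps agree on $\partial I^{S_1\sm\{i\}}$ and a straight-line homotopy of the reparametrising self-maps of the cube does the job. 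Combining the two gives the desired homotopy rel boundary; the boundary of the full cube $I^{S_1\sm\{i\}}\times I^{S_2}$ is preserved because on $0$-valued faces everything is $*$ and on $1$-valued faces the reparametrisations are the identity.

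The main obstacle I anticipate is purely bookkeeping: keeping the two reparametrisations compatible on the mixed boundary faces (where some $t_j$ is in $[0,\tfrac13]$ \emph{and} some $t_k$ is in $[0,\tfrac13]$), so that the concatenated homotopy is genuinely rel the \emph{whole} boundary and continuous across the corners of the various regions ($[\tfrac13,1]^{\ul n\sm\{i\}}$ versus its complementary collar). The clean way to organise this is to phrase the entire construction functorially: $F^{\{i\}}$ is a composite $I^{\ul n\sm\{i\}}\xrightarrow{r}I^{\ul n\sm\{i\}}\xrightarrow{\widetilde F}Y_i$ where $r$ is an explicit (non-homeomorphic, collapsing) self-map of the cube and $\widetilde F$ is built from $F_1^{\{i\}}$ and the $\downarrow$-nullhomotopy applied in the $S_2$-directions; the target map is the same $\widetilde F$ precomposed with a different $r'$; and $r\simeq r'$ through maps of cubes fixing the boundary pointwise, by a straight-line homotopy, since both $r$ and $r'$ are the identity on $\partial I^{\ul n\sm\{i\}}$ and take values in the (convex) cube. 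Then $\widetilde F\circ(\text{homotopy})$ is the required homotopy rel boundary, and one never has to argue about corners by hand. I would carry out the $n=2$ check first to fix the normalisations (this is exactly the computation displayed in Example~\ref{ex:cube-deg-2}, where the four-piece commutator formula is homotoped to the two-piece formula and then, via Lemma~\ref{lem:square-htpy}, to $\theta\wedge t\mapsto F_1^{\{1\}}(t)\wedge F_2^{\{2\}}(\theta)$), and then observe that the inductive step adds only ``extra'' $I^{S_1\sm\{i\}}$-directions on which $F_1^{\{i\}}$ is already a map of cubes vanishing on $0$-faces, so the same straight-line homotopy of reparametrisations extends with no new phenomena.
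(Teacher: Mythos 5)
Your proposal is essentially the paper's own argument: the paper proves the lemma in one line by contracting, in the formula~\eqref{eq:F-i}, the outer region (which does not depend on the $t_k$'s) and the region that is constantly $*$, and this contraction is exactly the straight-line reparametrisation homotopy $\widetilde F\circ r\simeq\widetilde F$ that you spell out. The inductive framing and the aside that $[\gamma,\const_*]$ is homotopic to $\gamma$ are unnecessary (the argument is uniform in $n$), and be aware that your coordinatewise reparametrisations fix a $1$-valued face only in the coordinate equal to $1$, so—just as in the paper's contraction—the homotopy is genuinely constant only on the $0$-valued faces where everything is the basepoint; this is all that is used later, so it does not affect the substance.
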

\begin{proof}
    This follows from the formula~\eqref{eq:F-i} by contracting the first part (because it does not depend on $t_k$), and the last part (since it is constantly equal to $*$).
\end{proof}

\subsection{Total Samelson maps}
\label{subsec:total-Sam}

If we assume $Y_i=\Sigma X_i$ is a suspension for each $i=1,\dots,n$, then we can use the Hilton--Milnor equivalence~\eqref{eq:HM-all} to describe the homotopy type of the collapsing cube. Namely, if in $w\in\LL(n)$ each letter appears at least once, then the canonical Samelson map $w(x_i)=w(x_1,\dots,x_n)$ from \eqref{eq:can-Sam} extends canonically to a map $w(x_i)^\bull\colon w(X_i)\to\Omega\tofib\left(\Sigma X_{\ul{n}}; \kappa\right)$.

Namely, for such a word $w$ if the map $x_i$ is replaced by $\const_*$, then we can use the canonical nullhomotopy as in \eqref{eq:nullhtpy} to construct a homotopy $w(x_i)^{\ul{n}\sm\{i\}}\colon w(X_i)\times  I\to \Omega\Sigma X_{\ul{n}\sm\{i\}}$ from $\const_*$ to $w(x_k,*_i)\colon w(X_i)\to\Omega\Sigma X_{\ul{n}\sm\{i\}}$.
If two maps $x_i,x_k$ are replaced by $\const_*$ these paths can be related by a family $w(x_i)^{\{i,k\}}\colon w(X_i)\times I^2\to \Omega\Sigma X_{\ul{n}\sm\{i,k\}}$, and so on.
In fact, this is equivalent to the following construction that uses the external Samelson bracket from Definition~\ref{def:external-Samelson}.
\begin{defn}
    For $w\in\LL(n)$ we inductively define the \emph{total Samelson map}
\begin{equation}\label{eq:total-Samelson}
    w(x_i)^\bull\colon w(X_i)\to\Omega\tofib\left(\Sigma X_{\ul{n}}; \kappa\right),\quad w(x_i)^S\colon w(X_i)\times I^{\ul{n}\sm S}\to \Omega\Sigma X_S.
\end{equation}
    For the base, if $w=x^i$ define $w(x_i)^\bull\colon X_i\to\Omega\hofib(\Sigma X_i\to *)$ as the canonical map $w(x_i)^{\{i\}}=x_i\colon X_i\to \Omega\Sigma X_i$, together with the trivial nullhomotopy $w(x_i)^{\emptyset}=\const_*\colon X_i\times I\to \Omega *$. 
    
    Now take $w=[w_1,w_2]\in\LL(\ul{n})$ in which each letter $i\in\ul{n}$ appears, and $w_b\in\LL(S_b)$ for $S_1\cup S_2=\ul{n}$. Then define $w(x_i)^\bull$ by the commutative square:
    \[\begin{gathered}[b]
    \begin{tikzcd}[column sep=3cm]
        \Omega \tofib(\Sigma X_{S_1};\kappa)\wedge \Omega\tofib(\Sigma X_{S_2};\kappa)
            \rar{[-,-]_{\ext}}
        & \Omega \tofib(\Sigma X_{\ul{n}};\kappa)
        \\
        w_1(X_i)
            \wedge
        w_2(X_i)
            \rar{\cong}
            \uar{w_1(x_i)^\bull\wedge w_2(x_i)^\bull}
        & w(X_i)
            \uar[densely dashed,swap]{w(x_i)^\bull}.
    \end{tikzcd}\\[-\dp\strutbox]
    \end{gathered}\qedhere
    \]
\end{defn}
\begin{lem}\label{lem:total-vs-canonical}
     Projecting to the initial vertex we have $\proj\circ w(x_i)^\bull=w(x_i)$. In words, canonical Sameslon maps~\eqref{eq:can-Sam} are precisely projections of total Samelson maps to the initial vertex.
\end{lem}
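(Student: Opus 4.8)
The plan is to induct on the structure of the bracketed word $w\in\LL(n)$, mirroring the inductive definition of $w(x_i)^\bull$ via the external Samelson bracket. The key point is that the projection-to-the-initial-vertex map $\proj$ is \emph{compatible with the external and ordinary Samelson (commutator) brackets}, in the precise sense that the square
\[
\begin{tikzcd}[column sep=large]
    \Omega\tofib(\Sigma X_{S_1};\kappa)\wedge\Omega\tofib(\Sigma X_{S_2};\kappa)
        \rar{[-,-]_{\ext}}
        \dar[swap]{\proj\wedge\proj}
    & \Omega\tofib(\Sigma X_{\ul{n}};\kappa)
        \dar{\proj}
    \\
    \Omega\Sigma X_{S_1}\wedge\Omega\Sigma X_{S_2}
        \rar{[-,-]}
    & \Omega\Sigma X_{\ul{n}}
\end{tikzcd}
\]
commutes (here $[-,-]$ is the commutator bracket~\eqref{eq:commutator} postcomposed with the fold map identifying the repeated wedge factors, and $\Sigma X_{\ul{n}}=\bigvee_{i\in\ul{n}}\Sigma X_i$). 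This follows directly from the construction in §\ref{subsec:ext-Sam-brackets}: by item~(1) of that construction the initial coordinate is $F^{\ul{n}}=[F_1^{S_1},F_2^{S_2}]$, i.e.\ exactly the commutator loop of the two initial loops $\proj(F_1)=F_1^{S_1}$ and $\proj(F_2)=F_2^{S_2}$; the fold map in Definition~\ref{def:external-Samelson} is applied on all coordinates, in particular on the initial one, so it matches the fold used in defining $[-,-]$ on $\Omega\Sigma X_{\ul{n}}$.

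Granting this compatibility square, the base case is immediate: for $w=x^i$ the total Samelson map has initial coordinate $w(x_i)^{\{i\}}=x_i$ by definition, which is exactly $w(x_i)$ from~\eqref{eq:can-Sam}. For the inductive step, write $w=[w_1,w_2]$ with $w_b\in\LL(S_b)$ and $S_1\cup S_2=\ul{n}$. By the inductive hypothesis $\proj\circ w_b(x_i)^\bull=w_b(x_i)$ for $b=1,2$. Now stack the defining square of $w(x_i)^\bull$ on top of the compatibility square above and use naturality of $\proj$: going along the top and down the right gives $\proj\circ w(x_i)^\bull$, while going down the left and along the bottom gives, by the inductive hypothesis, $[-,-]\circ\big(w_1(x_i)\wedge w_2(x_i)\big)$ precomposed with the canonical homeomorphism $w_1(X_i)\wedge w_2(X_i)\cong w(X_i)$, which is by definition (Definition~\ref{def:Samelson}) precisely $w(x_i)$. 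This completes the induction.

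The only genuinely substantive point — and the place where one must be careful rather than invoke formalities — is verifying that square commutes \emph{on the nose} (not just up to homotopy), which amounts to checking that taking the initial coordinate of the assembled point $[F_1,F_2]_{\ext}$ forgets all the auxiliary nullhomotopy data of items~(2)–(6) and retains exactly $F^{\ul{n}}=[F_1^{S_1},F_2^{S_2}]$. This is visibly true from the construction, since $F^{\ul{n}}$ is defined before any nullhomotopies are introduced and is unaffected by the fold on lower faces; so I expect no real obstacle here, only the bookkeeping of matching the fold-then-commutator on $\Omega\Sigma X_{\ul{n}}$ with the commutator-then-fold appearing in Definition~\ref{def:external-Samelson}, which commute because both folds are the wedge-of-identities map and the commutator bracket is natural in the target wedge summands.
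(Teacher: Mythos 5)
Your proof is correct and is exactly the induction the paper has in mind (its proof is just "this follows easily by induction"): the base case is definitional, and the inductive step rests on the observation that the initial coordinate of $[F_1,F_2]_{\ext}$ is literally the commutator $[F_1^{S_1},F_2^{S_2}]$ of the initial coordinates, with the folds matching. Nothing further is needed.
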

\begin{proof}
    This follows easily by induction.
\end{proof}

In fact, $\pi_*\Omega\tofib\left(\Sigma X_{\ul{n}}; \kappa\right)$ is precisely the summand of $\pi_*(\Omega\Sigma X_{\ul{n}})$ which maps to zero by each of the collapse maps $\kappa$ (see Lemma~\ref{lem:deloop}).
Therefore, we have (see for example \cite{GooIII,GW}):
\begin{equation}\label{eq:HM}
        hm_{NB}=\prod hm_w\colon\;\prod_{w\in NB\LL(n)}\Omega\Sigma w(X_i)\xrightarrow{\sim}
            \Omega\tofib\left(\Sigma X_{\ul{n}}; \kappa\right),
\end{equation}
where $NB\LL(n)\subset B\LL(n)$ is the subset of the Lyndon basis consisting of those basic words $w$ which include each letter at least once, and $hm_w$ is the multiplicative extension of $w(x_i)^\bull$.


Recall from \eqref{eq:right-normed} that $B\Lie(n)=(B\LL(n))\cap\Lie(n)=NB\LL(n)\cap\{\text{length }=n\}$ consists of basic words $w_\sigma\in NB\LL(n)$ of length exactly $n$, and is parameterised by $\sigma\in\Sigma_{n-1}$. Thus, restricting \eqref{eq:HM} to this subset and using the Freudenthal Suspension Theorem gives isomorphisms
\[\begin{tikzcd}
    \bigoplus_{\sigma\in \Sigma_{n-1}} \pi_mQ w_\sigma(X_i)
    \cong \bigoplus_{\sigma\in \Sigma_{n-1}} \pi_m w_\sigma(X_i)
    \cong  \pi_m \prod_{w\in B\Lie(n)} \Omega\Sigma w(X_i)
    \rar{hm|}[swap]{\cong}
    & \pi_m \Omega\tofib\left(\Sigma X_{\ul{n}}; \kappa\right)
\end{tikzcd}
\]
for $0\leq m\leq (n+1)(c+1)-1$. Note that $\pi_mQZ$ is a stable homotopy group of a space $Z$.
\begin{cor}\label{cor:hm}
    If $X_i$ are $c$-connected for $c\geq0$, then
    \[
    \prod w_\sigma(x_i)^\bull\colon \prod_{\sigma\in\Sigma_{n-1}} w_\sigma(X_i)\ra
    \Omega\tofib(\Sigma X_{\ul{n}}; \kappa)
    \]
    is $(n+1)(c+1)$-connected. Moreover, homotopy groups of these spaces are trivial for $0\leq m\leq n(c+1)-1$ and we have $\pi_{n(c+1)} \Omega\tofib\left(\Sigma X_{\ul{n}}; \kappa\right)
    \cong \bigoplus_{(n-1)!} \bigotimes_{i=1}^n H_{c+1}(X_i)$.
\end{cor}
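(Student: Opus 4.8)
The plan is to obtain Corollary~\ref{cor:hm} as a direct consequence of the Hilton--Milnor decomposition~\eqref{eq:HM} together with the Freudenthal Suspension Theorem, tracking connectivities carefully. First I would recall that by~\eqref{eq:HM} the weak product $hm_{NB}=\prod_{w\in NB\LL(n)} hm_w$ is a homotopy equivalence $\prod_{w\in NB\LL(n)}\Omega\Sigma w(X_i)\xrightarrow{\sim}\Omega\tofib(\Sigma X_{\ul{n}};\kappa)$, where each $hm_w$ is the multiplicative extension of the total Samelson map $w(x_i)^\bull$. Since each $X_i$ is $c$-connected, each smash product $w(X_i)\simeq X_1\wedge\cdots\wedge X_n$ (up to permutation, with the word $w$ involving $\ell(w)=|w|$ letters counted with multiplicity) is $(\ell(w)(c+1)-1)$-connected; the key numerical point is that a word $w\in NB\LL(n)$ that is \emph{not} of length exactly $n$ must have length $\geq n+1$, because it contains each of the $n$ letters at least once and is not multilinear. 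Hence $\Omega\Sigma w(X_i)$ for such $w$ is $(\ell(w)(c+1)-2)$-connected with $\ell(w)\geq n+1$, so it is $((n+1)(c+1)-2)$-connected, i.e.\ $((n+1)(c+1)-1)$-connected once we pass to the suspension and take one more loop, or more precisely its homotopy vanishes through degree $(n+1)(c+1)-2$.

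Next I would isolate the factors indexed by $B\Lie(n)=NB\LL(n)\cap\{\ell=n\}$, which by~\eqref{eq:right-normed} are exactly the right-normed words $w_\sigma$ parameterised by $\sigma\in\Sigma_{n-1}$, so there are $(n-1)!$ of them; for these, $w_\sigma(X_i)\simeq X_1\wedge\cdots\wedge X_n$ is $(n(c+1)-1)$-connected. By the Freudenthal Suspension Theorem, the unit map $w_\sigma(X_i)\to\Omega\Sigma w_\sigma(X_i)$ is $(2n(c+1)-1)$-connected (roughly twice the connectivity), and in particular induces isomorphisms on $\pi_m$ for $m\leq 2n(c+1)-2$, which comfortably covers the range $m\leq(n+1)(c+1)-1$ in which we care (for $n\geq1$, $c\geq0$). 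Therefore the composite
\[
\prod_{\sigma\in\Sigma_{n-1}} w_\sigma(X_i)\ra
\prod_{\sigma\in\Sigma_{n-1}}\Omega\Sigma w_\sigma(X_i)\hra
\prod_{w\in NB\LL(n)}\Omega\Sigma w(X_i)\xrightarrow[\sim]{hm_{NB}}
\Omega\tofib(\Sigma X_{\ul{n}};\kappa)
\]
is, on homotopy groups, an isomorphism in degrees $m\leq(n+1)(c+1)-1$ and a surjection in degree $(n+1)(c+1)$: indeed the first map is an isomorphism on $\pi_m$ in that range, the inclusion of the sub-product omits only factors whose homotopy vanishes through degree $(n+1)(c+1)-2$ (hence affects $\pi_m$ only for $m\geq(n+1)(c+1)-1$, and then only via a quotient onto the relevant summand), and $hm_{NB}$ is an equivalence. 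This gives that $\prod w_\sigma(x_i)^\bull$ is $(n+1)(c+1)$-connected. Since each $w_\sigma(X_i)$ is $(n(c+1)-1)$-connected, so is the finite product, whence $\pi_m\Omega\tofib(\Sigma X_{\ul{n}};\kappa)=0$ for $m\leq n(c+1)-1$; and in the first nonvanishing degree $m=n(c+1)$ the Freudenthal/Hurewicz isomorphisms give $\pi_{n(c+1)}w_\sigma(X_i)\cong\pi_{n(c+1)}(X_1\wedge\cdots\wedge X_n)\cong\bigotimes_{i=1}^n H_{c+1}(X_i)$ by the Künneth theorem (each factor $(c+1)$-connected, so the smash is $n(c+1)$-connected with that bottom homology), summing to $\bigoplus_{(n-1)!}\bigotimes_{i=1}^n H_{c+1}(X_i)$.

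The step I expect to be the main obstacle is making the connectivity bookkeeping around the omitted Hilton--Milnor factors fully honest, in particular pinning down exactly why passing from $\prod_{w\in NB\LL(n)}$ to $\prod_{\sigma\in\Sigma_{n-1}}$ loses nothing through degree $(n+1)(c+1)-1$ and controls the surjectivity in degree $(n+1)(c+1)$. This requires the observation that every $w\in NB\LL(n)\setminus B\Lie(n)$ has $\ell(w)\geq n+1$ (so that $\Omega\Sigma w(X_i)$ is at least $((n+1)(c+1)-1)$-connected in the sense that $\pi_m$ vanishes for $m< (n+1)(c+1)-1$, using $c\geq0$) together with the fact that the remaining $\Omega\Sigma w_\sigma(X_i)$ for $\sigma\in\Sigma_{n-1}$ have homotopy agreeing with $w_\sigma(X_i)$ in the Freudenthal range, which for $n\geq2$ reaches at least $2n(c+1)-2\geq(n+1)(c+1)-1$. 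The remaining identifications (the Künneth computation of $\pi_{n(c+1)}$ of an $n$-fold smash of $(c+1)$-connected spaces, and the count $|B\Lie(n)|=(n-1)!$) are routine given §\ref{subsec:Lie}, and Lemma~\ref{lem:total-vs-canonical} ensures these homotopy classes are realised by the total Samelson maps $w_\sigma(x_i)^\bull$ rather than merely by abstract equivalences.
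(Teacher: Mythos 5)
Your proposal follows exactly the paper's route: restrict the Hilton--Milnor equivalence \eqref{eq:HM} to the multilinear basic words $B\Lie(n)$, note that all omitted words have length at least $n+1$ so their factors are too highly connected to matter in the stated range, apply the Freudenthal Suspension Theorem to replace $\Omega\Sigma w_\sigma(X_i)$ by $w_\sigma(X_i)$, and finish the bottom homotopy group with Hurewicz and K\"unneth, with Lemma~\ref{lem:total-vs-canonical} identifying the resulting equivalence with $\prod w_\sigma(x_i)^\bull$. This matches the paper's (very brief) argument, and your extra bookkeeping around the omitted factors and the Freudenthal range is consistent with it.
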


\begin{ex}\label{ex:Lie}
    In particular, if $X_i=\S^{d-2}$ the first nonvanishing homotopy group is
    \[
        \pi_{n(d-2)}\Omega\tofib\left(\S^{d-1}_{\ul{n}}; \kappa\right)
        \cong\bigoplus_{(n-1)!}\Z\cong\Lie(n).\qedhere
    \]
\end{ex}

\section{Improvements}
\label{sec:improve}

The total homotopy fibre in Corollary~\ref{cor:hm} has a natural action by the symmetric group $\Sigma_n$ (permuting the $X_i$'s), but this is lost under the Hilton--Milnor equivalence -- the set of basic words is not closed under the action of $\Sigma_n$. Nevertheless, a suitable $\Sigma_n$-model was found by Johnson~\cite{Johnson}, reinterpreted by Arone--Kankaanrinta~\cite{AK-Snaith} and Arone--Mahowald~\cite{Arone-Mahowald}. 
In §\ref{subsec:J-collapsing} we give a new proof. 
This is based on the key Theorem~\ref{thm:J} in §\ref{subsec:J} that defines Johnson's map $J$ directly in terms of the Goodwillie partition complex, and then shows its compatibility with the grafting bracket from Definition~\ref{def:graft-bracket} and the external Samelson bracket from Definition~\ref{def:external-Samelson}.
Finally, in §\ref{subsec:J-general} we generalise these results to a setting where not all wedge factors are suspensions.

\subsection{The map J}
\label{subsec:J}

Let $F=\{F^S\colon I^{\ul{n}\sm S}\to Y_S\}$ be a point in $\tofib\left(Y_{\ul{n}}; \kappa\right)$ for some spaces $Y_i$, $1\leq i\leq n$. In particular, we have maps $F^{\{i\}}\colon I^{\ul{n}\sm\{i\}}\to Y_i$.
    For a weighted tree $(\Gamma,l)\in\GPC_{\ul{n}}$ we define
    \[
    J(F)(\Gamma,l)
    \coloneqq
        \bigwedge_{i\in\ul{n}} F^{\{i\}}((T_{ih})_{h\in\ul{n}\sm\{i\}})\quad\in \bigwedge_{i\in\ul{n}} Y_i
    \]
    where $T_{ih}=T_{hi}$ is the distance in $(\Gamma,l)$ to the root from the first common descendant $v_{ih}$ in $\Gamma$ of the leaves $i$ and $h$. Equivalently, $1-T_{ih}$ is the distance from the leaf $i$ to the vertex $v_{ih}$.

\begin{thm}\label{thm:J}
    For any collection $\{Y_i\}_{i\in\ul{n}}$ this defines a $\Sigma_n$-equivariant map
    \[
        J\colon\tofib\left(Y_{\ul{n}}; \kappa\right)
        \ra \Map_*(\GPC_{\ul{n}}, \bigwedge_{i\in\ul{n}} Y_i),
    \]
    such that for any two sets $S_1,S_2$ with $S_1\sqcup S_2=\ul{n}$ the following square commutes up to homotopy:
    \begin{equation}\label{eq:Lie-vs-ext}
    \begin{tikzcd}[column sep=large]
        \Omega \Map_*(\GPC_{S_1},\bigwedge_{i\in S_1}Y_i)\wedge \Omega \Map_*(\GPC_{S_2},\bigwedge_{i\in S_2}Y_i)
            \rar{[-,-]_{\graft}}
        & \Omega \Map_*(\GPC_{\ul{n}},\bigwedge_{i\in \ul{n}}Y_i)
        \\
        \Omega \tofib(Y_{S_1};\kappa)\wedge \Omega \tofib(Y_{S_2};\kappa)
            \rar{[-,-]_{\ext}}
            \uar{\Omega J_{S_1}\wedge \Omega J_{S_2}}
        & \Omega \tofib(Y_{\ul{n}};\kappa)
            \uar[swap]{\Omega J_{\ul{n}}}.
    \end{tikzcd}
    \end{equation}
\end{thm}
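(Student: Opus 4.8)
The plan is to first check that $J$ is a well-defined continuous $\Sigma_n$-equivariant pointed map, and then, for the square \eqref{eq:Lie-vs-ext}, to unwind both composites into explicit formulas on weighted trees and compare them, using throughout the identifications $\Omega\tofib(-;-)\cong\tofib(\Omega-;\Omega-)$ and $\Omega\Map_*(\GPC,-)\cong\Map_*(\GPC,\Omega-)$.

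For well-definedness: each $T_{ih}\in[0,1]$ is a distance in a tree of total height $1$; the tuple $(T_{ih})_h$ depends continuously on $(\Gamma,l)\in\GPC_{\ul n}$ and is unchanged under contracting internal length-$0$ edges; and $J$ is $\Sigma_n$-equivariant because a permutation of $\ul n$ permutes the smash factors of $\bigwedge_iY_i$, the distances $T_{ih}$, and the coordinates $F^{\{i\}}$ compatibly. The one point that needs an argument is that $J(F)$ is pointed. Since $F$ lies in the total homotopy fibre, each $F^{\{i\}}$ is pointed on every $0$-valued face of $I^{\ul n\setminus\{i\}}$, so $J(F)(\Gamma,l)=*$ as soon as some $T_{ih}=0$; in particular, if the root edge of $(\Gamma,l)$ has length $0$ one takes $i,h$ in different subtrees of the vertex just above the root, giving $T_{ih}=l(e_0)=0$. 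If instead some leaf edge has length $0$, its attaching vertex $w$ lies at distance $1$ from the root, hence every other edge out of $w$ is also a length-$0$ leaf edge; writing $L_w\subseteq\ul n$ for those leaves (so $|L_w|\ge2$), on the face $\{t_h=1:h\in L_w\setminus\{i\}\}$ the compatibility \eqref{eq:tofib-pt} of $F$ identifies $F^{\{i\}}$ with $\kappa^{L_w}_{\{i\}}$ applied to $F^{L_w}$, and since a point of the wedge $\bigvee_{k\in L_w}Y_k$ lies in at most one summand, this is the basepoint for all but at most one $i\in L_w$, so $J(F)(\Gamma,l)=*$ again.

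For the square, fix $F_b\in\Omega\tofib(Y_{S_b};\kappa)$ and abbreviate $\Psi_b\coloneqq\Omega J_{S_b}(F_b)$, so that $\Psi_b(\Gamma_b,l_b)$ is the loop $\theta\mapsto\bigwedge_{i\in S_b}F_b^{\{i\}}((T^{\Gamma_b}_{ij})_j)(\theta)$, with $T^{\Gamma_b}_{ij}$ the tree distances in $(\Gamma_b,l_b)$. By Lemma~\ref{lem:square-htpy}, the composite $[-,-]_{\graft}\circ(\Omega J_{S_1}\wedge\Omega J_{S_2})$ sends $(F_1,F_2)$ to the loop $\theta\mapsto\big[([\Gamma_1,\Gamma_2],l)\mapsto\Psi_1(\Gamma_1,l_1)(l(e_0))\wedge\Psi_2(\Gamma_2,l_2)(\theta)\big]$, which is pointed on every tree of $\GPC_{\ul n}$ not of the form $([\Gamma_1,\Gamma_2],l)$ with $\{S(\Gamma_1),S(\Gamma_2)\}=\{S_1,S_2\}$, because $[-,-]_{\graft}$ is defined by precomposition with $\ungraft_{S_1,S_2}$. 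For the composite $\Omega J_{\ul n}\circ[-,-]_{\ext}$ I would use Lemma~\ref{lem:F-i} together with its analogues on higher faces to identify, up to the canonical homotopies of the external-bracket construction, the $\{i\}$-coordinate of $[F_1,F_2]_{\ext}$ for $i\in S_1$ with $(t_j)_j\mapsto\big(F_1^{\{i\}}((t_j)_{j\in S_1})\big)^{\downarrow\,\min_{k\in S_2}t_k}$ in the notation of \eqref{eq:nullhtpy} (and symmetrically for $i\in S_2$), so that $\Omega J_{\ul n}([F_1,F_2]_{\ext})(\Gamma,l)$ is, up to homotopy, $\bigwedge_{i\in S_1}\big(F_1^{\{i\}}((T_{ij})_{j\in S_1})\big)^{\downarrow\,\min_{k\in S_2}T_{ik}}$ smashed with the corresponding wedge over $S_2$. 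The crux is then the observation that on a grafting tree $([\Gamma_1,\Gamma_2],l)$ one has $\min_{k\in S_2}T_{ik}=l(e_0)=\min_{k\in S_1}T_{ik}$ for every $i$, and $T_{ij}=l(e_0)+(1-l(e_0))\,T^{\Gamma_b}_{ij}$ for $i,j\in S_b$; i.e.\ the coordinates $(T_{ih})$ read off from $(\Gamma,l)$ are exactly the output of $\ungraft_{S_1,S_2}$. Consequently, after absorbing the affine rescaling, the homeomorphism $\phi$ from Definition~\ref{def:graft-bracket}, and the reparameterisations built into $(\cdot)^{\downarrow(\cdot)}$, this expression turns into $\Psi_1(\Gamma_1,l_1)(l(e_0))\wedge\Psi_2(\Gamma_2,l_2)(\theta)$; while off the grafting locus the padding by canonical nullhomotopies in $[F_1,F_2]_{\ext}$ makes $\Omega J_{\ul n}([F_1,F_2]_{\ext})$ nullhomotopic there relative to the grafting locus, so that the whole map is homotopic to one factoring through $\ungraft_{S_1,S_2}$ --- the required homotopy being produced by a deformation of $\GPC_{\ul n}$ in the spirit of the contraction of $N\Pi_{\ul n}\setminus L$ used in the proof of Theorem~\ref{thm:Robinson}.

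I expect the main obstacle to be this last step: organising the three independent reparameterisations --- the affine rescaling $T_{ij}=l(e_0)+(1-l(e_0))T^{\Gamma_b}_{ij}$ of subtree distances, the homeomorphism $\phi$ of the grafting bracket, and the canonical nullhomotopies $(\cdot)^{\downarrow(\cdot)}$ of the external bracket --- into one homotopy that is coherent over all cells of $\GPC_{\ul n}$ at once, and simultaneously checking that it trivialises the contributions of the non-grafting trees. Because the $\{i\}$-coordinates of $[F_1,F_2]_{\ext}$ are genuinely nonzero on a general weighted tree, the vanishing off the grafting locus holds only up to homotopy, which is exactly why \eqref{eq:Lie-vs-ext} commutes up to homotopy rather than strictly.
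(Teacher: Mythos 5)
The first two thirds of your proposal track the paper's own proof closely: the well-definedness and equivariance checks, the use of Lemma~\ref{lem:square-htpy} to reduce the grafting bracket to the asymmetric form, the use of Lemma~\ref{lem:F-i} (and its $S_2$-analogue) to simplify the $\{i\}$-coordinates of $[F_1,F_2]_{\ext}$, and the relations $T_{ih}=l(e_0)$ for $i,h$ in different blocks and $T_{ih}=l(e_0)+(1-l(e_0))T^{\Gamma_b}_{ih}$ for same-block pairs are exactly the ingredients the paper uses. The genuine gap is in the last step, which you yourself flag: you assert that off the grafting locus the map $\Omega J_{\ul{n}}([F_1,F_2]_{\ext})$ becomes nullhomotopic relative to the grafting locus, and propose to produce the required homotopy by ``a deformation of $\GPC_{\ul{n}}$ in the spirit of the contraction of $N\Pi_{\ul{n}}\sm L$''. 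That is not an argument, and the proposed tool is the wrong one: that contraction is non-equivariant, bears no relation to the decomposition $\ul{n}=S_1\sqcup S_2$, and there is no evident deformation of $\GPC_{\ul{n}}$ onto the (not even closed) grafting locus; moreover the vanishing you need does not come from the topology of $\GPC_{\ul{n}}$ alone but from the structure of the external bracket.

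What you are missing is that no locus-by-locus or cell-by-cell bookkeeping is needed. Read the grafted expression globally, with the rescaled same-block coordinates $\tfrac{T_{ih}-T_0}{1-T_0}$, $T_0=l(e_0)$: on any weighted tree that is \emph{not} a grafting of an $S_1$-tree onto an $S_2$-tree there exist $i,h$ in the same block $S_b$ lying in different branches above the top vertex of the root edge (otherwise each $S_b$ sits in a single branch and the tree would be such a grafting), so $T_{ih}=T_0$, the rescaled coordinate is $0$, the loop $F_b^{\{i\}}$ is evaluated on a $0$-valued face and is constant at the basepoint, and the whole smash is the basepoint --- i.e.\ this one formula already agrees with $[\Omega J_{S_1}(F_1),\Omega J_{S_2}(F_2)]_{\graft}$ on all of $\GPC_{\ul{n}}$. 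The bridge to the expression you obtained from Lemma~\ref{lem:F-i} and the commutator collapse of Example~\ref{ex:cube-deg-2} is then the single explicit rescaling homotopy $T_{ih}\mapsto\tfrac{T_{ih}-sT_0}{1-sT_0}$, $s\in[0,1]$ (shrinking the root edge), which is a closed formula on all of $\S^1\wedge\GPC_{\ul{n}}$ at once: at $s=0$ it is the unrescaled expression, at $s=1$ it is the global grafted formula, and at $s=1$ it automatically kills the non-grafting contributions because some same-block coordinate becomes $0$ there. So the three reparameterisations you worry about (rel-boundary coordinate homotopies, the loop reparameterisation of Example~\ref{ex:cube-deg-2}, and the affine rescaling) never have to be organised over cells of $\GPC_{\ul{n}}$; each is given by one formula natural in the weighted tree, which is precisely how the paper completes the proof.
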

\begin{proof}
     We first check that for $F\in\tofib\left(Y_{\ul{n}}; \kappa\right)$ the map $J(F)\colon \GPC_{\ul{n}}\to\bigwedge_{i\in \ul{n}}Y_i$ 
    is well defined. Indeed, if a root edge has length zero, then some $T_{ih}=0$ and $F^{\{i\}}=*$ on that 0-valued face, whereas if from a leaf $i$ grows an edge of zero length then there is a leaf $h$ for which $T_{ih}=1$, and on this 1-valued face 
    the map $F^{\{i\}}\wedge F^{\{h\}}$ is $*$ (since here $F^{\{i\}}\times F^{\{h\}}$ factors through the map $F^{\{ih\}}$ into the wedge).

     The map $J\colon\tofib(Y_{\ul{n}}; \kappa)\to\Map_*(\GPC_{\ul{n}}, \bigwedge_{i\in\ul{n}} Y_i)$ is $\Sigma_n$-equivariant: exchanging $Y_i$ and $Y_j$ in the cube, and also exchanging the leaves $i$ and $h$ means that the value $F^{\{i\}}((T_{ih})_{h\in\ul{n}\sm\{i\}})$ gets exchanged with $F^{\{h\}}((T_{hi})_{i\in\ul{n}\sm\{h\}})$ in $\bigwedge_{i\in\ul{n}} Y_i$. That is, we have $J(\sigma F)(\sigma(\Gamma,l))=\sigma(J(F)(\Gamma,l))$. 

    Now let $F_b=\{F_b^{T_b}\}_{T_b\subseteq S_b}\in \Omega\tofib(Y_{S_b};\kappa)$ for $b=1,2$ and let $F=[F_1,F_2]_{\ext}$. Pick a weighted tree $(\Gamma,l)\in\GPC_{\ul{n}}$ with the leaf set $S(w)=\ul{n}$. 
    To prove that~\eqref{eq:Lie-vs-ext} commutes we need to show that 
    \begin{equation}\label{eq:to-do}
        \Omega J_{\ul{n}}(F)(\Gamma,l)=[\Omega J_{S_1}(F_1),\Omega J_{S_2}(F_2)]_{\graft}(\Gamma,l).
    \end{equation}
    
    Recall from Definition~\ref{def:graft-bracket} that the grafting bracket uses the ungrafting map $\ungraft_{S_1,S_2}$ from~\eqref{eq:cooperad}, that sends a weighted tree with $n$ leaves to the basepoint unless it is of the shape $\Gamma=[\Gamma_1,\Gamma_2]$ for trees with leaf sets $S_1$ and $S_2$ in some order; in the latter cases ungrafting outputs the tuple of $T_0=l(e_0)\in\S^1$ and two weighted trees $(\Gamma_b,l_b)$, with lengths of edges $l_b(e)=\tfrac{l(e)}{1-T_0}$. This implies that if $i,h\in S_b$ then the length from $v_{ih}$ to the root in the ungrafted tree is $\tfrac{T_{ih}-T_0}{1-T_0}$.

    Thus, $\psi_b(\Gamma_b,l_b)\coloneqq \Omega J_{S_b}(F_b)=(\theta\mapsto\bigwedge_{i\in S_b} F_b^{\{i\}}((\tfrac{T_{ih}-T_0}{1-T_0})_{h\in S_b\sm\{i\}})(\theta))$, and Lemma~\ref{lem:square-htpy} implies that when the leaf sets are $S_1$ and $S_2$, the right hand side of~\eqref{eq:to-do} is a map $\S^1\wedge\GPC_{\ul{n}}\to \bigwedge_iY_i$ given by 
    \begin{equation}\label{eq:rhs}
        \theta\wedge ([\Gamma_1,\Gamma_2],l) \mapsto 
        \bigwedge_{i\in S_1}
                F_1^{\{i\}}((\tfrac{T_{ih}-T_0}{1-T_0})_{h\in S_1\sm\{i\}})
                (T_0)
            \wedge 
            \displaystyle\bigwedge_{i\in S_2}
                F_2^{\{i\}}((\tfrac{T_{ih}-T_0}{1-T_0})_{h\in S_2\sm\{i\}})
                (\theta).
    \end{equation}
    Moreover, we claim that this formula holds for all $(\Gamma,l)$, even if the leaf sets are not $S_1$ and $S_2$. Namely, then for at least one $b$ there exist $i,h\in S_b$ so that $i,h$ are not both in $S(\Gamma_1)$ or $S(\Gamma_2)$. Hence $T_{ih}=T_0$, so the term $F_b^{\{i\}}$ is evaluated on a 0-valued face, and is thus the constant loop at $*$.
    Therefore, we need to show that the map~\eqref{eq:rhs} is homotopic to the left hand side of~\eqref{eq:to-do}, given by:
    \[
        \theta\wedge ([\Gamma_1,\Gamma_2],l)\mapsto 
        \bigwedge_{i\in \ul{n}}F^{\{i\}}((T_{ih})_{h\in \ul{n}\sm\{i\}})(\theta).
    \]
    Using Lemma~\ref{lem:F-i} for both cases $i\in S_1$ and $i\in S_2$, the last displayed map is homotopic to
    \[\theta\wedge ([\Gamma_1,\Gamma_2],l) \mapsto
        \bigwedge_{i\in S_1} 
            [F_1^{\{i\}}((T_{ih})_{h\in S_1\sm\{i\}}),*]^{\downarrow T_{iS_2}}(\theta)
        \wedge 
        \bigwedge_{i\in S_2}
            [*,F_2^{\{i\}}((T_{ih})_{h\in S_2\sm\{i\}})]^{\downarrow T_{iS_1}}(\theta).
    \]
    Note that $T_{ih}=T_0$ for every $i\in S_1$, $h\in S_2$, so we have $T_{iS_b}\coloneq\min_{h\in S_b}T_{ih}=T_0$.
    Whence, arguing similarly as in Example~\ref{ex:cube-deg-2}, the last displayed map is homotopic to
    \begin{equation}\label{eq:lhs}
    \theta\wedge ([\Gamma_1,\Gamma_2],l) \mapsto
        \bigwedge_{i\in S_1} 
            F_1^{\{i\}}((T_{ih})_{h\in S_1\sm\{i\}})(T_0)
        \wedge 
        \bigwedge_{i\in S_2}
            F_2^{\{i\}}((T_{ih})_{h\in S_2\sm\{i\}})(\theta).
    \end{equation}
    Finally, we claim that~\eqref{eq:lhs} is homotopic to~\eqref{eq:rhs}. Namely, there is a rescaling homotopy on $\GPC_{\ul{n}}$ that for $s\in[0,1]$ sends $([\Gamma_1,\Gamma_2],l)$ to the same weighted tree except that the root edge has length $sT_0$ and the remaining edges are appropriately rescaled. This means that each $T_{ih}$ gets rescaled to $\tfrac{T_{ih}-sT_0}{1-sT_0}$. Composing with the maps $F_b^{\{i\}}$ gives~\eqref{eq:lhs} for $s=0$, and~\eqref{eq:rhs} for $s=1$, as desired.
\end{proof}

\begin{rem}
    The preceding proof should be compared to~\cite[Prop.6.8]{Johnson}. 
\end{rem}

\subsection{Collapsing cubes of suspensions}
\label{subsec:J-collapsing}

We now specialise to the case $Y_i=\Sigma X_i$.
\begin{thm}\label{thm:J-collapsing}
    For any collection $\{X_i\}_{i\in\ul{n}}$ the following composite is homotopic to the canonical map:
    \[
        \prod_{\sigma\in \Sigma_{n-1}} w_\sigma(X_i)
            \xlongrightarrow{\prod w_\sigma(x_i)^\bull}
        \Omega\tofib\left( \Sigma X_{\ul{n}}; \kappa\right)
            \xlongrightarrow{\Omega J}
        \Omega\Map_*(\GPC_{\ul{n}}, \bigwedge_{i\in\ul{n}} \Sigma X_i)
            \xlongrightarrow[\sim]{\prod -\circ i_\sigma}
        \prod_{\sigma\in \Sigma_{n-1}}\Omega^n\Sigma^n w_\sigma(X_i),
    \]
    where $i_\sigma\colon\Delta^{n-1}\to\GPC_{\ul{n}}$ sends $(0\leq \theta_1\leq\dots\leq \theta_{n-1}\leq 1)$
    to the weighted tree in which the internal vertex adjacent to the leaf $\sigma(i)$ has distance $\theta_{\sigma(i)}$ to the root.
\end{thm}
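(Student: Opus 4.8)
\emph{Proof strategy.} The plan is to first reduce the statement to ``matrix coefficients'' indexed by pairs $(\sigma,\tau)\in\Sigma_{n-1}\times\Sigma_{n-1}$, and then to establish these by induction on $n$ using Theorem~\ref{thm:J}.

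For the reduction, note that by construction $\prod w_\sigma(x_i)^\bull$ is the (finite) product of the total Samelson maps $w_\sigma(x_i)^\bull$ formed using the loop concatenation of $\Omega\tofib(\Sigma X_{\ul{n}};\kappa)$ --- it is the Hilton--Milnor map~\eqref{eq:HM} restricted to the weight-$n$ basic words and precomposed with the units $w_\sigma(X_i)\hra\Omega\Sigma w_\sigma(X_i)$. Since $\Omega J$ is ``apply $J$ to each loop'', and $-\circ i_\tau$ is obtained by looping a fixed map (well defined once the splitting $\GPC_{\ul{n}}\simeq\bigvee_\tau\S^{n-1}$ of Theorem~\ref{thm:Robinson} is used), both carry loop concatenation to loop concatenation. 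Hence the $\tau$-th component of the composite in question equals the concatenation over $\sigma\in\Sigma_{n-1}$ of the maps $\Phi^\sigma_\tau\coloneqq(-\circ i_\tau)\circ\Omega J\circ w_\sigma(x_i)^\bull\colon w_\sigma(X_i)\to\Omega^n\Sigma^n w_\tau(X_i)$. Using the homotopy unit laws of this loop space, the theorem reduces to: \textup{(a)}~$\Phi^\sigma_\tau\simeq\const$ for $\tau\neq\sigma$, and \textup{(b)}~$\Phi^\sigma_\sigma$ is homotopic to the adjunction unit $w_\sigma(X_i)\to\Omega^n\Sigma^n w_\sigma(X_i)$; for then the $\tau$-th component of the composite is homotopic to $(\text{unit})\circ(\text{projection to the }\tau\text{-th factor})$, which is the $\tau$-th component of the canonical map.

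I would prove \textup{(a)} and \textup{(b)} together by induction on $n$. The base $n\le2$ is a direct check: for $n=2$ there is a unique $\sigma$, and by Lemma~\ref{lem:square-htpy} the restriction of $\Omega J\circ[x_1^\bull,x_2^\bull]_{\ext}=[x_1,x_2]_{\graft}$ along $i_\sigma\colon\Delta^1=\{0\le\theta_1\le1\}\to\GPC_2=\S^1$, $\theta_1\mapsto\theta_1$, sends $z_1\wedge z_2$ to the loop $(\theta_1,\theta)\mapsto(\theta_1\wedge z_1)\wedge(\theta\wedge z_2)$, which is exactly the adjunction unit into $\Omega^2\Sigma^2(X_1\wedge X_2)$. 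For the inductive step, write $w_\sigma=[x^{\sigma(1)},w_{\sigma'}]$ with $w_{\sigma'}$ the right-normed word on $S_2\coloneqq\ul{n}\sm\{\sigma(1)\}$, so that $w_\sigma(x_i)^\bull=[-,-]_{\ext}\circ(x_{\sigma(1)}^\bull\wedge w_{\sigma'}(x_i)^\bull)$ by the definition of total Samelson maps. Theorem~\ref{thm:J} applied to the decomposition $\{\sigma(1)\}\sqcup S_2=\ul{n}$ then gives $\Omega J\circ w_\sigma(x_i)^\bull\simeq[\,x_{\sigma(1)}\,,\,\Omega J_{S_2}\circ w_{\sigma'}(x_i)^\bull\,]_{\graft}$, where one uses the degenerate conventions at a one-element set (there $\GPC_{\{\sigma(1)\}}=\S^0$ and $J_{\{\sigma(1)\}}=\mathrm{id}$, so $\Omega J_{\{\sigma(1)\}}\circ x_{\sigma(1)}^\bull=x_{\sigma(1)}$). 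By the combinatorics of $\ungraft_{\{\sigma(1)\},S_2}$, this map is constant on a right-normed $w_\tau$-tree unless the root vertex of that tree is binary with a single-leaf branch $\{\sigma(1)\}$, i.e.\ unless $\tau(1)=\sigma(1)$; this proves \textup{(a)} whenever $\tau(1)\neq\sigma(1)$. If $\tau(1)=\sigma(1)$, the tree $i_\tau(0\le\theta_1\le\cdots\le\theta_{n-1}\le1)$ ungrafts, with root-edge length $T_0=\theta_1$, into the leaf $\sigma(1)$ and the $w_{\tau'}$-tree for $S_2$ with inner vertices at the rescaled heights $\tfrac{\theta_k-\theta_1}{1-\theta_1}$; substituting into the formula of Lemma~\ref{lem:square-htpy} shows that, up to homotopy,
\[
\Phi^\sigma_\tau(z_1\wedge z')\ =\ x_{\sigma(1)}(z_1)(\theta_1)\ \wedge\ \Phi^{\sigma'}_{\tau'}(z')\big(\tfrac{\theta_2-\theta_1}{1-\theta_1},\dots,\tfrac{\theta_{n-1}-\theta_1}{1-\theta_1};\theta\big)
\]
under $w_\sigma(X_i)\cong X_{\sigma(1)}\wedge w_{\sigma'}(X_i)$, where $\Phi^{\sigma'}_{\tau'}$ is the analogous map for the collection indexed by $S_2$. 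By the inductive hypothesis $\Phi^{\sigma'}_{\tau'}$ is nullhomotopic when $\tau'\neq\sigma'$, equivalently (since $\tau(1)=\sigma(1)$) when $\tau\neq\sigma$, whence $\Phi^\sigma_\tau$ is nullhomotopic, completing \textup{(a)}.

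For $\tau=\sigma$ the inductive hypothesis identifies $\Phi^{\sigma'}_{\sigma'}$ with the adjunction unit of $w_{\sigma'}$, so $\Phi^\sigma_\sigma$ is homotopic to $z_1\wedge z'\mapsto x_{\sigma(1)}(z_1)(\theta_1)\wedge\big(z'\wedge(\tfrac{\theta_2-\theta_1}{1-\theta_1},\dots,\tfrac{\theta_{n-1}-\theta_1}{1-\theta_1},\theta)\big)$. One first straightens the rescaling $(\theta_k)\mapsto(\tfrac{\theta_k-\theta_1}{1-\theta_1})$ to the identity by an explicit homotopy of $\Delta^{n-1}$ rel the faces $\{\theta_1=0\}$, $\{\theta_{n-1}=1\}$ and $\{\theta_k=\theta_{k+1}\}$ --- the multivariable analogue of the root-edge rescaling at the end of the proof of Theorem~\ref{thm:J} --- after which the map reads $z_1\wedge z'\mapsto(\theta_1\wedge z_1)\wedge\big(z'\wedge(\theta_2,\dots,\theta_{n-1},\theta)\big)$, and regrouping the $n$ coordinates $\theta_1,\dots,\theta_{n-1},\theta$ via the canonical homeomorphism $(\Delta^{n-1}/\partial)\wedge\S^1\cong\S^n$ exhibits it as the unit adjoint to $\mathrm{id}_{\Sigma^n w_\sigma(X_i)}$. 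This gives \textup{(b)} and completes the induction. I expect this last step --- carrying out the reparametrization of the simplex coordinates together with the loop coordinate so that all boundary identifications are respected and the outcome is genuinely the adjunction unit, rather than an a~priori unknown self-equivalence of $\Omega^n\Sigma^n w_\sigma(X_i)$ --- to be the only part requiring real care; the reduction is formal, and the cases $\tau\neq\sigma$ follow directly from the ungrafting combinatorics and Theorem~\ref{thm:J}.
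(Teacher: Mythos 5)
Your proposal is correct and follows essentially the same route as the paper: the same induction on $n$ via the right-normed decomposition $w_\sigma=[x^{\sigma(1)},w_{\sigma'}]$, converting the external bracket into the grafting bracket by Theorem~\ref{thm:J}, evaluating along $i_\sigma$ using Lemma~\ref{lem:square-htpy} and the ungrafting combinatorics, and finishing with the rescaling homotopy of the simplex coordinates. The only difference is that you make explicit the reduction to matrix coefficients and the off-diagonal vanishing $\Phi^\sigma_\tau\simeq\const$ for $\tau\neq\sigma$, which the paper leaves implicit (its proof only computes $G_{w_\sigma}\circ i_\sigma$).
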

\begin{proof} 
    Fix $\wedge_i z_i\in w(X_i)$ and denote $G_w\coloneqq\Omega J \big(w(x_i)^\bull(\wedge_iz_i)\big)\colon\GPC_{\ul{n}}\to\Omega\bigwedge_{i\in\ul{n}}\Sigma X_i$ for a Lie word $w$.
    By its definition in~\eqref{eq:total-Samelson} the total Samelson map $w(x_i)^\bull=[w_1(x_i)^\bull,w_2(x_i)^\bull]_{\ext}$ is an external bracket, so Theorem~\ref{thm:J} implies 
    $G_w=\big[G_{w_1},\, G_{w_2}\big]_{\graft}$. 
    By the definition of the grafting bracket and Lemma~\ref{lem:square-htpy}, for a weighted tree $(\Gamma,l)$ with $\Gamma=[\Gamma_1,\Gamma_2]$ and the root edge $e_0$ we have
    \[
    G_w(\Gamma,l)(\theta)
        =G_{w_1}(\Gamma_1,l_1)(l(e_0))\wedge G_{w_2}(\Gamma_2,l_2)(\theta).
        \,
    \]
    We need to prove that for $(\Gamma,l)=i_\sigma(\theta_1,\dots, \theta_{n-1})$ this is homotopic to the canonical map
    \[
        (\theta_1,\dots,\theta_{n-1},\theta)\mapsto 
        \Big(\bigwedge_{i=1}^{n-1} (\theta_i\wedge z_i)\Big)
        \wedge (\theta\wedge z_n).
    \]
    We do this by induction on $n\geq1$. 
    For the base case $n=1$ we have $\GPC_1=*$ and the map from the statement is 
    \[
        X_1\ra\Omega\hofib(\Sigma X_1\to *)\ra\Omega\Map_*(*,\Sigma X_1)\simeq\Omega\Sigma X_1,
    \]
    given by $z_1\mapsto (x_{\sigma1}(z_{\sigma1}); \const_*)\mapsto x_{\sigma1}(z_{\sigma1})=(\theta\mapsto \theta\wedge z_1)$, as desired. 
    
    Now fix $w_\sigma\in B\Lie(n)$ for $\sigma\in\Sigma_{n-1}$ as in \eqref{eq:right-normed} and  $(\Gamma,l)=i_\sigma(\theta_1,\dots, \theta_{n-1})$. 
    Thus, $\Gamma_1$ has one leaf, $l(e_0)=\theta_{\sigma1}$ and $(\Gamma_2,l_2)=i_{\sigma|_{S_2}}(\tfrac{\theta_i-\theta_{\sigma1}}{1-\theta_{\sigma1}})$. Then the induction hypothesis implies
    \begin{align*}
        G_{w_\sigma}(i_\sigma(\theta_1,\dots,\theta_{n-1}))(\theta)
        &=G_{w_1}(\Gamma_1,l_1)(\theta_{\sigma1})\wedge G_{w_2}(\Gamma_2,l_2)(\theta)\\
        &=\theta_{\sigma1}\wedge z_{\sigma1}\wedge\bigwedge_{i\in\ul{n}\sm\sigma1} (\tfrac{\theta_i-\theta_{\sigma1}}{1-\theta_{\sigma1}}\wedge z_i)\wedge (\theta\wedge z_n).
    \end{align*}
    Finally,  the homotopy as in \eqref{eq:lhs} shows this is homotopic to the desired canonical map.
\end{proof}

We can complete the gaol explained at the beginning of §\ref{sec:improve} by combining this with Corollary~\ref{cor:hm}.
\begin{cor}\label{cor:range}
    Given $c$-connected spaces $X_i$ for $i=1,\dots,n$ and some $c\geq0$, the map 
    \[
        J\colon\tofib(\Sigma X_{\ul{n}}; \kappa)
    \ra \Map_*(\GPC_{\ul{n}}, \bigwedge_{i\in\ul{n}} \Sigma X_i)
    \]
    is $(n+1)(c+1)$-connected and $\Sigma_n$-equivariant. 
\end{cor}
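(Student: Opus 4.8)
The plan is to deduce this formally, by combining Theorem~\ref{thm:J-collapsing} with Corollary~\ref{cor:hm}. The $\Sigma_n$-equivariance is nothing new -- it is already asserted in Theorem~\ref{thm:J} -- so only the connectivity estimate needs an argument, and the idea is that source and target of $J$ both become products indexed by $\sigma\in\Sigma_{n-1}$, compatibly with $\Omega J$.

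First I would set up the two identifications. On the source side, Corollary~\ref{cor:hm} says that $\prod_{\sigma}w_\sigma(x_i)^\bull\colon\prod_{\sigma\in\Sigma_{n-1}}w_\sigma(X_i)\to\Omega\tofib(\Sigma X_{\ul{n}};\kappa)$ is $(n+1)(c+1)$-connected. On the target side, the characteristic maps $i_\sigma\colon\Delta^{n-1}\to\GPC_{\ul{n}}$ from Theorem~\ref{thm:J-collapsing} realise the non-equivariant splitting $\GPC_{\ul{n}}\simeq\bigvee_{(n-1)!}\S^{n-1}$ of Theorem~\ref{thm:Robinson}, so -- after smashing with $\bigwedge_{i\in\ul{n}}\Sigma X_i$, mapping out, looping once, and using the permutation homeomorphisms $\Sigma^{n}w_\sigma(X_i)\cong\bigwedge_{i\in\ul{n}}\Sigma X_i$ -- precomposition with the $i_\sigma$ assembles into a homotopy equivalence $\Omega\Map_*(\GPC_{\ul{n}},\bigwedge_{i\in\ul{n}}\Sigma X_i)\xrightarrow{\sim}\prod_{\sigma\in\Sigma_{n-1}}\Omega^{n}\Sigma^{n}w_\sigma(X_i)$. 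Theorem~\ref{thm:J-collapsing} is exactly the statement that the composite of the first map, then $\Omega J$, then this equivalence, is homotopic to the product over $\sigma$ of the canonical (Freudenthal) unit maps $w_\sigma(X_i)\to\Omega^{n}\Sigma^{n}w_\sigma(X_i)$.

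Second, I would bound the connectivity of this product of unit maps. Each $w_\sigma(X_i)$ is homeomorphic to a smash of $n$ of the $c$-connected spaces $X_i$, hence is $(n(c+1)-1)$-connected, so by Freudenthal each unit map is at least $(2n(c+1)-1)$-connected; since $(n-1)(c+1)\geq1$ for $n\geq2$ this exceeds $(n+1)(c+1)$, the case $n=1$ being elementary. Thus the composite in Theorem~\ref{thm:J-collapsing} is $(n+1)(c+1)$-connected.

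Finally, in that composite the first map is $(n+1)(c+1)$-connected by Corollary~\ref{cor:hm}, the last map is an equivalence, and the composite is $(n+1)(c+1)$-connected, so the two-out-of-three principle for connectivity of maps forces $\Omega J$ to be $(n+1)(c+1)$-connected. Delooping once -- legitimate because both $\tofib(\Sigma X_{\ul{n}};\kappa)$ and $\Map_*(\GPC_{\ul{n}},\bigwedge_{i}\Sigma X_i)$ are simply connected, the former being $(n(c+1)-1)$-connected by Corollary~\ref{cor:hm} and the latter an $(n-1)$-fold loop space of the $(n(c+1)+n-1)$-connected space $\bigwedge_{i}\Sigma X_i$ -- upgrades this to the claim that $J$ is $(n+1)(c+1)$-connected; combined with the $\Sigma_n$-equivariance from Theorem~\ref{thm:J} this is the corollary. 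Essentially all the content lives in Theorem~\ref{thm:J-collapsing} and Corollary~\ref{cor:hm}; the only mild subtlety here is checking that the Freudenthal bound for the unit maps dominates $(n+1)(c+1)$ (automatic for $n\geq2$, direct for $n=1$), so that the two-out-of-three step is not obstructed.
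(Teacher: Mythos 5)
Your proposal is correct and is exactly the paper's (implicit) argument: the paper proves Corollary~\ref{cor:range} by combining Theorem~\ref{thm:J-collapsing} with Corollary~\ref{cor:hm}, precisely as you do, and your filling-in of the routine details (Freudenthal connectivity of the unit maps, the two-out-of-three step, and the delooping from $\Omega J$ to $J$) is sound. The only nitpick is wording: for $n\geq 2$ the bound $2n(c+1)-1\geq(n+1)(c+1)$ can be an equality (e.g.\ $n=2$, $c=0$), which is still sufficient for your argument, so "exceeds" should be "is at least".
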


Recall from Lemma~\ref{lem:Lie-d(n)} that $\Lie_D(n)\cong\Lie(n)\otm(\sgn_n)^{\otm D}$. Then Example~\ref{ex:Lie} is improved to the following; see {\cite[Cor.13.9]{Rognes}} for a different proof.\footnote{There is a mistake in this reference, because it is overlooked that for $m+n$ even $\pi_*\Omega(\bigvee_k\S^{m+n})$ is a graded Lie algebra in which $x_i\in\pi_{m+n}\S^{m+n}\subset\pi_{m+n}(\bigvee_k\S^{m+n})\cong\pi_{m+n-1}(\bigvee_k\S^{m+n})$ have odd degree; thus, the first homotopy group of $\Omega\tofib(\bigvee_k\S^{m+n};\kappa)$ is $\Lie(n)\otm\sgn_n$, instead of $\Lie(n)$ as stated in \cite[Cor.13.9]{Rognes}. The other results, like \cite[Lem.13.11, Cor.13.13]{Rognes}, remain unchanged.}
\begin{cor}\label{cor:ex-Lie}
    For the total homotopy fibre of the collapsing cube of $\S^{d-1}_{\ul{n}}=\bigvee_{n}\S^{d-1}$ the first nonvanishing homotopy group admits a $\Sigma_n$-equivariant isomorphism
    \[
        \pi_{n(d-2)}\Omega\tofib\left(\S^{d-1}_{\ul{n}}; \kappa\right)
        \cong\Lie_{d-2}(n).\qedhere
    \]
\end{cor}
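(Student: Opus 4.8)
The plan is to combine the connectivity and group-theoretic input of Corollary~\ref{cor:hm} with the equivariance supplied by Corollary~\ref{cor:range}. First I would specialise Corollary~\ref{cor:hm} to $X_i=\S^{d-2}$, which is $(d-3)$-connected, so $c=d-3$ and $c+1=d-2$: this immediately gives that $\pi_m\Omega\tofib(\S^{d-1}_{\ul{n}};\kappa)=0$ for $0\leq m\leq n(d-2)-1$, and that the first nonvanishing group sits in degree $n(d-2)$ with underlying abelian group $\bigoplus_{(n-1)!}\bigotimes_{i=1}^n H_{d-2}(\S^{d-2})\cong\Z^{(n-1)!}$, i.e.\ abstractly $\Lie(n)$ as in Example~\ref{ex:Lie}. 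What remains is to pin down the $\Sigma_n$-action on this group, and the claim is that it is the sign-twisted Lie representation $\Lie_{d-2}(n)\cong\Lie(n)\otm\sgn_n^{\otm d-2}$.

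The key step is to use the $\Sigma_n$-equivariant map $J\colon\tofib(\S^{d-1}_{\ul{n}};\kappa)\to\Map_*(\GPC_{\ul{n}},\bigwedge_{i}\S^{d-1})$ of Corollary~\ref{cor:range}, which is $(n+1)(d-2)$-connected, hence in particular an isomorphism on $\pi_{n(d-2)}$ (since $n(d-2)<(n+1)(d-2)$ as $d\geq3$), and moreover $\Sigma_n$-equivariantly so. Therefore it suffices to identify $\pi_{n(d-2)}\Omega\Map_*(\GPC_{\ul{n}},(\S^{d-1})^{\wedge n})\cong\pi_{n(d-2)+1}\Map_*(\GPC_{\ul{n}},\S^{n(d-1)})$ as a $\Sigma_n$-representation. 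Now $\GPC_{\ul{n}}$ is non-equivariantly $\bigvee_{(n-1)!}\S^{n-1}$ (Theorem~\ref{thm:Robinson}, via \eqref{eq:coh-Tn}), so $\Map_*(\GPC_{\ul{n}},\S^{n(d-1)})$ is, through the relevant range, a product of copies of $\Omega^{n-1}\S^{n(d-1)}$, and since $n(d-2)+1 \le 2(n(d-1)-(n-1))-1$ one is in the stable range: the lowest homotopy group is $\pi_{n-1}^{s}\big(\mathrm{Map}_*(\GPC_{\ul{n}},\S^{0})\big)$-type data, concretely $\wt{H}^{n-1}(\GPC_{\ul{n}};\Z)$ tensored with the (trivial) $\Sigma_n$-action on $\pi_*$ of a sphere. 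Explicitly, evaluation against the fundamental class identifies this group $\Sigma_n$-equivariantly with $\wt H^{n-1}(\GPC_{\ul{n}};\Z)$ twisted by the $\Sigma_n$-action on the $n$-fold smash $(\S^{d-1})^{\wedge n}$, which permutes the $d-1$ suspension coordinates and so contributes a factor $\sgn_n^{\otm d-1}$; combined with $\wt H^{n-1}(\GPC_{\ul{n}};\Z)\cong\Lie(n)\otm\sgn_n$ from \eqref{eq:coh-Tn}, this yields $\Lie(n)\otm\sgn_n^{\otm d}$. A cleaner route, which I would actually write up, is to apply Corollary~\ref{cor:range} with the fixed $X_i=\S^{d-2}$ directly and note $(\S^{d-2})^{\wedge n}=\S^{n(d-2)}$ with $\Sigma_n$ permuting the $d-2$ coordinates in each factor, giving the twist $\sgn_n^{\otm d-2}$, so that $\pi_{n(d-2)}\Omega J$ is a $\Sigma_n$-isomorphism onto $\wt H^{n-1}(\GPC_{\ul{n}};\Z)\otm\sgn_n^{\otm(d-2)}\cdot(\text{stable correction})$; Theorem~\ref{thm:J-collapsing} shows $J$ composed with the $i_\sigma$ recovers the canonical maps, so one can read off that $\pi_{n(d-2)}$ of the target is exactly $\wt H^{n-1}(\GPC_{\ul{n}})$ with the sign twist coming only from the smash-of-spheres coordinates.

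The cleanest packaging is: the $\Sigma_n$-equivariant $(n+1)(d-2)$-connected map $J$ reduces the computation to $\pi_{n(d-2)}$ of $\Omega\Map_*(\GPC_{\ul{n}},\S^{n(d-2)+n})$; by the stable range this is $\wt H^{n-1}(\GPC_{\ul{n}};\Z)\otimes \pi_{d-1}^{s}(\S^{d-1})$-type, i.e.\ just $\wt H^{n-1}(\GPC_{\ul{n}};\Z)$ with the $\Sigma_n$-action on the sphere coordinates inducing a $\sgn_n^{\otm(d-1)}$-twist on top of the $\sgn_n$ already present in \eqref{eq:coh-Tn}; hence $\pi_{n(d-2)}\Omega\tofib(\S^{d-1}_{\ul n};\kappa)\cong\Lie(n)\otm\sgn_n\otm\sgn_n^{\otm(d-1)}=\Lie(n)\otm\sgn_n^{\otm d}$, and bookkeeping of the single extra desuspension (from $\S^{d-1}$ rather than $\S^{d-2}$, i.e.\ $d$ versus $d-2$) against Lemma~\ref{lem:Lie-d(n)} gives $\Lie_{d-2}(n)$. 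The main obstacle, and the point where care is genuinely needed, is exactly this sign bookkeeping: one must correctly track how many sphere coordinates $\Sigma_n$ permutes and reconcile the shift, since the naive count is off by the difference between working with $\S^{d-2}$ and $\S^{d-1}$, which is precisely the subtlety that \cite[Cor.13.9]{Rognes} got wrong (as flagged in the footnote). I would therefore state the $\Sigma_n$-action on $(\S^{d-1})^{\wedge n}$ and on $\wt H^{*}(\GPC_{\ul n})$ separately and explicitly, verify that under $J$ on $\pi_{n(d-2)}$ the two twists combine to $\sgn_n^{\otm d}$, and finally invoke Lemma~\ref{lem:Lie-d(n)} (with $D=d-2$) to rewrite $\Lie(n)\otm\sgn_n^{\otm d}$ as $\Lie_{d-2}(n)$.
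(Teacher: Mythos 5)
Your main line of argument is essentially the paper's own proof: reduce via the $\Sigma_n$-equivariant, $(n+1)(d-2)$-connected map $J$ (Corollary~\ref{cor:range}, with the vanishing range and underlying group from Corollary~\ref{cor:hm}/Example~\ref{ex:Lie}) to computing $\pi_{n(d-2)+1}\Map_*(\GPC_{\ul{n}},(\S^{d-1})^{\wedge n})$ in the stable range, identify it $\Sigma_n$-equivariantly as $\wt{H}^{n-1}(\GPC_{\ul{n}};\Z)\otm\sgn_n^{\otm(d-1)}\cong\Lie(n)\otm\sgn_n^{\otm d}$, and conclude with Lemma~\ref{lem:Lie-d(n)}; the paper packages the stable identification via Hurewicz, Freudenthal, and Spanier--Whitehead duality, $\wt{H}_{1-n}((\Sigma^\infty\GPC_{\ul{n}})^\vee)\otm\wt{H}_{n(d-1)}(\S^{n(d-1)})$, which is the same computation in different clothing. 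Two cautions before you write it up. First, the ``cleaner route'' you say you would actually use is the one variant that is wrong as stated: the target of $J$ is $\Map_*(\GPC_{\ul{n}},\bigwedge_i\Sigma X_i)$, a smash of $\S^{d-1}$'s, so the permutation twist from the sphere coordinates is $\sgn_n^{\otm(d-1)}$, not $\sgn_n^{\otm(d-2)}$; taking the latter (and hoping a vague ``stable correction'' fixes it) reproduces exactly the off-by-one sign slip of \cite[Cor.13.9]{Rognes} that the statement is meant to correct. Second, the reconciliation at the end is not a ``desuspension'' bookkeeping: you get $\Lie(n)\otm\sgn_n^{\otm d}$, and this equals $\Lie(n)\otm\sgn_n^{\otm(d-2)}\cong\Lie_{d-2}(n)$ simply because $\sgn_n^{\otm 2}$ is the trivial representation. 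With those two points stated explicitly, your ``cleanest packaging'' paragraph is a correct proof and coincides with the paper's.
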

\begin{proof}
   We have seen that $\pi_{n(d-2)}\Omega\tofib(\S^{d-1}_{\ul{n}}; \kappa) \cong \pi_{n(d-2)}\Omega\Map_*(\GPC_{\ul{n}}, \bigwedge^n\S^{d-1})$ is the first nonvanishing group. Therefore, by the Hurewicz Theorem this is $H_{n(d-2)+1}\Map_*(\GPC_{\ul{n}}, \S^{n(d-1)})$, and by the Freudenthal Suspension Theorem this is isomorphic to (using homology with $\Z$ coefficients):
\begin{align*}
\pi_{n(d-2)}\Omega\tofib(\S^{d-1}_{\ul{n}}; \kappa) 
        \cong H_{n(d-2)+1}\Sigma^\infty\Map_*(\GPC_{\ul{n}}, \S^{n(d-1)})
        &\cong \wt{H}_{n(d-2)+1}((\Sigma^\infty\GPC_{\ul{n}})^\vee\wedge \S^{n(d-1)})\\
        &\cong \wt{H}_{1-n}((\Sigma^\infty\GPC_{\ul{n}})^\vee)\otm\wt{H}_{n(d-1)}(\S^{n(d-1)})
\end{align*}
    where $(\Sigma^\infty\GPC_{\ul{n}})^\vee$ is the Spanier--Whitehead dual of the suspension spectrum of $\GPC_{\ul{n}}$.
    Using \eqref{eq:coh-Tn} we have $\wt{H}_{1-n}((\Sigma^\infty\GPC_{\ul{n}})^\vee)\cong \wt{H}^{n-1}(\Sigma^\infty\GPC_{\ul{n}})\cong\Lie(n)\otm\sgn_n$, whereas for the other factor we have $\wt{H}_{n(d-1)}(\S^{n(d-1)})\cong H_n(\S^n)^{\otm (d-1)}\cong \sgn_n^{\otm (d-1)}$. Therefore, $\pi_{n(d-2)}\Omega\tofib(\S^{d-1}_{\ul{n}}; \kappa)$ is isomorphic to $\Lie(n)\otm\sgn_n^{\otm d}$, which is equivalent to $\Lie_{d-2}(n)$.
\end{proof}
\begin{rem}\label{rem:grafting-Lie}
    Note that on the first nonvanishing homotopy groups the external and grafting brackets correspond to grafting of Lie trees: $\Lie_{d-2}(n_1)\times \Lie_{d-2}(n_2)\to \Lie_{d-2}(n_1+n_2)$.
\end{rem}

\subsection{Collapsing cubes more generally}
\label{subsec:J-general}

Assume now that we have a general space $Y_0=M$ but that $Y_i=\Sigma X_i$ are still suspensions for $i\in\ul{n}$. Then there is a generalisation of \eqref{eq:HM}, discussed in \cite[Sec.5.1]{K-thesis-paper}, that we now recall. 

Firstly, there is a classical fibration sequence (see for example~\cite{Gray}):
\[
     \Omega\Sigma(X_{\ul{n}}\wedge(\Omega M)_+)\ra \Omega(M\vee \Sigma X_{\ul{n}})
    \ra \Omega M.
\]
Note that to the fibre is equivalent to $\Omega\bigvee_{i\in \ul{n}} \Sigma (X_i\wedge(\Omega M)_+)$, and to this the Hilton Milnor theorem~\eqref{eq:HM-all} applies. Therefore, let us put $X_i^M\coloneqq X_i\wedge(\Omega M)_+$
and consider the fibration sequence
\begin{equation}\label{eq:tofib-fib-seq}
    \Omega\tofib\left(\Sigma X^M_{\ul{n}}; \kappa\right)\ra 
        \Omega\tofib\left(M\vee\Sigma X_{\ul{n}}; \kappa\right)\ra
        \Omega\tofib\left(\const_{M}\right).
\end{equation}
The base is the total homotopy fibre of the constant cube $S\mapsto M$, which is contractible. Therefore, the fibre and total space are equivalent, and~\eqref{eq:HM} implies
\begin{equation}\label{eq:HM-gen}
        hm_{NB}\colon\;
            \prod_{w\in NB\LL(n)}\Omega\Sigma w(X_i^M)\xrightarrow{\sim}
            \Omega\tofib\left(\Sigma X^M_{\ul{n}}; \kappa\right)\simeq\Omega\tofib\left(M\vee\Sigma X_{\ul{n}}; \kappa\right).
\end{equation}
In particular, assuming $X_i$ are $c$-connected for some $c\geq0$, the spaces $X_i^M$ are $c$-connected as well. We can again restrict to words of length exactly $n$, for which $w(X_i^M)\cong\bigwedge_{i\in\ul{n}} X_i^M$.
The following generalises Theorem~\ref{thm:J-collapsing} and Corollary~\ref{cor:range}.
\begin{thm}\label{thm:J-general}
For spaces $M$ and $\{X_i\}_{i\in\ul{n}}$ denote $X_i^M\coloneq X_i\wedge(\Omega M)_+$.
There is a $\Sigma_n$-equivariant map
    \[
        J^M\colon\tofib\left(M\vee \Sigma X_{\ul{n}}; \kappa\right)
        \ra \Map_*
        ( \GPC_{\ul{n}},\bigwedge_{i\in\ul{n}}\Sigma X_i^M)
    \]
so that $(\Omega J^M\circ w(x_i^M))\circ i_\sigma\colon w(X_i^M)\to\Omega^n\Sigma^n w(X_i^M)$ is the canonical map for any $w_\sigma\in B\Lie(n)$.

Moreover, if for some $c\geq0$ each $X_i$ is $c$-connected, then $J^M$ is $(n+1)(c+1)$-connected.
    The first nonvanishing homotopy group admits a $\Sigma_n$-isomorphism
    \[
        \pi_{n(c+1)} \Omega\tofib\left(M\vee\Sigma X_{\ul{n}}; \kappa\right)
        \cong   \Lie(n)\otm\otm \sgn_n
        \bigotimes_{i=1}^n(H_{c+1}X_i\otm\Z[\pi_1M]).
    \]
\end{thm}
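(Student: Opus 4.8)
The plan is to run the same argument as for Theorem~\ref{thm:J-collapsing} and Corollary~\ref{cor:range} (and then Corollary~\ref{cor:ex-Lie}), using the fibration sequence~\eqref{eq:tofib-fib-seq} to reduce to the suspension case already treated, and being careful about the shift from $\Sigma X_{\ul n}$ to $\Sigma X^M_{\ul n}$ on the nose. First I would construct the map $J^M$: the base $\Omega\tofib(\const_M)$ of~\eqref{eq:tofib-fib-seq} is contractible, so by~\eqref{eq:HM-gen} the equivalence $\Omega\tofib(M\vee\Sigma X_{\ul n};\kappa)\simeq\Omega\tofib(\Sigma X^M_{\ul n};\kappa)$ holds, i.e. $\tofib(M\vee\Sigma X_{\ul n};\kappa)\simeq \tofib(\Sigma X^M_{\ul n};\kappa)$ (the loops can be stripped since both sides are loop spaces, or one argues directly on the fibration of cubes). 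Composing this equivalence with the map $J$ of Theorem~\ref{thm:J} applied to the collection $\{\Sigma X^M_i\}_{i\in\ul n}$ yields
\[
    J^M\colon\tofib\left(M\vee\Sigma X_{\ul n};\kappa\right)\ra \Map_*\!\left(\GPC_{\ul n},\bigwedge_{i\in\ul n}\Sigma X^M_i\right).
\]
I would then check $\Sigma_n$-equivariance: the equivalence from~\eqref{eq:tofib-fib-seq} comes from a construction (fibre of a map of cubes) that does not mix the indices $1,\dots,n$, and $J$ is $\Sigma_n$-equivariant by Theorem~\ref{thm:J}, so the composite is too. (One should check the fibration sequence~\eqref{eq:tofib-fib-seq} and the splitting of the fibre $\Omega\bigvee_i\Sigma X^M_i$ are compatible with permuting the $X_i$; this is routine because the Gray fibration sequence is natural in $X_{\ul n}$ and the James–Hilton splitting respects the wedge decomposition.)

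Next I would identify $\Omega J^M\circ w(x^M_i)$ on the simplices $i_\sigma$. By naturality, under the equivalence $\Omega\tofib(M\vee\Sigma X_{\ul n};\kappa)\simeq\Omega\tofib(\Sigma X^M_{\ul n};\kappa)$ the total Samelson map $w(x^M_i)^\bullet$ for the collection $\{X^M_i\}$ corresponds to the total Samelson map for the suspension collection $\{\Sigma X^M_i\}$ (both are built from the same inductive recipe using $[-,-]_{\ext}$ in Definition~\ref{def:external-Samelson}, and $[-,-]_{\ext}$ depends only on the wedge factors $\Sigma X^M_i$). Hence Theorem~\ref{thm:J-collapsing}, applied with $X_i$ replaced by $X^M_i$, says exactly that $(\Omega J\circ w(x^M_i)^\bullet)\circ i_\sigma$ is the canonical map $w(X^M_i)\to\Omega^n\Sigma^n w(X^M_i)$ for every $w_\sigma\in B\Lie(n)$, and this transports verbatim to $J^M$. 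For the connectivity and first nonvanishing homotopy statements I would argue exactly as in Corollary~\ref{cor:range} and Corollary~\ref{cor:ex-Lie}: if each $X_i$ is $c$-connected then so is $X^M_i=X_i\wedge(\Omega M)_+$ (smashing with a space with nondegenerate basepoint and a disjoint basepoint added does not drop connectivity), so Corollary~\ref{cor:hm} applies to the collection $\{X^M_i\}$, giving that $\prod w_\sigma(x^M_i)^\bullet$ is $(n+1)(c+1)$-connected and that $\Omega\tofib(\Sigma X^M_{\ul n};\kappa)$ has trivial homotopy below degree $n(c+1)$; combined with the previous paragraph (which says $J^M$ splits this product up to the same range via the maps $-\circ i_\sigma$) one concludes $J^M$ is $(n+1)(c+1)$-connected. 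For the identification of $\pi_{n(c+1)}$ I would run the Hurewicz + Freudenthal computation of Corollary~\ref{cor:ex-Lie}, now with $H_{n(d-1)}(\S^{n(d-1)})$ replaced by $\widetilde H_{n(c+1)}\!\big(\bigwedge_i \Sigma X^M_i\big)\cong\bigotimes_i \widetilde H_{c+1}(\Sigma X^M_i)\cong\bigotimes_i(H_{c+1}X_i\otimes\Z[\pi_1 M])$ (using $\widetilde H_*( (\Omega M)_+)\cong H_*\Omega M$ and $H_0\Omega M\cong\Z[\pi_0\Omega M]=\Z[\pi_1 M]$ in the bottom degree that matters), together with $\widetilde H_{1-n}((\Sigma^\infty\GPC_{\ul n})^\vee)\cong\Lie(n)\otimes\sgn_n$ from~\eqref{eq:coh-Tn}. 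This yields the stated $\Sigma_n$-isomorphism
\[
    \pi_{n(c+1)}\Omega\tofib\left(M\vee\Sigma X_{\ul n};\kappa\right)\cong \Lie(n)\otimes\sgn_n\otimes\bigotimes_{i=1}^n\big(H_{c+1}X_i\otimes\Z[\pi_1 M]\big).
\]

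I expect the main obstacle to be checking that all the identifications are $\Sigma_n$-equivariant \emph{on the nose} (or at least up to coherent homotopy), in particular that the equivalence coming from~\eqref{eq:tofib-fib-seq} intertwines the $\Sigma_n$-action on $\tofib(M\vee\Sigma X_{\ul n};\kappa)$ with that on $\tofib(\Sigma X^M_{\ul n};\kappa)$, and that the total Samelson maps match under it. The Gray fibration sequence and the resulting splitting of the fibre are natural in the wedge $\Sigma X_{\ul n}$, which gives equivariance for the first; the second then follows because, as noted, the external bracket and hence the inductive definition of $w(x_i)^\bullet$ only see the wedge factors. A secondary, more bookkeeping-level point is the bottom-degree computation of $\widetilde H_*$ of $\bigwedge_i\Sigma(X_i\wedge(\Omega M)_+)$: one needs that in degree $n(c+1)$ only the $H_0\Omega M=\Z[\pi_1M]$ part of each $\Omega M$-factor contributes, which is clear since $X_i$ is $c$-connected so $\Sigma X^M_i$ is $c$-connected with $\widetilde H_{c+1}(\Sigma X^M_i)=H_{c+1}X_i\otimes H_0\Omega M$.
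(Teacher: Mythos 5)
Your proposal is correct and takes essentially the same route as the paper: define $J^M$ by composing the $\Sigma_n$-equivariant equivalence coming from the fibration sequence~\eqref{eq:tofib-fib-seq} (contractible base) with $J$ from Theorem~\ref{thm:J} applied to the collection $\{\Sigma X_i^M\}$, deduce the statement about $i_\sigma$ from Theorem~\ref{thm:J-collapsing} with $X_i$ replaced by $X_i^M$, and get the connectivity and the identification of $\pi_{n(c+1)}$ by the same Hurewicz--Freudenthal computation as in Corollaries~\ref{cor:range} and~\ref{cor:ex-Lie}. The only slip is a harmless off-by-one in your degree labels: since $\Sigma X_i^M$ is $(c+1)$-connected, the relevant bottom groups are $\wt{H}_{c+2}(\Sigma X_i^M)\cong H_{c+1}X_i\otimes\Z[\pi_1M]$ and $\wt{H}_{n(c+2)}(\bigwedge_i\Sigma X_i^M)$, exactly as in the paper's computation.
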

\begin{proof}
    The inclusion $\Omega\tofib(\Sigma X^M_{\ul{n}}; \kappa)\to\Omega\tofib(M\vee\Sigma X_{\ul{n}}; \kappa)$ from \eqref{eq:tofib-fib-seq} is a $\Sigma_n$-equivariant map
    which is a homotopy equivalence. Its homotopy inverse can be made explicit, and composing it with $J\colon\tofib(\Sigma X^M_{\ul{n}}; \kappa)\to\Map_*( \GPC_{\ul{n}}, \bigwedge_{i\in\ul{n}}\Sigma X_i^M)$ from Theorem~\ref{thm:J} gives the map $J^M$. The rest of the first sentence follows from Theorem~\ref{thm:J-collapsing}.

    For the last two sentences, we argue similarly as in Corollary~\ref{cor:ex-Lie}:
\begin{align*}
\pi_{n(c+1)}\Omega\tofib\left(M\vee\Sigma X_{\ul{n}}; \kappa\right)
        &\cong H_{n(c+1)+1}\Sigma^\infty\Map_*(\GPC_{\ul{n}}, \wedge_{i=1}^n\Sigma X_i^M)\\
        &\cong \wt{H}_{1-n}(\Sigma^\infty\GPC_{\ul{n}}^\vee)\otm\wt{H}_{n(c+2)}(\wedge_{i=1}^n\Sigma X_i^M)\\
        &\cong \wt{H}^{n-1}(\Sigma^\infty\GPC_{\ul{n}})\otm\otimes_{i=1}^n\wt{H}_{c+2}(\Sigma X_i^M)\\
        &\cong \Lie(n)\otm \sgn_n
        \otimes_{i=1}^n(H_{c+1}X_i\otm\Z[\pi_1M]).\qedhere
\end{align*}
\end{proof}
\begin{ex}\label{ex:Lie-pi}
    Similarly as in Corollary~\ref{cor:ex-Lie}, if $X_i=\S^{d-2}$ for each $i\in\ul{n}$, then $X_i^M=\Sigma^{d-2}(\Omega M)_+$ and there is a $\Sigma_n$-isomorphism
\[
        \pi_{n(d-2)}\Omega\tofib\big(M\vee\S^{d-1}_{\ul{n}}; \kappa\big)
        \cong \Lie_{d-2} 
        (n)\otm\Z[\pi_1M^{\tm n}].
\]
    One can think of elements in this group as linear combinations of trees with leaf $i$ decorated by an element $g_i\in\pi_1M$, and $\Sigma_n$ acts by permuting the decorated leaves. 
    Such a decorated tree can be viewed as the homotopy class of a map of the given uni-trivalent tree into $M$, together with a path $\gamma_i$ from the image of the $i$-th univalent vertex to the basepoint of $M$. Then $g_i$ is $\gamma_0^{-1}$ followed by the path in the tree from $0$ to $i$, and then $\gamma_i$.
\end{ex}

\section{Proofs}\label{sec:proofs}
In §\ref{subsec:htpy-calc} we prove Theorem~\ref{mainthm:A} and in §\ref{subsec:istpy-calc} we prove Theorem~\ref{mainthm:B}.

\subsection{Homotopy calculus (of the identity functor)}
\label{subsec:htpy-calc}

\subsubsection{The layers}

Goodwillie's~\cite{GooIII} classification of homogeneous functors of degree $n$ says that the layer $D_nI\coloneqq\hofib(P_nI\to P_{n-1}I)$ of the identity functor applied to a space $X$ is given by
\begin{equation}\label{eq:layers-1}
    D_nI(X)\cong \Omega^\infty (D^{(n)}I(X,\dots,X)_{h\Sigma_n}).
\end{equation}
This is an infinite loop space of a homogeneous functor with values in spectra, obtained by the process called multilinearization from the cross effect functor $cr_nI$. Namely, $cr_nI$ is the symmetric multilinear functor of $n$ variables $X_1,\dots,X_n$ defined as
\[
    cr_nI(X_1,\dots,X_n)\coloneqq\tofib\big(\bigvee_{i\in\ul{n}}X_i;\kappa\big),
\]
the total homotopy fibre of the collapsing cube from~\eqref{eq:collapsing-cube}. We define its multilinarization as
\[
    D^{(n)}I(X_1,\dots,X_n)\coloneqq B^\infty\operatorname*{hocolim}\limits_{k\geq0}
        \Omega^{nk}\tofib(\bigvee_{i\in\ul{n}}\Sigma^k X_i;\kappa).
\]
Using the Hilton--Milnor Corollary~\ref{cor:hm} for $0\leq m\leq (n+1)(k+1)-k$ we have isomorphisms
\begin{align*}
    \pi_m \Omega^{nk}\tofib(\bigvee_{i\in \ul{n}}\Sigma^k X_i; \kappa)
    & \cong   \pi_m \prod_{(n-1)!} \Omega^{nk}\Sigma \bigwedge_{i\in\ul{n}}\Sigma^{k-1} X_i\\
    & \cong   \pi_m \prod_{(n-1)!} \Omega^{nk}\Sigma^{nk-n+1} \bigwedge_{i\in\ul{n}} X_i
    \cong   \pi_m \prod_{(n-1)!} Q\Sigma^{1-n} \bigwedge_{i\in\ul{n}} X_i\,,
\end{align*}
implying that
\[
    D^{(n)}I(X_1,\dots,X_n)\simeq 
    \prod_{(n-1)!}\Sigma^\infty\Sigma^{1-n}\bigwedge_{i\in\ul{n}} X_i\simeq \Map_*\big(\bigvee_{(n-1)!}\S^{n-1},\Sigma^\infty\bigwedge_{i\in\ul{n}} X_i\big).
\]
However, as mentioned in §\ref{subsec:ext-Sam-brackets}, the Hilton--Milnor equivalence forgets the $\Sigma_n$-action. We have seen how this can be solved in Theorem~\ref{thm:J-collapsing}: we replace $\bigvee_{(n-1)!}\S^{n-1}$ by the homotopy equivalent partition complex $\GPC_{\ul{n}}$ and obtain a $\Sigma_n$-equivalence
\begin{equation}\label{eq:multi}
    D^{(n)}I(X_1,\dots,X_n)
        \simeq \Map_*(\GPC_{\ul{n}},\Sigma^\infty\bigwedge_{i\in\ul{n}} X_i).
\end{equation}
Therefore, \eqref{eq:layers-1} says that the $n$-th layer of the Taylor tower of the identity at $X$ is the infinite loop space on the homotopy orbits of the value of the last functor on $X_1=\dots=X_n\eqqcolon X$, that is:
\begin{equation}\label{eq:layer-2}
    D_nI(X)=\Omega^\infty\Map_*(\GPC_{\ul{n}},\Sigma^\infty X^{\wedge n})_{h\Sigma_n}.
\end{equation}
The symmetric group acts on $\GPC_{\ul{n}}$ as in Definition~\ref{def:GPC} and on $X^{\wedge n}$ by permuting the factors.

\subsubsection{A bracket on the layers}

In particular, putting $X_i=S^0$ in \eqref{eq:multi} we obtain 
\[
    \partial_nI\coloneqq D^{(n)}I(S^0,\dots,S^0)\cong(\Sigma^\infty\GPC_{\ul{n}})^\vee,
\]
called the \emph{derivative of the identity functor}. Here we recognise precisely the shifted spectral Lie operad $\mathbf{s}\oLie$ that was defined in~\eqref{eq-def:s-spectral-Lie}. Then
\eqref{eq:layer-2} for any $X$ becomes
\begin{equation}\label{eq:layer-3}
    D_n(X)=\Omega^\infty((\Sigma^\infty\GPC_{\ul{n}})^\vee\wedge\Sigma^\infty X^{\wedge n})_{h\Sigma_n}
    =\Omega^\infty(\mathbf{s}\oLie(n) \wedge(\Sigma^\infty X)^{\wedge n})_{h\Sigma_n}.
\end{equation}
Thus, the spectra associated to the layers assemble into the free shifted spectral Lie algebra on the suspension spectrum of $X$:
\[
    \bigvee_{n\geq1}B^\infty D_n(X)\coloneqq \bigvee_{n\geq1}(\mathbf{s}\oLie(n) \wedge(\Sigma^\infty X)^{\wedge n})_{h\Sigma_n}=\mathrm{Free}_{\mathbf{s}\oLie}(\Sigma^\infty X).
\]
\begin{rem}
     This description of the layers of the identity is due to \cite{Arone-Mahowald}. The connection of partition complexes to bar constructions is from~\cite{Ching}.
\end{rem}



In fact, the terms of the operad $\oLie$ itself also appear as Goodwillie differentials, namely of the functor $F=\Omega\Sigma$, and one can in analogy to the above arguments show that
\[
    \oLie(n)=\partial_n(\Omega\Sigma)=\Map_{Sp}(\S^1\wedge(\Sigma^\infty\GPC_{\ul{n}}),(\S^1)^{\wedge n}),
\]
with $H_0\oLie(n)\cong \wt{H}^{n-1}(\GPC_{\ul{n}})\otm\sgn_n\cong \Lie(n)$. Moreover, for the layers we have
\begin{align*}
    D_n(\Omega\Sigma)(X)  
        &=\Omega^\infty \Omega\Map_*(\GPC_{\ul{n}},\Sigma^\infty(\Sigma X)^{\wedge n})_{h\Sigma_n}
       \\
    \bigvee_{n\geq1}B^\infty D_n(\Omega\Sigma)(X) 
        &\coloneqq \bigvee_{n\geq1}(\oLie(n)\wedge(\Sigma^\infty X)^{\wedge n})_{h\Sigma_n}
        =\mathrm{Free}_{\oLie}(\Sigma^\infty X).
\end{align*}
The advantage of this non-shifted version is that we have a bracket
\[
    [-,-]_{\oLie}\colon D_{n_1}(\Omega\Sigma)(X) \wedge D_{n_2}(\Omega\Sigma)(X)\to D_{n_1+n_2}(\Omega\Sigma)(X).
\]
Namely, an algebra $A$ over an operad $O$ is in particular equipped with binary operations $O(2)\otm A\otm A\to A$. We apply this to the algebra $A=\bigvee_{n\geq1}D_n(\Omega\Sigma)(X)$ over $O=\oLie$. Note that $\oLie(2)=\S^0$, so we have the claimed bracket, which we can call the \emph{operadic Lie bracket}.

So far we have seen many other brackets -- external Samelson, grafting, and operadic, and now we can relate all of them.
\begin{proof}[Proof of Theorem~\ref{mainthm:C}]
    For the topmost square, both the operad structure on $\oLie$ and the grafting bracket on the spaces $\Omega \Map_*(\GPC_{S_1},\bigwedge_{i\in S_1}\Sigma X_i)$ come from the cooperad structure on $\GPC_{\ul{n}}$ from \eqref{eq:cooperad}, 
    and it is easy to see that they agree under the infinite suspension map.
    For the middle square, Theorem~\ref{thm:J} shows that the grafting bracket is compatible with the external Samelson bracket on total homotopy fibres. The bottom square commutes by the definition in~\eqref{eq:total-Samelson}. Finally, the triangle commutes by Lemma~\ref{lem:total-vs-canonical}, that gives the compatibility of total and canonical Samelson maps.
\end{proof}

\subsection{Embedding calculus (for long arcs)}
\label{subsec:istpy-calc}

\subsubsection{The setup}

We fix $d\geq3$ and a smooth $d$-dimensional manifold $M$ with nonempty boundary. Fix also a neat arc $\unknot\colon\D^1\hra M$, that is, a smooth embedding meeting $\partial M$ transversely and such that $\unknot(\D^1)\cap\partial M=\unknot(\partial\D^1)$. Let $\nu\unknot$ be a fixed tubular neighbourhood.

Moreover, fix a collection of disjoint subintervals $ J_i=[L_i,R_i]\subseteq \D^1$ with $0<L_i<R_i<L_{i+1}$, such that the sequence $R_i$ converges to $R_\infty<1$.

\begin{defn}\label{def:M-T}
    For any $n\geq1$ and a set $T\subseteq[n]\coloneqq\{0,1,\dots,n\}$ we define the \emph{punctured manifold}
    \[
        M_T\coloneqq (M\sm\nu\unknot)\cup\nu\unknot\big(\bigsqcup_{i\in T} J_i\big)
        =M\sm\nu\unknot\big(\D^1\sm \bigsqcup_{i\in T} J_i\big),
    \]
    and for any $T\subseteq T'$ the obvious inclusion map $i_{T,T'}\colon M_T\hra M_{T'}$. 
\end{defn}
In words, $M_T$ is $M$ minus the thickened punctured arc $\nu\unknot(\D^1\sm \bigsqcup_{i\in T} J_i)$; see Figure~\ref{fig:M-emptyset} for two examples. Equivalently, we puncture $M$ at the pieces of $\nu \unknot$ that are complimentary to $T$. 
\begin{defn}
    For any $S\subseteq\ul{n}\coloneqq\{1,\dots,n\}$ define the space $\Embp(J_0,M_{0S})$
    as the space of neat embeddings $\D^1\hra M_{0S}$ which near the boundary agree with $\unknot|_{J_0}$.
    
    For $k\notin S\subseteq\ul{n}$ we write $Sk\coloneqq S\sqcup\{k\}$, and define the map
    \[
    \rho_S^k\colon\Embp(J_0,M_{0S})\to\Embp(J_0,M_{0Sk})
    \]
    as the postcomposition with the inclusion $i_{0S,0Sk}\colon M_{0S}\hra M_{0Sk}$.
\end{defn}
\begin{lem}\label{lem:knots-J-0}\label{lem:lbt}
    The inclusion and the unit derivative give the respective homotopy equivalences
    \begin{align*}
        \Embp(J_0,M_0)
        &\xhookrightarrow{\sim}\Embp(\D^1,M),\\
        \deriv\colon\Embp(J_0,M_{0S})
        &\xrightarrow{\sim} \Omega\S^{d-1}\tm\Omega\Big(M\vee\bigvee_{i\in S}\S^{d-1}\Big).
    \end{align*}
\end{lem}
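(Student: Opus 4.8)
The plan is to prove the two equivalences separately: the first by a reparametrisation/isotopy-extension argument, the second by building the unit-derivative map out of the Smale--Hirsch theorem after first pinning down the homotopy type of the punctured manifold.

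For the inclusion $\Embp(J_0,M_0)\hra\Embp(\D^1,M)$ — which sends $f\colon J_0\hra M_0$ to the embedding of $\D^1$ equal to $f$ on $J_0$, to $\unknot$ on $\D^1\sm J_0$, and composed with $M_0\hra M$ on the target (well defined because $f=\unknot|_{J_0}$ near $\partial J_0$ inside $\nu\unknot$) — I would produce a homotopy inverse from a \emph{compression isotopy} of $M$: a path $j_s\colon M\hra M$ of codimension-$0$ embeddings with $j_0=\Id$, $j_1(M)=M_0$, reparametrising the tube near $\nu\unknot$ so that $j_1\circ\unknot$ has image $\unknot(J_0)$, and equal to the identity away from a neighbourhood of $\nu\unknot$ and of a collar of $\partial M$ along the arc. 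Post-composition with $j_1$ followed by restriction of the domain to $J_0$ (harmless since everything is standard on $\D^1\sm J_0$) lands in $\Embp(J_0,M_0)$, and the two composites with the inclusion are homotopic to the identities through $j_s$. The only thing left to check is that the space of auxiliary choices (tubular neighbourhood, collar, vector field generating $j_s$) is contractible — the standard uniqueness package — so the inverse is canonical up to contractible choice.

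For the second equivalence I would proceed in three steps. \emph{(i) Homotopy type of $M_{0S}$.} By Definition~\ref{def:M-T}, $M_{0S}$ is $M$ with the open tube $\nu\unknot$ removed over the $|S|+1$ intervals complementary to $\bigsqcup_{i\in 0S}J_i$ in $\D^1$. The two outermost of these meet $\partial M$, so removing their tubes deletes only open boundary collars and is a homotopy equivalence; each of the remaining $|S|$ inner ``gap'' intervals has tube an open $d$-ball in the interior of $M_0$, and deleting it wedges on a copy of $\S^{d-1}$. Hence $M_{0S}\simeq M\vee\bigvee_{i\in S}\S^{d-1}$, compatibly with the structure maps $\rho_S^k$, which go over to wedge-summand inclusions. \emph{(ii) Immersions.} By the Smale--Hirsch theorem, the space $\Imm_\partial(\D^1,M_{0S})$ of immersions of the arc rel the germ of $\unknot|_{J_0}$ at the endpoints is weakly equivalent to the space of bundle monomorphisms $T\D^1\to TM_{0S}$ rel boundary, i.e.\ to the based loop space of the unit tangent bundle of $M_{0S}$; combined with step (i) and the triviality of the bundle along the contractible arc $\unknot(J_0)$ this identifies it with $\Omega\S^{d-1}\tm\Omega(M\vee\bigvee_{i\in S}\S^{d-1})$, naturally in $S$. \emph{(iii) Embeddings vs.\ immersions.} The unit derivative $\deriv$ is the forgetful map $\Embp(J_0,M_{0S})\to\Imm_\partial(\D^1,M_{0S})$ followed by this equivalence, so it remains to see the forgetful map is a homotopy equivalence; this is where the dimension hypothesis enters, through the Goodwillie--Klein disentanglement of embeddings from immersions for a $1$-dimensional source, and here I would invoke \cite{K-thesis-paper}, where exactly this reduction is carried out.

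The step I expect to be the main obstacle is (iii) — the passage from embeddings to immersions of the arc — together with the bookkeeping that makes every identification above natural in $S$. The latter is essential because §\ref{subsec:istpy-calc} applies the lemma not to a single $S$ but to the whole cube $S\mapsto\Embp(J_0,M_{0S})$: all $\rho_S^k$ must be matched with wedge-summand inclusions, so that this cube becomes identified — up to the constant $\Omega\S^{d-1}$-factor — with $\Omega$ of the including cube $\{M\vee\bigvee_{i\in S}\S^{d-1}\}_S$, at which point Lemma~\ref{lem:deloop} and Theorem~\ref{thm:J-general} take over.
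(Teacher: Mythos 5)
Your first equivalence (the compression‐isotopy argument) and your step (i) identifying $M_{0S}\simeq M\vee\bigvee_{i\in S}\S^{d-1}$ are fine and standard; note the paper itself offers no argument for this lemma and simply cites \cite{K-thesis-paper}. The genuine gap is your step (iii), which is exactly where all the content sits. The forgetful map from embedded to immersed arcs is \emph{not} an equivalence, and Goodwillie--Klein cannot make it one: their disjunction results only give a finite connectivity estimate for $\Embp(C,M)\to T_n(C,M)$ (in particular for $T_1=\Imm$), and that estimate is vacuous at $d=3$, whereas the lemma is asserted for all $d\geq3$; if ``embeddings $\simeq$ immersions for a $1$-dimensional source'' were a theorem, the entire Taylor tower studied in this paper would be trivial. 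Your argument also proves too much, precisely because it never uses where the punctures of $M_{0S}$ lie relative to $J_0$: combined with your first equivalence it would give $\Embp(\D^1,M)\simeq\Omega\S^{d-1}\tm\Omega M$ (the case $S=\emptyset$), which already fails for $M=\D^3$, where $\pi_0\Embp(\D^1,\D^3)$ is the knot monoid while the right-hand side is connected; for $d\geq4$ the map $\Embp(\D^1,\D^d)\to\Imm_\partial(\D^1,\D^d)$ is likewise well known not to be an equivalence (its homotopy fibre is only $(2d-7)$-connected and nontrivial). So the proposed reduction to Smale--Hirsch plus ``disentanglement'' cannot be the proof, and deferring exactly this step to \cite{K-thesis-paper} with an incorrect mechanism attached leaves the proposal without its key ingredient -- it is, in effect, the same citation the paper makes, but with a wrong explanation.

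What actually makes $\deriv$ an equivalence is the interior puncture adjacent to the terminal end of $J_0$: in every case where the statement is used (the cube entries $\Embp(J_0,M_{0Sn+1})$ in Theorem~\ref{thm:K-thesis-paper}) the endpoint $\unknot(R_0)$ lies on the cap of a removed tube whose other side abuts a retained interval $J_j$. This lets one treat that end of the arc as effectively free: gluing the adjacent tube piece back in, the space of embedded arcs with fixed initial germ and free terminal end is contractible (shrink the arc towards its initial germ), and restriction/evaluation of the terminal $1$-jet is a fibration over (a space equivalent to) the unit tangent sphere bundle, whose fibre is the punctured-arc space; this restriction--fibration argument, not immersion theory, produces the identification of $\Embp(J_0,M_{0S})$ with a loop space -- for $d=3$ the unknotting it encodes is the light-bulb trick, invisible to any immersion-theoretic argument, and this is the kind of argument carried out in \cite{K-thesis-paper}. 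Two further points: the product splitting $\Omega(\text{sphere bundle})\simeq\Omega\S^{d-1}\tm\Omega M_{0S}$ needs a section of the unit tangent bundle (available since $\partial M\neq\emptyset$, e.g.\ extending the tangent field of $\unknot$), not merely ``triviality of the bundle along the contractible arc'' as in your step (ii); and the naturality in $S$ that you rightly flag as essential for §\ref{subsec:istpy-calc} has to be arranged through these choices, which your outline does not address.
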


See \cite{K-thesis-paper} for a proof, which is not hard.

\subsubsection{Homotopy type of the layers}

The fibres $F_{n+1}(M)\coloneqq\hofib(T_{n+1}(M)\to T_n(M))$ in Goodwillie's punctured knots model of the Taylor tower for $\Embp(\D^1,M)$ can be described as~\cite{K-thesis-paper}:
    \[
        F_{n+1}(M)\coloneqq
        \tofib_{S\subseteq\ul{n}}\big(\Embp(J_0,M_{0Sn+1});\rho^k_{Sn+1}\big).
    \]
Using that the maps $\rho^k_S$ are inclusions gives the following characterisation, as in \eqref{eq:tofib-pt}: a point $f\in F_{n+1}(M)$ is equivalently described by a map $K\colon I^{\ul{n}}\to \Embp(J_0,M_{0\ul{n+1}})$ such that 
    ($\diamond$) if $t_i=0$ for some $i\in\ul{n}$, then $K_{\vec{t}}=\unknot|_{J_0}$, and
    ($\diamond$) if $t_i=1$ for some $i\in\ul{n}$, then $K_{\vec{t}}$ misses $\unknot(J_i)$.

In other words, $f$ is an $n$-parameter family of re-embeddings of $J_0$ in the maximally punctured manifold $M_{0\ul{n+1}}$. The first condition says that $K$ is trivial on $0$-faces, and the second that the restriction of $K$ onto the $1$-face labelled by $S$ must miss $\unknot(\bigsqcup_{i\in S} J_i)$.
\begin{figure}[!htbp]
    \centering
    \includegraphics[width=0.72\linewidth]{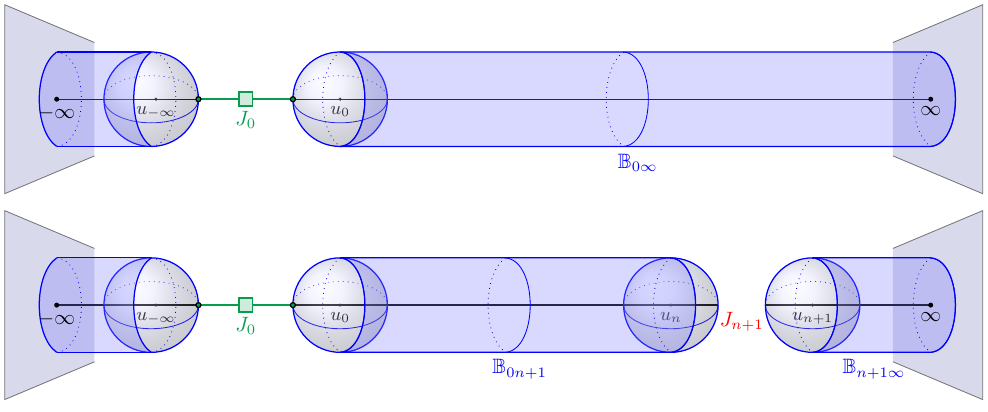}
    \caption{The manifold $M_T$, for $T=\{0\}$ and $T=\{0,n+1\}$ respectively, is the complement of the blue material. An element of $\Embp(J_0,M_{0S})$ is an arc embedded in $M_{0S}$ with endpoints like $J_0$. The map $\rho_S^k$ for $S=\emptyset$ and $k=n+1$ is simply the inclusion of the arcs as in the top picture to the space of arcs in the bottom picture.}
    \label{fig:M-emptyset}
\end{figure}

Note that each space in this cube is a loop space by the second equivalence in Lemma~\ref{lem:lbt}. However, $\rho^k_{Sn+1}$ \emph{do not} correspond to loop space maps. If they did, one could use Lemma~\ref{lem:deloop} to deloop the total homotopy fibre $n$ times. Nevertheless, one can first deloop the cube $n$ times directly, by proving an analogue of Lemma~\ref{lem:deloop} for embeddings, and then apply Lemma~\ref{lem:lbt}, giving the following.
\begin{thm}[{\cite{K-thesis-paper}}]
\label{thm:K-thesis-paper}
    There are explicit homotopy equivalences 
    \begin{align*}
        F_{n+1}(M) \quad\quad
        &=\tofib_{S\subseteq\ul{n}}
            \big(\Embp(J_0,M_{0Sn+1}); \rho^k_{Sn+1}\big)
        \\
        & \xrightarrow[\sim]{\chi}
        \Omega^n\tofib_{S\subseteq\ul{n}}
            \big(\Embp(J_0,M_{0Sn+1}); \lambda^k_{Sn+1}\big)
        \\
        & \xrightarrow[\sim]{\deriv} 
        \Omega^n\tofib_{S\subseteq\ul{n}}
            \big(\Omega (M\vee\S_S); \Omega \kappa^k_S\big)
        \quad\quad
        =\Omega^{n+1}\tofib
            \big(M\vee\S_{\ul{n}}; \kappa\big).
    \end{align*}
\end{thm}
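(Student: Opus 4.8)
The plan is to assemble the stated chain of equivalences from three ingredients, each handled on its own, since the composite is then immediate. First I would recall, from the work on Goodwillie's punctured-knots model, the identification of the layer $F_{n+1}(M)$ with the total homotopy fibre of the $n$-cube $S\mapsto\Embp(J_0,M_{0Sn+1})$ with structure maps $\rho^k_{Sn+1}$ given by inclusions of punctured manifolds (Definition~\ref{def:M-T}); this is taken as input. The key observation that makes the delooping possible is the same as in Lemma~\ref{lem:deloop}: each inclusion map $\rho_S^k$ of punctured manifolds admits a retraction-like map $\lambda_S^k$ in the homotopy category, coming from the fact that pushing $J_0$ off $\nu\unknot(J_k)$ is possible (codimension $d\geq3$, so embeddings into $M_{0S}$ and into $M_{0Sk}$ differ by a loop in a configuration-type space). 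Concretely one builds $\lambda^k_{Sn+1}$ so that $\lambda^k\circ\rho^k\simeq\Id$, which is the embedding-space analogue of the relation $\kappa_S^T\circ\iota_S^T=\Id$ used to prove Lemma~\ref{lem:deloop}.

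Second, with that structure in hand, I would prove the embedding-space analogue of Lemma~\ref{lem:deloop}: since $\lambda^k$ is a left homotopy inverse of each edge map $\rho^k$, one gets $\hofib(\rho^k)\simeq\Omega\hofib(\lambda^k)$, and iterating this over all $n$ coordinates of the cube (using that a total homotopy fibre is an iterated homotopy fibre, and that homotopy fibres commute with loops) yields an explicit equivalence
\[
    \chi\colon F_{n+1}(M)=\tofib_{S\subseteq\ul{n}}\big(\Embp(J_0,M_{0Sn+1});\rho^k_{Sn+1}\big)
    \xrightarrow{\ \sim\ }
    \Omega^n\tofib_{S\subseteq\ul{n}}\big(\Embp(J_0,M_{0Sn+1});\lambda^k_{Sn+1}\big).
\]
This is literally the proof of Lemma~\ref{lem:deloop} carried out with $\iota,\kappa$ replaced by $\rho,\lambda$; the content is entirely formal once the maps $\lambda^k$ and the homotopies $\lambda^k\circ\rho^k\simeq\Id$ are fixed.

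Third, I would apply the derivative equivalence of Lemma~\ref{lem:lbt}, namely $\deriv\colon\Embp(J_0,M_{0S})\xrightarrow{\sim}\Omega\S^{d-1}\tm\Omega(M\vee\bigvee_{i\in S}\S^{d-1})$, naturally in $S$. The point to verify is that under $\deriv$ the maps $\lambda^k_{Sn+1}$ correspond to the loops $\Omega\kappa^k_S$ of the collapse maps $\kappa_S^k\colon M\vee\S_{Sk}\to M\vee\S_S$ of the collapsing cube, up to homotopy; the free $\Omega\S^{d-1}$ factor is constant across the cube and contributes a $\tofib$ of a constant cube, hence drops out. Granting this, naturality of $\deriv$ gives
\[
    \Omega^n\tofib_{S\subseteq\ul{n}}\big(\Embp(J_0,M_{0Sn+1});\lambda^k_{Sn+1}\big)
    \xrightarrow{\ \sim\ }
    \Omega^n\tofib_{S\subseteq\ul{n}}\big(\Omega(M\vee\S_S);\Omega\kappa^k_S\big)
    =\Omega^{n+1}\tofib\big(M\vee\S_{\ul{n}};\kappa\big),
\]
the last equality being the identity $\Omega\tofib(Y_\bull;y)\cong\tofib(\Omega Y_\bull;\Omega y)$ from the remark after~\eqref{eq:tofib-pt}. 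Composing the three arrows gives the theorem.

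The main obstacle is the second and third points together: identifying the left homotopy inverses $\lambda^k$ geometrically and checking that $\deriv$ carries them to $\Omega\kappa^k_S$. This is where codimension $\geq3$ is genuinely used (to disjoin the arc from a puncture, and to make the resulting homotopy well-defined up to contractible choice), and it is the step that requires the careful geometric input of~\cite{K-thesis-paper} rather than formal cube manipulation; everything else is bookkeeping with iterated homotopy fibres and naturality.
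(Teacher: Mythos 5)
The paper itself does not prove this theorem: it is quoted from \cite{K-thesis-paper}, and the strategy sketched in the paragraph preceding it is exactly the one you outline — take the punctured-knots description of $F_{n+1}(M)$ as input, deloop that cube of embedding spaces directly by an analogue of Lemma~\ref{lem:deloop}, then apply the levelwise equivalence $\deriv$ of Lemma~\ref{lem:lbt} together with $\Omega\tofib\cong\tofib\,\Omega$. So your architecture matches the intended proof, and you correctly locate the non-formal content (the maps $\lambda^k_{Sn+1}$ and their compatibility with $\deriv$) in the cited paper.

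Two concrete cautions, though. First, your proposed origin of $\lambda^k_{Sn+1}$ — ``pushing $J_0$ off $\nu\unknot(J_k)$'' — is not a step that would work as stated: $\nu\unknot(J_k)$ is a codimension-zero ball, and there is no canonical, continuous-in-families choice of push-off, so this recipe does not define a map $\Embp(J_0,M_{0Skn+1})\to\Embp(J_0,M_{0Sn+1})$. Moreover, whatever the precise construction in \cite{K-thesis-paper}, under $\deriv$ the map $\lambda^k$ must become the collapse $\Omega\kappa^k_S$, i.e.\ geometrically it corresponds to \emph{filling a puncture back in} (absorbing $\nu\unknot(J_k)$ together with an adjacent removed gap into a neighbouring retained piece and re-identifying the result with $M_{0Sn+1}$ by sliding along $\unknot$), which is a strictly defined postcomposition satisfying $\lambda^k\circ\rho^k\simeq\Id$ — not a push-off of the arc. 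Second, fixing each $\lambda^k$ and a homotopy $\lambda^k\rho^k\simeq\Id$ edgewise is not quite enough for the $n$-fold delooping: the $\lambda^k$ must assemble into a contravariant cube and the homotopies must be coherent across faces, which is part of what the cited appendix supplies. Relatedly, Lemma~\ref{lem:lbt} applied to $M_{0Sn+1}$ yields spheres indexed by $S\cup\{n+1\}$, so reaching the cube $\Omega(M\vee\S_S)$, $S\subseteq\ul{n}$, of the statement requires disposing of the $(n+1)$-st meridian as well, not only of the constant $\Omega\S^{d-1}$ factor; this is additional non-formal bookkeeping your sketch glosses over.
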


Note that the final contravariant cube in Theorem~\ref{thm:K-thesis-paper} is the collapsing cube from §\ref{subsec:J-general} for $Y_0=M$ and $Y_i=\Sigma X_i$ with $X_i=\S^{d-2}$.
Therefore, we can use the Hilton--Milnor equivalence~\eqref{eq:HM-gen} to describe the homotopy type of $F_{n+1}(M)$ for any $M$ and $n$; see~\cite[Thm.B]{K-thesis-paper}. In particular, the generalised Johnson's map $J$ from Theorem~\ref{thm:J-general} applies as follows.
\begin{thm}
\label{thm:Fn-thesis}
    For any $d$-manifold $M$ with boundary, $d\geq3$ and $n\geq1$, the layer $F_{n+1}(M)$ in the Taylor tower for $\Embp(\D^1,M)$ admits $(n+1)(d-2)$-connected maps 
    \[
        \Omega^n\prod_{w\in NB\LL(n)} w(\Sigma^{d-2}(\Omega M)_+)
        \xlongrightarrow{\prod w_{F_{n+1}}}
        F_{n+1}(M)
        \xlongrightarrow{J\circ\deriv\circ\chi}
        \Omega^{n+1}\Map_*(\GPC_{\ul{n}}, (\Sigma^{d-1}(\Omega M)_+)^{\wedge n}).
    \]
    In particular, on $\pi_{n(d-3)}$ we have a pair of $\Sigma_n$-isomorphisms $\pi_{n(d-3)}F_{n+1}(M)\cong\Lie_{d-2}(n)\otm\Z[\pi_1M^{\tm n}]$.
\end{thm}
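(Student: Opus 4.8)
The plan is to deduce the statement directly from Theorem~\ref{thm:K-thesis-paper}, Theorem~\ref{thm:J-general}, and Example~\ref{ex:Lie-pi}; the only genuine work is the bookkeeping of loop-counts and the check of $\Sigma_n$-equivariance.

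\emph{Step 1 (reduce to a collapsing cube).} By Theorem~\ref{thm:K-thesis-paper} the composite $\deriv\circ\chi$ is a homotopy equivalence $F_{n+1}(M)\xrightarrow{\sim}\Omega^{n+1}\tofib(M\vee\S_{\ul n};\kappa)$, where $\S_{\ul n}=\bigvee_n\S^{d-1}$; the target is exactly $\Omega^{n+1}$ of the collapsing cube of §\ref{subsec:J-general} with $Y_0=M$ and $Y_i=\Sigma X_i$ for $X_i=\S^{d-2}$, so that $X_i^M=\Sigma^{d-2}(\Omega M)_+$ is $(d-3)$-connected. Here I would first verify that $\chi$ and $\deriv$ are $\Sigma_n$-equivariant for the action permuting the subintervals $J_1,\dots,J_n$ on the left and the wedge summands $\S^{d-1}$ on the right: this is visible from the constructions in \cite{K-thesis-paper}, since the cube is indexed by subsets of $\ul n$, the delooping equivalence $\chi$ is assembled from the $I^{\ul n}$-coordinates symmetrically, and $\deriv$ is applied vertexwise.

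\emph{Step 2 (the two maps).} Set $c=d-3$, so $c+1=d-2$. Transporting the generalised Johnson map $J^M$ of Theorem~\ref{thm:J-general} across the equivalence of Step~1 (and looping $n+1$ times) yields the $\Sigma_n$-equivariant map $J\circ\deriv\circ\chi$ into $\Omega^{n+1}\Map_*(\GPC_{\ul n},(\Sigma^{d-1}(\Omega M)_+)^{\wedge n})$; its connectivity is the one provided by Theorem~\ref{thm:J-general}, namely $(n+1)(c+1)=(n+1)(d-2)$, carried through the loops. Dually, the maps $w_{F_{n+1}}$ for $w\in NB\LL(n)$ are obtained by looping the Hilton--Milnor factors $hm_w$ of \eqref{eq:HM-gen} and transporting them to $F_{n+1}(M)$, so that $\prod w_{F_{n+1}}$ is (the $n$-fold loop of, up to the suspension units built into the statement's source spaces $w(\Sigma^{d-2}(\Omega M)_+)$) the Hilton--Milnor equivalence $hm_{NB}$; the stated connectivity is then read off from Corollary~\ref{cor:hm}, since the words of length $>n$ in $NB\LL(n)$ first contribute in degree $(n+1)(d-2)$.

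\emph{Step 3 (the first nonzero homotopy group).} By Step~1, $\pi_{n(d-3)}F_{n+1}(M)\cong\pi_{n(d-2)+1}\tofib(M\vee\S_{\ul n};\kappa)=\pi_{n(d-2)}\Omega\tofib(M\vee\S_{\ul n};\kappa)$, which Example~\ref{ex:Lie-pi} identifies $\Sigma_n$-equivariantly with $\Lie_{d-2}(n)\otm\Z[\pi_1M^{\tm n}]$. The \emph{pair} of isomorphisms in the statement then records that this group is reached isomorphically from both sides: from the left, only the $(n-1)!$ length-$n$ Lyndon words matter, each contributing $\pi_{n(d-2)}\bigl((\Sigma^{d-2}(\Omega M)_+)^{\wedge n}\bigr)\cong\Z[\pi_1M^{\tm n}]$ by the Hurewicz theorem, with the $\Sigma_n$-action assembling these $(n-1)!$ summands into $\Lie_{d-2}(n)$ via the $\GPC_{\ul n}$-model exactly as in Theorem~\ref{thm:J-collapsing}; from the right, the same group is computed by the Hurewicz-plus-Spanier--Whitehead argument already run in the proof of Theorem~\ref{thm:J-general} (and of Corollary~\ref{cor:ex-Lie}).

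\emph{Main obstacle.} No step is deep, but two points need care. The first is the $\Sigma_n$-equivariance in Step~1, which is only stated non-equivariantly in Theorem~\ref{thm:K-thesis-paper} and must be extracted from the constructions. The second is the connectivity bookkeeping in Step~2: one must reconcile the ``small'' source $\Omega^n\prod_w w(\Sigma^{d-2}(\Omega M)_+)$ with the Hilton--Milnor source $\prod_w\Omega\Sigma w(-)$, using that the relevant suspension units are equivalences in the metastable range in play, and check that the claimed range survives the $(n+1)$-fold looping; the case $d=3$ is the tightest, and there one leans on the explicit identification of $\pi_{n(d-3)}$ from Step~3 rather than on connectivity estimates alone.
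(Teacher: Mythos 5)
Your proposal is correct and follows essentially the same route as the paper, whose (implicit) proof is exactly the combination you give: the equivalence $\deriv\circ\chi$ of Theorem~\ref{thm:K-thesis-paper} identifies $F_{n+1}(M)$ with $\Omega^{n+1}\tofib(M\vee\S^{d-1}_{\ul{n}};\kappa)$, to which the generalised Johnson map and connectivity of Theorem~\ref{thm:J-general}, the Hilton--Milnor equivalence~\eqref{eq:HM-gen}, and Example~\ref{ex:Lie-pi} are then applied verbatim. The caveats you flag (equivariance of $\deriv\circ\chi$, deferred to \cite{K-thesis-paper}, and the suspension-unit/loop bookkeeping in the connectivity range) are exactly the points the paper also treats at this level of detail, so nothing further is needed.
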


\subsubsection{Graspers}

In this section we describe classes in $\pi_k\Embp(C,M)$ that correspond to the classes in $\pi_{n(d-3)}F_{n+1}(M)\cong\Lie(n)\otm\Z[\pi_1M^{\tm n}]$, that is, realise the lowest nontrivial homotopy groups of the layers.
A nonzero class in $\pi_k\Embp(C,M)$ (an arbitrary basepoint $\u\colon \D^1\hra M$ is omitted from the notation) is represented by a $k$-parameter family of embeddings of $C$ into $M$ which cannot be trivialised through such families. 

A \emph{meridian} of $\u$ is a sphere $\S^{d-2}\hra M\sm\u(C)$ defined as the boundary of a normal disk at some point $p\in\u(C)$. We can form the Samelson bracket $\S^{n(d-3)}\to\Omega(M\sm\u(C))$ of $n$ disjoint meridians, whose adjoint is the Whitehead bracket $W\colon\S^{n(d-3)+1}\to M\sm\u(C)$. We hope to use this map to obtain an $n(d-3)$-parameter family of embeddings of $C$ based at $\u$. Namely, if we could foliate $W(\S^{n(d-3)+1})$ by embedded circles $\mu_{\vec{t}}$, we could then ambiently connect sum a fixed subarc of $\u$ into each of $\mu_{\vec{t}}$. Such a family indeed exists, as we showed in \cite{K-families} and will summarise now.

The first ingredient is the following result, that constructs a universal such family for links. Write $\S^1=\S^1_-\cup\S^1_+$ as the union of the northern and southern semicircles. Let $a_i\colon\D^1\hra\ball^d$ be disjoint neatly embedded arcs, $1\leq i\leq n$ (note that this is unique up to isotopy for $d\geq4$). Let $m_i\colon\S^{d-2}\hra\ball^d\sm\bigsqcup_{i=1}^na_i$ be the meridian of $a_i$ in $\ball^d$, and assume they are mutually disjoint. Moreover, fix an arc $a_0\colon\D^1\hra\partial\ball^d$. Recall that $w(x_i)$ is the canonical Samelson map~\eqref{eq:can-Sam}. 

\begin{thm}[{\cite[Def.5.5-5.10,Thm.6.6]{K-families}}]
\label{thm:emb-comm}
    For any $d\geq3$ there exists a map $\wt{w}(m_i)$ so that the following square commutes up to homotopy
\[
\begin{tikzcd}
    \S^{n(d-3)}
    \dar[swap]{w(x_i)}\rar{\wt{w}(m_i)} 
    & \Emb((\S^1,\S^1_+),(\ball^d\sm\bigsqcup_{i=1}^n a_i,a_0))
    \dar[hook]\\
    \Omega\bigvee_{i=1}^n\S^{d-2}_i
    \rar[hook]{\Omega\bigvee_{i=1}^n m_i}[swap]{\sim}
    & \Omega(\ball^d\sm\bigsqcup_{i=1}^n a_i).
\end{tikzcd}
\]
\end{thm}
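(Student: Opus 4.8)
The plan is to construct $\wt w(m_i)$ by induction on the Lie tree underlying $w\in\Lie(n)$, mirroring the inductive definition of the Samelson bracket (Definition~\ref{def:Samelson}) and keeping track at each stage of the \emph{underlying based loop} of every embedded circle, so that commutativity of the square can be read off directly. As a preliminary one records the shape of the bottom map: the complement of $n$ disjoint trivially embedded arcs deformation retracts onto the wedge of their meridian spheres, so $\Omega\bigvee_{i=1}^n m_i$ is an equivalence, and the map we must lift along the ``forget that the loop is embedded'' inclusion $\Emb((\S^1,\S^1_+),(\ball^d\sm\bigsqcup a_i,a_0))\hra\Omega(\ball^d\sm\bigsqcup a_i)$ is precisely the adjoint of the iterated Whitehead product of the $n$ meridians, namely $(\Omega\bigvee_{i} m_i)\circ w(x_i)$. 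In the base case $w=x^i$ (so $n=1$ and $\S^{n(d-3)}=\S^{d-3}$) one sweeps out the meridian sphere $m_i(\S^{d-2})$ (identifying $\S^{d-2}=\Sigma\S^{d-3}$) by an embedded-circle family realising the adjoint $x_i\colon\S^{d-3}\to\Omega\S^{d-2}$ of the identity, and carries this family out to $a_0$ along a fixed embedded tube and its parallel push-off, producing the prescribed semicircle $\S^1_+\mapsto a_0$; the underlying loop of the resulting family is $m_i\circ x_i$ by construction.

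For the inductive step $w=[w_1,w_2]$ with $S_1\sqcup S_2=\ul n$, one isotopes the arcs indexed by $S_1$ and those indexed by $S_2$ into two disjoint round sub-balls of $\ball^d$, each joined to $a_0$ by one fixed tube; applying the inductive hypothesis inside the sub-balls gives families of embedded circles with underlying loops $(\Omega\bigvee_{i\in S_b}m_i)\circ w_b(x_i)$ for $b=1,2$. One then forms their \emph{ambient commutator}: an embedded-circle realisation of the loop word $\gamma_1\gamma_2\gamma_1^{-1}\gamma_2^{-1}$, obtained by running four mutually disjoint parallel strands (two copies each of the two families, suitably oriented) through the tubes. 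This is exactly the \emph{grasper} construction of~\cite{K-families}, and the general-position inequality $1+1<d$, i.e.\ $d\geq3$, is precisely what makes the four strands disjointable. When one input family sits at the basepoint the ambient commutator collapses, through the canonical nullhomotopy of the commutator loop used in~\eqref{eq:nullhtpy}, to a constant family, so in the mapping-cone model of the smash product (Definition~\ref{def:smash}) the construction descends to a based map on $\S^{|S_1|(d-3)}\wedge\S^{|S_2|(d-3)}\cong\S^{n(d-3)}$. Since the commutator-of-loops operation used here is literally the bracket $[-,-]$ of Definition~\ref{def:Samelson}, and $w(x_i)=[-,-]\circ(w_1(x_i)\wedge w_2(x_i))$, tracking underlying loops through the induction produces the asserted homotopy-commutative square.

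The main difficulty is coherence of the geometric choices. The ambient commutator must be engineered so that its four strands remain disjoint \emph{simultaneously over the whole parameter space}, the tubular neighbourhoods and connecting tubes at each inductive level must be chosen compatibly with those used one level down, and the degenerate faces must match the canonical nullhomotopies on the nose so that the quotient to $\S^{n(d-3)}$ is honestly based. The borderline case $d=3$ must be handled separately and is of a different flavour: there $\S^{n(d-3)}=\S^0$, so $\wt w(m_i)$ merely selects a single embedded circle in the complement of the $n$-component unlink realising, in the fundamental group, the iterated commutator of the meridians -- a Brunnian-type class -- and this circle is produced by a direct geometric construction rather than by thickening a suspension coordinate.
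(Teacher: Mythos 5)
First, note that the paper does not actually prove Theorem~\ref{thm:emb-comm}: it is imported verbatim from \cite{K-families} (Def.~5.5--5.10, Thm.~6.6), and the surrounding text only sketches the strategy (represent the iterated Whitehead bracket of meridians by something that can be foliated by embedded circles, then connect-sum a subarc of the basepoint knot into that foliation -- the grasper construction). Your outline -- induction over the Lie word, realising each bracket by an ``ambient commutator'' of embedded-circle families made from parallel push-offs routed through tubes to $a_0$, with degenerate faces collapsing via the cut-short nullhomotopy \eqref{eq:nullhtpy} so the map descends to the smash $\S^{n(d-3)}$ -- is in the same spirit as that construction, so the overall architecture is reasonable.

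However, there is a genuine gap at the one point where the theorem has real content: you justify embeddedness of the commutator circles by general position, ``$1+1<d$, i.e.\ $d\geq3$, is precisely what makes the four strands disjointable.'' That inequality only controls a \emph{single} loop ($k=0$ parameters). For the whole family one needs every member over $\S^{n(d-3)}$ to be injective, and the relevant transversality count is $1+1+n(d-3)<d$, which fails for every $n\geq2$ as soon as $d\geq4$ (e.g.\ $n=2$, $d=4$: a generic $2$-parameter family of loops in $\ball^4$ does have self-intersecting members). So embeddedness over the entire parameter space cannot come from genericity; it must be built in by hand -- strands constrained to lie on the meridian spheres and on chosen disjoint parallel copies of them, push-offs chosen along the spheres (so that the pushed-off strand is still homotopic to the original in the complement, which is needed for the underlying loop to be $w(x_i)$ and not a corrected class), and junctions routed through explicitly disjoint sub-tubes. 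This, together with the requirement you correctly flag but do not resolve -- that on degenerate faces the band-sum circle (a loop glued to its push-off) be canonically null-\emph{isotoped}, not merely null-homotoped, compatibly with \eqref{eq:nullhtpy} and coherently across all inductive levels, so that the family is honestly based on $\S^{n(d-3)}$ -- is exactly the content of Definitions~5.5--5.10 of \cite{K-families}. As written, your proposal attributes the key step to a mechanism that does not work and defers the actual construction, so it does not yet constitute a proof.
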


The second ingredient translates these families $\wt{w}(m_i)$ into embeddings $\unknot\colon C\hra M$ as follows.
\begin{defn}
\begin{itemize}
\item 
    A \emph{grasper} of degree $n$ is an embedding $\TG_n\colon\ball^d\hra M$, which is disjoint from $\partial M$, and which intersects the basepoint knot $\u$ in fixed intervals
    \begin{equation}\label{eq:G-condition}
        \TG_n(\ball^d)\cap\u(C)=\u(J_i)=\TG_n(a_i)
    \end{equation}
    for some fixed arcs $a_i\colon\D^1\hra\ball^d$ with $0\leq i\leq n$.
\item
    For a grasper $\TG_n\colon\ball^d\hra M$ and $1\leq i\leq n$ denote by $p_i\in C$ the point such that $\u(p_i)=\TG_n(a_i(0))$. We define $g_i\in\pi_1M$ as the homotopy class of the loop given by $\u|_{[0,p_i]}$ followed by the image under $\TG_n$ of the whisker for the meridian ball to $a_i$. We say that the tuple $g_{\ul{n}}\coloneqq(g_1,\dots,g_n)\in(\pi_1M)^n$ \emph{decorates} the grasper $\TG_n$, and write $\TG_{g_{\ul{n}}}$ instead.
\item
    The \emph{grasper surgery of type $w$ along $\TG_{g_{\ul{n}}}$} is the family of embeddings
    \begin{align*}
       \wt{w}(\TG_{g_{\ul{n}}})\colon\S^{n(d-3)} & \ra\Embp(C,M),\\ 
        \vec{\tau}\; &\mapsto \quad \u|_{C\sm J_0}\cup \big(\TG_n\circ \wt{w}(m_i)(\vec{\tau})|_{\S^1_-}\big).\qedhere
    \end{align*}
\end{itemize}
\end{defn}

  This generalises to all embedding spaces with $1$-dimensional source constructions from for classical knot theory: the Gusarov--Habiro clasper surgery \cite{Gusarov,Habiro} and Conant--Teichner grope cobordism~\cite{CT1}. In fact, the name comes from \emph{grasper=grope+clasper=grasp+er}. Both are closely related to iterated commutators in the fundamental group of the complement of a knot (its lower central series). A grope contains a clasper as a subset, but also carries additional data; in our generalisation this data is used to show that grasper classes vanish in $T_n(M)$.

Recall that $\Lie(n)$ is a quotient of $\Z[\Tree(n)]$ by the relations \eqref{eq:L-as-ihx}. Define the homomorphism 
    \[
    \realmap_n\colon
    \Z[\Tree(n)\otm\pi_1M^n]\ra\pi_{n(d-3)}\Embp(C,M)
    \]
    by linearly extending the map $(w, g_{\ul{n}})\mapsto [\wt{w}(\TG_{g_{\ul{n}}})]$,
    where $\TG_n$ is any grasper of degree $n$ in $M$ relative to $\u$ and decorated by $g_{\ul{n}}$.
By \cite{K-families} this vanishes on the relations $\AS,\IHX$, and precisely realises the classes in the layer.
\begin{thm}[{\cite{K-families}}]
\label{thm:K-families}
    The grasper surgery map $\realmap_n$ fits into the commutative diagram
    \[\begin{tikzcd}
            \Lie(n)\otm\Z[\pi_1M^n] \rar{\realmap_n}\dar{\ol{ev}_{n+1}}[swap]{\cong} 
            &\pi_{n(d-3)}(\Embp(\D^1,M);c) \dar{ev_{n+1}}\rar[two heads]{ev_n} 
            &\pi_{n(d-3)}T_n(\D^1,M) \dar[equals]\\
            \pi_{n(d-3)}F_{n+1}(M) \rar 
            &\pi_{n(d-3)}T_{n+1}(\D^1,M) \rar[two heads]{p_{n+1}} 
            &\pi_{n(d-3)}T_n(\D^1,M)
    \end{tikzcd}\]
    For a Lie tree $w\in\Lie(n)$ and $g_i\in\pi_1M$ the family $\realmap_n(w,g_1,\dots,g_n)\colon\S^{n(d-3)}\to\Embp(\D^1,M)$ is adjoint to the Samelson bracket of the classes $m_i^{g_i}\colon\S^{d-1}\to M$ that are meridians of $c$ based using $g_i$.
\end{thm}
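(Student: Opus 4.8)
The plan is to reduce the theorem to the universal Samelson family of Theorem~\ref{thm:emb-comm}, the homotopy‑type identification of the layer from Theorem~\ref{thm:Fn-thesis}, and the punctured‑knots model of Theorem~\ref{thm:K-thesis-paper} as a bridge, with a cubical refinement of the grasper family (the embedding‑calculus analogue of the total Samelson maps of §\ref{subsec:total-Sam}) as the technical core. First I would dispatch the final sentence, the adjointness claim, which is essentially formal: the grasper $\TG_n\colon\ball^d\hra M$ carries the meridians $m_i$ of the model arcs $a_i$ to meridians of $c$ in $M$, and the whiskers built into $\TG_n$ record precisely the basings $g_i\in\pi_1M$, so that $\TG_n$ intertwines the universal family $\wt{w}(m_i)$ with the Samelson bracket $w(m_i^{g_i})$ of the based meridians. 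Combining this with the commuting square of Theorem~\ref{thm:emb-comm} shows that $\realmap_n(w,g_1,\dots,g_n)=\wt{w}(\TG_{g_{\ul{n}}})$ is adjoint to $w(m_i^{g_i})$, as claimed.

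Next I would produce the vertical lift $\ol{ev}_{n+1}$. The key point is that $\wt{w}(m_i)$, being an \emph{iterated} Samelson bracket, carries by its very inductive construction via the canonical nullhomotopies \eqref{eq:nullhtpy} a canonical cubical refinement, exactly in the spirit of the total Samelson maps $w(x_i)^\bull$. Concretely, the $n$ arcs $a_1,\dots,a_n$ of the grasper correspond to the punctures $J_1,\dots,J_n$ in the punctured‑knots model of $F_{n+1}(M)$, and the nullhomotopies of the letters of $w$ assemble into a map $\S^{n(d-3)}\times I^{\ul{n}}\to\Embp(J_0,M_{0\ul{n+1}})$ which is constant at $c|_{J_0}$ on $0$‑valued faces (there the corresponding leg of $w$ has collapsed to the basepoint) and, on the $1$‑valued face labelled by $S\subseteq\ul{n}$, misses $c(\bigsqcup_{i\in S}J_i)$ (there each fully‑grown leg of $w$ lets the relevant meridian be slid off $J_i$). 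This is precisely a map $\S^{n(d-3)}\to F_{n+1}(M)$ whose composite with the projection $\proj$ to the initial vertex recovers $\realmap_n(w,g_{\ul{n}})$, by the analogue of Lemma~\ref{lem:total-vs-canonical}. Setting $\ol{ev}_{n+1}(w,g_{\ul{n}})$ to be its class makes the left square of the diagram commute by construction; the right square then commutes because $p_{n+1}\circ\mathrm{incl}=0$ for the fibre sequence $F_{n+1}(M)\to T_{n+1}\to T_n$, which in particular yields the vanishing $ev_n\circ\realmap_n=0$ of grasper classes in $T_n(\D^1,M)$.

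It remains to check that $\realmap_n$, and hence $\ol{ev}_{n+1}$, factors through $\Lie(n)\otm\Z[\pi_1M^n]$ and that $\ol{ev}_{n+1}$ is an isomorphism. It kills the $\AS$ and $\IHX$ relations \eqref{eq:L-as-ihx} because iterated Samelson brackets satisfy graded antisymmetry and Jacobi, which under the identification of Theorem~\ref{thm:Fn-thesis} (together with Lemma~\ref{lem:Lie-d(n)}) are precisely the relations defining the target. For surjectivity and injectivity, I would transport the cubical family constructed above across the equivalences $\chi$ and $\deriv$ of Theorem~\ref{thm:K-thesis-paper} into $\pi_{n(d-3)}\Omega^{n+1}\tofib(M\vee\S^{d-1}_{\ul{n}};\kappa)$ and check that it agrees with the total Samelson map $w(x_i^M)^\bull$ applied to $w\otm g_{\ul{n}}$; by Corollary~\ref{cor:hm} together with Theorem~\ref{thm:J-general} and Example~\ref{ex:Lie-pi}, running over a basis $w_\sigma$, $\sigma\in\Sigma_{n-1}$, of $\Lie(n)$ and over the $(\Omega M)_+$‑factors, this is exactly the asserted isomorphism onto $\pi_{n(d-3)}F_{n+1}(M)$. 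I expect the genuine obstacle to be this matching step together with the verification of the ``misses $c(J_i)$'' condition on each $1$‑face: it is the isotopy‑calculus incarnation of Theorem~\ref{thm:J-collapsing}, it is where the extra data carried by a grasper beyond a bare clasper is actually used, and it is where the careful edge‑by‑edge bookkeeping lives.
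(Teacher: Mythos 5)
This theorem is not proved in the paper at all: it is quoted verbatim from \cite{K-families}, so there is no internal argument to compare with, and your sketch has to stand on its own. As it stands, it leaves the core of the statement unproved. The crux is your second step: you assert that the canonical nullhomotopies \eqref{eq:nullhtpy} underlying the iterated Samelson bracket ``assemble'' into an embedded cubical family $\S^{n(d-3)}\times I^{\ul{n}}\to\Embp(J_0,M_{0\ul{n+1}})$ that is constant at $\unknot|_{J_0}$ on $0$-faces and misses $\unknot(J_i)$ on the $1$-faces. Those nullhomotopies exist for loops in a wedge, where one can simply cut a path short; for embedded arcs there is no such canonical move, and producing isotopies of the whole arc off the punctures, coherently over all faces of the cube, is precisely the geometric content of \cite{K-families} -- it is exactly where the extra data of a grasper beyond a clasper is used, as the paper itself points out (``this data is used to show that grasper classes vanish in $T_n(M)$''). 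You explicitly flag this as ``the genuine obstacle'' but supply no argument, so what you have is a plausible roadmap through Theorems~\ref{thm:emb-comm}, \ref{thm:K-thesis-paper} and \ref{thm:Fn-thesis}, not a proof.

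Two further gaps. First, well-definedness: you deduce that $\realmap_n$ kills the $\AS$ and $\IHX$ relations from the corresponding identities for Samelson brackets, but the relations must hold in $\pi_{n(d-3)}\Embp(\D^1,M)$ itself, and the comparison maps to loop spaces of complements (or, when $d=3$, to $T_{n+1}(\D^1,M)$) are not known to be injective at this point, so the argument is circular; in \cite{K-families} the relations are realised by explicit isotopies of grasper surgeries. Second, the left square involves $ev_{n+1}$ into $T_{n+1}(\D^1,M)$, a homotopy limit over punctured-knot spaces, not the projection of the layer cube to its initial vertex $\Embp(J_0,M_{0\{n+1\}})$; even granting your cubical family, identifying its image in $T_{n+1}$ with $ev_{n+1}\circ\realmap_n$ is an additional step (and for $d=3$ one cannot appeal to Goodwillie--Klein convergence to shortcut it). Likewise, the claim that your family corresponds to the total Samelson class $w(x_i^M)^\bull$ under $\deriv\circ\chi$ and $J$ -- which is what makes $\ol{ev}_{n+1}$ the isomorphism of Theorem~\ref{thm:Fn-thesis} rather than some other lift -- is asserted rather than checked. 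The parts you do carry out are fine: the adjointness statement does follow essentially formally from Theorem~\ref{thm:emb-comm} and the definition of $\wt{w}(\TG_{g_{\ul{n}}})$, and once the left square is in place the vanishing $ev_n\circ\realmap_n=0$ is indeed immediate from exactness of the bottom row.
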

\begin{rem}
    The bottom sequence in this theorem is (not short) exact, being a part of the long exact sequence for the fibration $p_{n+1}$. If $d\geq4$ the upper sequence is exact as well, since $ev_{n+1}$ is an isomorphism in this degree, thanks to the mentioned convergence result of Goodwillie--Klein.
\end{rem}

The family $\wt{w}(m_i)$ of reimbeddings of the arc $a_0$ can be viewed as the \emph{universal iterated Whitehead bracket} of the shape $w$ and dimension $d$, whereas $\realmap_n(w,g_{\ul{n}})$ is an \emph{embedded version of the Whitehead bracket} according to $w^{g_{\ul{n}}}$ of meridian spheres for $\u$. 

\begin{proof}[Proof of Theorem~\ref{mainthm:B}]
    We define the bracket $[-,-]_{\ext}\colon F_{n_1+1}(M) \wedge F_{n_2+1}(M)\to F_{n_1+n_2+1}(M)$ by $f\wedge g\mapsto [\deriv\chi(f),\deriv\chi(g)]_{\ext}$ using the external bracket from Definition~\ref{def:external-Samelson} and 
    the map $\deriv\chi$ from Theorem~\ref{thm:K-thesis-paper}. 
    
    Furthermore, if we define $w_{F_{n+1}}\coloneqq\chi^{-1}\deriv^{-1} w(x^M)\colon \Omega^n w(\Sigma^{d-2}(\Omega M)_+)\to F_{n+1}(M)$, the first part of the theorem is clear.

    For the second part, by Remark~\ref{rem:grafting-Lie} this bracket in $F_{n+1}(M)$ induces in $\pi_*(\Embp(\D^1,M);c)$ the bracket of grasper classes $[\realmap_{n_1}(w_1, g_{\ul{n_1}}),\realmap_{n_2}(w_2,g_{\ul{n_2}})] =\realmap_n([w_1,w_2],g_{\ul{n}})$, as claimed.
\end{proof}


\printbibliography

\vspace{5pt}
\hrule

\end{document}